\documentclass[11pt,letterpaper]{amsart}
\usepackage{amsmath}
\usepackage{amssymb}
\usepackage{amsthm}
\usepackage{hyperref}
\usepackage[noabbrev,capitalize]{cleveref}
\usepackage{mathrsfs}
\usepackage{mathtools}
\usepackage{slashed}
\usepackage{tikz-cd}
\usepackage[shortlabels]{enumitem}

\setenumerate{label=(\roman*)}

\DeclareMathOperator{\Ric}{Ric}

\newcommand{\lp}{\langle}
\newcommand{\rp}{\rangle}

\newcommand{\mA}{\mathcal{A}}

\newcommand{\mH}{\mathcal{H}}

\def\sideremark#1{\ifvmode\leavevmode\fi\vadjust{\vbox to0pt{\vss
 \hbox to 0pt{\hskip\hsize\hskip1em
 \vbox{\hsize3cm\tiny\raggedright\pretolerance10000
 \noindent #1\hfill}\hss}\vbox to8pt{\vfil}\vss}}}

\newtheorem{theorem}{Theorem}[section]
\newtheorem{proposition}[theorem]{Proposition}

\newtheorem{corollary}[theorem]{Corollary}

\theoremstyle{definition}
\newtheorem{definition}[theorem]{Definition}

\theoremstyle{remark}
\newtheorem{remark}[theorem]{Remark}

\numberwithin{equation}{section}

\makeatletter
\@namedef{subjclassname@2020}{\textup{2020} Mathematics Subject Classification}
\makeatother

\begin{document}

\title[m-intermediate curvature]{intermediate curvature and splitting theorem}

\author{Jingche Chen}
\address{Department of Mathematical Sciences, Tsinghua University, 100084, Beijing, China}
\email{cjc23@mails.tsinghua.edu.cn}

\author{Han Hong}
\address{Department of Mathematics and statistics \\ Beijing Jiaotong University \\ Beijing \\ China, 100044}
\email{hanhong@bjtu.edu.cn}

\begin{abstract}
In this paper, we prove several rigidity results for complete noncompact manifolds with nonnegative intermediate curvatures. We show that when either $3\leq n\leq 5$, $1\leq m\leq n-1$, or $6\leq n\leq 7$, $m\in \{1,n-1,n-2\}$, any manifold of the topological type $M^{n-m}\times \mathbb{T}^{m-1}\times \mathbb{R}$  with nonnegative $m$-intermediate curvature is isometrically covered by the canonical product $M\times \mathbb{R}^m$. We also construct smooth metrics on $M^{n-m}\times \mathbb{T}^{m-1}\times \mathbb{R}$ with uniformly positive $m$-intermediate curvature for $6\leq n\leq 7$, $2\leq m\leq n-3$. This proves that the algebraic condition $m^2-mn+m+n>0$ from \cite{chenshuli_end} is sharp. The proof is based on a new recursion theorem for spectral intermediate curvatures and cylindrical splitting theorems. In particular, when $m=n-1$, this provides a new proof of some results by Chodosh--Li \cite{chodoshlisoapbubble} and Zhu \cite{zhu-splitting}. Moreover, the recursion theorem can be used to reprove the result of Brendle--Hirsch--Johne \cite{brendlegeroch'sconjecture}.
\end{abstract}

\maketitle

\section{introduction}
The relation between topology and curvature has long been a fascinating topic in differential geometry. The classical Gauss--Bonnet theorem implies that a non-spherical closed surface cannot admit positive Gaussian curvature, and if it admits nonnegative Gaussian curvature, it must be a flat torus. On the other hand, the Bonnet--Myers theorem states that an $n$-dimensional closed manifold with positive Ricci curvature must have finite fundamental group, and if the manifold is orientable, then $H_{n-1}$ with $\mathbb{Z}$-coefficient is trivial.

The well-known Geroch conjecture asks whether $\mathbb{T}^n$ admits a metric of positive scalar curvature. This conjecture was resolved by Schoen--Yau \cite{Schoen-Yau-PSC} for $3 \leq n \leq 7$ using minimal hypersurfaces, and by Gromov--Lawson \cite{Gromov-Lawson-ann} using spinors for all dimensions. Recent work by Chodosh--Li \cite{chodoshlisoapbubble} (see also Gromov \cite{gromov-apherical-conjecture}) proves that closed $n$-dimensional aspherical manifolds for $n = 4,5$ cannot admit positive scalar curvature. Brendle--Hirsch--Johne \cite{brendlegeroch'sconjecture} found curvature obstructions for manifolds of topological type $M^{n-m}\times \mathbb{T}^m$ and proved that such manifolds cannot admit positive $m$-intermediate curvature. Their result connects the Bonnet--Myers theorem or Bochner's formula to the Geroch conjecture. In most cases, a non-existence theorem is accompanied by a rigidity result. For example, by the work of Kazdan \cite{kazdan-deformation} and the Cheeger--Gromoll splitting theorem, $\mathbb{T}^n$ with nonnegative scalar curvature must be flat. Chu--Kwong--Lee \cite{ChuKwongLeerigidity} proved that $M^{n-m}\times \mathbb{T}^m$ for $3 \leq n \leq 5$ with nonnegative $m$-intermediate curvature is isometrically covered by $E \times \mathbb{R}^m$ with a closed manifold $E$ satisfying $\operatorname{Ric}_E \geq 0$. This was later extended by Xu \cite{XuTrans} to the case $n = 6$.

There have also been rich developments of nonexistence and rigidity results in noncompact settings. For instance, Chodosh--Li \cite{chodoshlisoapbubble} (see also \cite{lesourd-unger-yau} for $n=3$) proved the Geroch conjecture for manifolds with arbitrary ends and its rigidity; namely, they showed that the connected sum $\mathbb{T}^n\# X$ for $3\leq n\leq 7$ with nonnegative scalar curvature must be flat, where $X$ is an arbitrary manifold. For the same range of $n$, using a different idea, Zhu \cite{zhu-splitting} proved that a manifold of nonnegative scalar curvature admitting a smooth proper map to $\mathbb{T}^{n-1}\times \mathbb{R}$ with nonzero degree must be flat. For spin manifolds, Wang--Zhang \cite{Wang-Zhang-geroch} showed that for any $n$, $\mathbb{T}^n\# X$ cannot admit positive scalar curvature when $X$ is spin. To obtain a noncompact version of the result of Brendle--Hirsch--Johne, Chen \cite{chenshuli_end} proved
\begin{theorem}\cite{chenshuli_end}\label{chenshuli theorem}
Assume either $3 \le n \le 5$, $1 \le m \le n-1$ or $6 \le n \le 7$, $m \in \{1, n-2, n-1\}$. 
Let $N^n$ be a closed manifold of dimension $n$, and suppose that there exists a closed manifold $M^{n-m}$ and a map 
\[
F : N^n \to M^{n-m} \times T^m
\]
with non-zero degree. Then for any $n$-manifold $X$, the connected sum $N \# X$ does not admit a complete metric of positive $m$-intermediate curvature.
\end{theorem}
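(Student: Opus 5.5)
We argue by contradiction: suppose $g$ is a complete metric on $N\# X$ with positive $m$-intermediate curvature $\mathcal{C}_m>0$. The plan follows Chodosh--Li's treatment of $\mathbb{T}^n\# X$: pass to a cover that unwraps one torus direction, use a $\mu$-bubble to cut out a compact region, and then run the Brendle--Hirsch--Johne slicing argument on that region.

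\textbf{Topological reduction.} Compose $F$ with the projection to the last circle factor, giving $\theta:N\to S^1$. Take the $\mathbb{Z}$-cover $\hat N$ of $N$ pulled back by $\theta$. Correspondingly form a cover $\hat Y$ of $N\# X$ in which infinitely many copies of $X$ are attached along $\hat N$. After truncating the $X$-pieces, the lift $\hat F:\hat N\to M\times \mathbb{T}^{m-1}\times\mathbb{R}$ extends to a proper map of nonzero degree on a suitable region of $\hat Y$. The lifted metric is complete and still has $\mathcal{C}_m>0$. On compact sets we get a uniform bound $\mathcal{C}_m\ge \epsilon_0>0$, and this is all the later steps need.

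\textbf{$\mu$-bubble step.} Fix a fundamental domain. Choose two far-apart level sets of the lifted $\theta$ direction, separated by distance $L$. Between them take a warping function $h$ with $h\to\pm\infty$ on the two boundaries and $|h'|+h^2/(\dots)$ controlled, solving an ODE adapted to $\mathcal{C}_m$. The $X$-ends are disposed of by placing the bubble boundaries so that the ends lie outside the region, as in Chodosh--Li. The resulting stable $\mu$-bubble $\Sigma^{n-1}$ separates the two boundaries. Hence the restriction of $\hat F$ to $\Sigma$ has nonzero degree onto $M\times\mathbb{T}^{m-1}$. Stability then gives a spectral inequality: the operator
\[
-\Delta_\Sigma+\tfrac12\big(\mathcal{C}_m\text{-type term}\big)-\dots
\]
is positive, i.e. $\Sigma$ carries a positive \emph{spectral} $(m-1)$-intermediate curvature, with the $h$-terms absorbed for $L$ large.

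\textbf{Recursion and conclusion.} On $\Sigma$, iterate the Brendle--Hirsch--Johne slicing. At each step, minimize a weighted area (weighted by the first eigenfunctions of the previous steps) in the homology class dual to a circle factor. This yields the chain $\Sigma=\Sigma_1\supset\Sigma_2\supset\cdots\supset\Sigma_m$, ending in a closed manifold of dimension $n-m$ that maps with nonzero degree to $M$. The curvature hypothesis propagates through the chain as a spectral condition. At the last step, the weighted stability inequality combined with the algebraic Gauss-equation estimate gives a contradiction, since $\mathcal{C}_m>0$ forces a negative weighted integral of a square.

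\textbf{Main obstacle.} The hardest step is the algebraic inequality controlling the second fundamental form terms. It must hold in the spectral/warped setting, where the $\mu$-bubble contributes extra $h$ and $\nabla\log u$ terms. I expect this to be exactly where the dimension restriction enters, namely the condition $m^2-mn+m+n>0$, which holds precisely in the listed range of $(n,m)$. I would first try to choose the exponents of the weights so that a Cauchy--Schwarz absorption works uniformly for the whole chain.
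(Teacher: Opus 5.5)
Your route (unwrap one circle factor, cut out a compact $\mu$-bubble, then run the compact slicing) is the one the paper attributes to Chen. The paper itself only quotes this result, and notes that it also follows from its splitting Theorem~\ref{splitting of nonnegative C_m} applied to the cyclic cover. As written, however, your proposal stops at the one step that uses the hypothesis on $(n,m)$, and it misdescribes how that step works.

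\textbf{The missing inequality.} Your first slice is a $\mu$-bubble, so $H_\Sigma=h\neq 0$. The Brendle--Hirsch--Johne inequality requires $H_\Sigma=0$, so it no longer applies. Combining the $\mu$-bubble second variation with the Gauss equation, what you actually need is
\[
|A_\Sigma|^2+\sum_{i=2}^{m}\sum_{j=i+1}^{n}(h_{ii}h_{jj}-h_{ij}^2)\ \ge\ D(n,m)\,H_\Sigma^2 ,
\]
which is Chen's sharp lemma, quoted in the proof of Theorem~\ref{spectral inheritance}. With it, stability gives $\lambda_1(-\Delta_\Sigma+C^\Sigma_{m-1})\ge\inf_\Sigma\big(C^N_m+D(n,m)h^2-|\nabla h|\big)$. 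You must then choose $h$ with $|\nabla h|<C^N_m+D(n,m)h^2$, a tangent-type profile on a band of width roughly $\pi/\sqrt{D(n,m)\epsilon_0}$.

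The $h$-terms are not ``absorbed for $L$ large''. However large $L$ is, $h$ must blow up at both barriers, and only a positive coefficient in front of $h^2$ can dominate $|\nabla h|$ there. The best constant in the displayed inequality is positive exactly when $m^2-mn+m+n>0$. For instance, for $n=7$, $m=2$ the left side is negative for suitable diagonal $A$ with $H\neq0$. This is where the restriction on $(n,m)$ enters, and you neither state nor prove it.

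By contrast, the $\nabla\log u$ terms further down the chain, which you flag as the main worry, cause no new difficulty. Once $\Sigma$ is closed, carries positive spectral $(1,m-1)$-intermediate curvature, and maps with nonzero degree to $M\times\mathbb{T}^{m-1}$, the rest is the compact problem. It is handled by iterating Theorem~\ref{spectral inheritance} as in the proof of Theorem~\ref{a new proof of brendle's result}.

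\textbf{The copies of $X$.} Your treatment of these is not correct as stated. A copy of $X$ is attached in every fundamental domain, so any band between two far-apart level sets contains many noncompact copies; they cannot be placed outside the region. Suppose instead you make the band compact by letting its boundary cut through the copies. Then $\nabla h\neq0$ on pieces of $X$, and your $\epsilon_0$ (the minimum of $C_m$ over a compact set containing the band) depends on $L$. Since the width must exceed about $\pi/\sqrt{D(n,m)\epsilon_0}$, the argument is circular: $\epsilon_0(L)$ may decay faster than $L$ grows.

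What is needed is a height function adapted to the decomposition:
\begin{itemize}
\item it is deck-equivariant on the lift of $N$ minus the connected-sum ball, where the lifted metric is periodic, so $C_m\ge\epsilon_0$ with $\epsilon_0$ independent of $L$;
\item its gradient on each $X$-copy is so small that $|\nabla h|<D(n,m)h^2$ there, with no copy attached near the zero of $h$, so that pointwise positivity of $C_m$ suffices on the $X$-copies;
\item you check that the $\mu$-bubble in the resulting compact band still maps with nonzero degree to $M\times\mathbb{T}^{m-1}$.
\end{itemize}

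These two points, above all the inequality with $D(n,m)>0$, are the actual content of Chen's argument. As it stands, your proposal is an outline of that argument rather than a proof.
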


To better explain this result, we recall the definition of \textit{$m$-intermediate curvature} defined by Brendle-Hirsch-Johne \cite{brendlegeroch'sconjecture}.

\begin{definition}

    Suppose $(N,g)$ is an $n$-dimensional manifold. For given orthonormal vectors $\{e_{1},...,e_{m}\}$ in $T_pN$ at the point $p\in N$, one can extend them to an orthonormal basis $\{e_{1},...,e_{n}\}$ of $T_{p}N$. 
    The $m$-intermediate curvature $C_{m}$ of the orthornormal vectors $\{e_{1},...,e_{m}\}$ is defined by
    \[
    C^N_{m}(e_{1},...,e_{m}) = \sum_{p=1}^{m}\sum_{q=p+1}^{n}R^N(e_{p},e_{q},e_{p},e_{q}).\]
In particular, $C^N_1(e_1)=\Ric_N(e_1,e_1)$ and $2C^N_{n-1}(e_1,\cdots,e_{n-1})=R_N$ where $R_N$ is the scalar curvature of $N$. Let 
\[C^N_m(p):=\min\{C^N_m(e_1,\cdots,e_m)| \{e_1,\cdots,e_m\}\ \text{are orthonormal in}\ T_pN\}.\]
We say $(N,g)$ has nonnegative (positive) $m$-intermediate curvature if $C^N_m(p)\geq 0\ (>0)$ for all $p\in N.$ When $m=1$, we also use $\Ric_N:=\min\{\Ric_N(v,v):v\in T_pN\}$ to denote $C_1^N.$
\end{definition}

We now explain the assumptions in Theorem \ref{chenshuli theorem}. The condition $n \leq 7$ is due to the regularity of minimal hypersurfaces (also due to some counterexamples of Xu \cite{XuTrans}), and we will always assume this throughout the paper unless otherwise stated. The range of $m$ and $n$ can be rephrased into two algebraic inequalities:
\[
m^2 - mn + 2n - 2 > 0 \quad \text{and} \quad m^2 - mn + m + n > 0,
\]
where the first inequality first appeared in \cite{brendlegeroch'sconjecture} and it is always satisfied by $3\leq n\leq 7$ and $m\leq n-1$. It is the second inequality that causes the incompleteness of the admissible pairs $(n,m)$ in Theorem \ref{chenshuli theorem}. In fact, Chen \cite{chenshuli_end} considered a cyclic cover $\tilde{N}$ of $N$ by opening a $\mathbb{S}^1$-factor and used the $\mu$-bubble to reduce the problem in the noncompact setting to the compact setting, which requires the second inequality. This same inequality also appears in the rigidity result of Chu--Kwong--Lee \cite{ChuKwongLeerigidity} when they employed the $\mu$-bubble. 
Note that a feature of $\tilde{N}$ is that it admits a smooth proper map to $M^{n-m}\times\mathbb{T}^{m-1}\times\mathbb{R}$ with nonzero degree. Thus Chen actually proved that such $\tilde{N}$ can not admit positive $m$-intermediate curvature in the given range of $m,n$. It remains an open question whether the condition $m^2-mn+m+n>0$ can be dropped so that nonexistence result holds for all $3 \leq n \leq 7$ and $1 \leq m \leq n-1$ (see Chen \cite{chenshuli_end}).

The goal of this paper is twofold. First, we establish a rigidity result. Second, we answer this question in the negative. The main results are as follows.

\begin{theorem}\label{splitting of nonnegative C_m: main in the introduction}
Assume either \(3 \leq  n \leq  5\), \(1 \leq  m \leq  n-1\) or \(6 \leq  n \leq  7\), \(m\in \{1,n-2,n-1\}\). Let \((N^n,g)\) be an orientable complete noncompact Riemannian manifold with nonnegative $m$-intermediate curvature, which admits a smooth proper map \(f : N \to M^{n-m}\times \mathbb{T}^{m-1} \times \mathbb{R}\) with nonzero degree. Then \((N,g)\) is isometric to the Riemannian product of a closed manifold $E$, flat $(m-1)$-torus and the real line where $E$ has nonnegative Ricci curvature.
\end{theorem}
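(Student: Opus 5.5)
The plan has two stages. First, build an area-minimizing hypersurface that "sees" the $\mathbb{R}$-factor and reduce the problem to the compact rigidity of Chu--Kwong--Lee and Xu. Second, propagate that splitting along a foliation to obtain a line, then apply a cylindrical splitting argument.

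Let $\theta:N\to\mathbb{R}$ be the last component of $f$. By properness, the level set $\Sigma_t=f^{-1}(M\times\mathbb{T}^{m-1}\times\{t\})$ is compact for a regular value $t$. It represents a nontrivial class, since $f|_{\Sigma_t}:\Sigma_t\to M\times\mathbb{T}^{m-1}$ has the same nonzero degree. The sets $\{\theta<t\}$ and $\{\theta>t\}$ separate the two ends determined by $\mathbb{R}$.

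First I would show that $N$ has no positive-$C_m$ region "far out", and then produce a minimizer. Suppose there is no compact area-minimizer in the homology class of $\Sigma_t$. Then one can use a $\mu$-bubble in a slab $\{a<\theta<b\}$, with weight built from a distance-like function. The paper's recursion theorem for \emph{spectral} intermediate curvature would then yield a closed hypersurface $\Sigma$ of nonzero degree onto $M\times\mathbb{T}^{m-1}$ carrying a spectral version of $C_{m-1}\ge 0$. Iterating the recursion $m-1$ times reduces to a closed $(n-m+1)$-manifold mapping with nonzero degree onto $M\times S^1$, with a spectral Ricci-type bound. This is where the algebraic conditions $m^2-mn+2n-2>0$ and $m^2-mn+m+n>0$ enter, since they allow the spectral constants to be absorbed. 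This gives the noncompact analogue of \cref{chenshuli theorem} with the inequalities made non-strict.

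For rigidity, I would argue instead by a minimizing sequence. Minimize area among hypersurfaces homologous to $\Sigma_t$ inside $\{-L\le\theta\le L\}$ and let $L\to\infty$. If the minimizers drift off to infinity, the nonnegativity of $C_m$ together with a $\mu$-bubble barrier (as in Chodosh--Li and Zhu) should produce a contradiction, or else a minimizer in the limit. So we obtain a compact, two-sided, area-minimizing $\Sigma^{n-1}$ of nonzero degree. Stability, the second variation, and the Brendle--Hirsch--Johne slicing along the $\mathbb{T}^{m-1}$ directions give that $\Sigma$ is totally geodesic. They also give that $C_m(\nu,e_2,\dots)$ vanishes appropriately and that $\Sigma$ itself has $C_{m-1}\ge0$. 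By Chu--Kwong--Lee ($n\le5$) and Xu ($n=6,7$, in the admissible $m$), $\Sigma$ is then covered by $E\times\mathbb{R}^{m-1}$ and splits as $E\times\mathbb{T}^{m-1}$ up to a finite cover.

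Next, the standard foliation argument (Bray--Brendle--Neves style) applies. Using the inverse function theorem on the lapse-function equation, construct constant mean curvature leaves $\Sigma_s$ near $\Sigma$. Show $H(s)$ has a sign forcing each leaf to be area-minimizing as well, hence totally geodesic with $\partial_s$ parallel. This gives a local isometric splitting $\Sigma\times(-\varepsilon,\varepsilon)$. An open-closed argument extends it to all of $N$, so $N\cong\Sigma\times\mathbb{R}$. Combining with the structure of $\Sigma$ yields $E\times\mathbb{T}^{m-1}\times\mathbb{R}$, where $E$ has $\Ric\ge0$ because $C_m\ge0$ on the product.

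The main obstacle is the existence of the minimizer in the noncompact setting, i.e. preventing the minimizers from escaping to infinity. I would first try to derive a contradiction from escape by producing, via the spectral recursion theorem, a $\mu$-bubble with strictly positive spectral curvature on a closed manifold of nonzero degree. If that fails, the fallback is to work directly with warped $\mu$-bubbles, whose rigidity case (equality in the spectral inequality) already forces a local product, avoiding the need for an honest minimizer.
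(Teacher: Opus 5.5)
Your plan has a genuine gap, and it sits at the step everything rests on.

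\textbf{The compact minimizer is never produced.} You need a compact, two-sided, area-minimizing hypersurface in the class of the cross-section $f^{-1}(M\times\mathbb{T}^{m-1}\times\{t\})$, and this is exactly where the difficulty of the theorem lies. Minimizers in slabs $\{-L\le\theta\le L\}$ can escape into an end, and your proposed remedies do not work when only $C_m\ge 0$ is assumed:
\begin{enumerate}
\item A $\mu$-bubble needs strictly positive (spectral) curvature to absorb the $\mu$-terms. Chen's argument, which your first stage reproduces, only excludes $C_m>0$ everywhere.
\item Chodosh--Li and Zhu pass from nonnegative to positive scalar curvature through Kazdan's deformation theorem, explicitly or implicitly. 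No analogue of that theorem exists for $C_m$.
\end{enumerate}
Escape is a real phenomenon, not a technicality. For the metrics of Theorem~\ref{counterexample of splitting}, the area of the cross-sections $\{r=\mathrm{const}\}$ is proportional to $f^{n-m}u^{(2m-2)/m}$. By the ODE system its logarithmic derivative is $h$, so the area tends to $0$ at both ends (like $\cosh(\sqrt{C_4\lambda}\,r)^{-1/C_4}$ when $C_4>0$), and that class has no minimizer. Hence any argument that traps the minimizer must use $m^2-mn+m+n>0$ in an essential, rigidity-type way. Your proposal contains no such mechanism, and the warped $\mu$-bubble fallback is too vague to supply one.

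\textbf{The later steps also fail as stated.} Even granting such a $\Sigma$:
\begin{enumerate}
\item Stability (Theorem~\ref{spectral inheritance} with $k=0$) gives only $\lambda_1(-\Delta_\Sigma+C^\Sigma_{m-1})\ge 0$, not $C^\Sigma_{m-1}\ge 0$ pointwise. So Chu--Kwong--Lee and Xu cannot be applied to $\Sigma$.
\item In the foliation step there is no Gauss--Bonnet in higher dimensions. The sign of $H'(s)$ must therefore come from testing with the first eigenfunction of $-\Delta_{\Sigma_s}+C^{\Sigma_s}_{m-1}$. The equality case then gives totally geodesic leaves whose lapse $\rho$ is proportional to that eigenfunction, i.e.\ a metric $g_\Sigma+\rho^2ds^2$. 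This is exactly the warped form of Proposition~\ref{lift C_m-1 to C_m} (with $k=0$) used to build non-split examples. Showing $\partial_s$ is parallel needs an extra spectral rigidity statement that you do not provide.
\end{enumerate}

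\textbf{How the paper avoids both problems.} It slices the other way: each slice keeps the $\mathbb{R}$-factor. Each slice is a limit of Plateau-type $\mathcal{A}_k(u)$-minimizers with boundary on the level sets at $\pm h$. These cannot escape, because each must meet the separating level set at $0$. Iterating Theorem~\ref{spectral inheritance} leads to a two-ended $(n-m+1)$-manifold with $\lambda_1(-\frac{2m-2}{m}\Delta+\Ric)\ge 0$. Theorem~\ref{splitting of nonnegative spectral ric} splits it precisely because
\[
\tfrac{2m-2}{m}<\tfrac{4}{n-m}\iff m^2-mn+m+n>0 .
\]
The splitting is then carried upward by the cylindrical splitting theorem: perturb $u$, show minimizing cylinders pass through every point, deduce $\Ric\ge 0$, and finish with Cheeger--Gromoll.
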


 Note that when $m = 1$, Theorem \ref{splitting of nonnegative C_m: main in the introduction} follows directly from the classical Cheeger--Gromoll splitting theorem (in this case, there is even no restriction on $n$). Theorem \ref{splitting of nonnegative C_m: main in the introduction} recovers the results of Chodosh--Li \cite{chodoshlisoapbubble} and Zhu \cite{zhu-splitting} for the case $m = n-1$.
 
 \begin{theorem}\label{counterexample of splitting: main in the introduction}
    When $6\leq n\leq 7$ and $2\leq m\leq n-3$, there exists a complete smooth metric on $\mathbb{S}^{n-m}\times T^{m-1}\times \mathbb{R}$ with uniformly positive $m$-intermediate curvature.
\end{theorem}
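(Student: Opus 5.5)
The plan is to write down an explicit doubly warped product on $\mathbb{S}^{k}\times T^{m-1}\times\mathbb{R}$, with $k=n-m\ge 3$, and check the $m$-intermediate curvature by hand. I take
\[
g=dt^2+e^{2a h(t)}\,g_{T^{m-1}}+\varepsilon^2 e^{-2b h(t)}\,g_{\mathbb{S}^{k}},
\]
with $g_{T^{m-1}}$ flat, constants $a,b>0$, $\varepsilon>0$ small, and $h$ a smooth convex function with $h''\ge c>0$. The model choice is $h(t)=t^2/2$. Completeness is clear, since the $t$-lines are unit speed geodesics and the cross sections are compact. Set $\varphi=e^{ah}$ and $\psi=\varepsilon e^{-bh}$. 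In the orthonormal frame $\{\partial_t,\ \text{torus},\ \text{sphere}\}$ the curvature operator is diagonal on coordinate 2-planes, with the following sectional curvatures:
\begin{itemize}
\item $t$--$T$: $-\varphi''/\varphi=-a^2h'^2-ah''$;
\item $T$--$T$: $-a^2h'^2$;
\item $t$--$S$: $-\psi''/\psi=-b^2h'^2+bh''$;
\item $T$--$S$: $-\varphi'\psi'/(\varphi\psi)=ab\,h'^2$;
\item $S$--$S$: $(1-\psi'^2)/\psi^2=\varepsilon^{-2}e^{2bh}-b^2h'^2$.
\end{itemize}

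Next I evaluate $C_m$ on the worst candidate, namely $\{e_1,\dots,e_m\}=\{\partial_t,\text{the }m-1\text{ torus directions}\}$. This set contains no $S$--$S$ pairs, so it is the only place where positivity is not automatic. Summing the contributions gives
\[
C_m=h'^2\Big(-\tfrac{m(m-1)}{2}a^2-kb^2+(m-1)k\,ab\Big)+h''\big(kb-(m-1)a\big).
\]
Choose $b=\tfrac{m-1}{2}a$. Then the $h'^2$-coefficient equals $\tfrac{(m-1)a^2}{4}\big((m-1)k-2m\big)$, and a short computation shows $(m-1)k-2m=-(m^2-mn+m+n)$. This is $\ge 0$ exactly in the range $n\in\{6,7\}$, $2\le m\le n-3$. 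It is strictly positive for $n=7$ and vanishes for $n=6$, which is the borderline case.

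The $h''$-coefficient is $(m-1)a(\tfrac k2-1)>0$ because $k\ge 3$. So on this frame $C_m\ge c\,(m-1)a(\tfrac k2-1)>0$ uniformly. This is why I use a convex $h$ with unbounded $h'$ rather than the naive exponential ends $h=|t|$: with $h=|t|$ the borderline case $n=6$ would only give $C_m\ge 0$.

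The remaining and main step is to get a uniform lower bound over all orthonormal $m$-frames, not just coordinate ones. I write $C_m(e_1,\dots,e_m)=\sum_{p\le m}\Ric(e_p,e_p)-\sum_{p<q\le m}\sec(e_p,e_q)$, which depends only on the $m$-plane $P$. I express it as a quadratic expression in $P$ using the diagonal curvature operator. Decompose $P$ relative to $V=\mathrm{span}\{\partial_t,T\}$ and the sphere directions $S$. Every unit vector with sphere component of size $s$ picks up roughly $s^2(k-1)\varepsilon^{-2}e^{2bh}$ from the $S$--$S$ terms. All other terms are bounded by $C(1+h'^2)$. Since $e^{2bh}\gg 1+h'^2$ uniformly when $h=t^2/2$ and $\varepsilon$ is small, any plane with a non-negligible sphere component has large positive $C_m$.

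For planes that are close to $V$, I would argue by a perturbation estimate. At exactly $P=V$ the value is $\ge\delta>0$. The first-order change in $s$ must be controlled, and the danger is cross terms of size $s\,h'^2$ against the gain $s^2\varepsilon^{-2}e^{2bh}$. By AM--GM these cross terms are absorbed once $\varepsilon^{-2}e^{2bh}\gg h'^4$, which again holds uniformly. I expect this uniform Grassmannian estimate, especially the bookkeeping when $h'$ is large and when $P$ contains $S$-directions but omits $\partial_t$ or some torus directions, to be the real obstacle. If absorbing the $h'$ growth turns out to be delicate, the fallback is to cap $h'$: take $h$ convex with $h''\ge c$ on a large compact set and the exponential regime outside, choosing $b$ slightly off the optimal ratio for $n=7$. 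For $n=6$ the borderline case would still need the growing-$h'$ mechanism.
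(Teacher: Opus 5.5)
Your proposal is correct, and the metric you arrive at is the paper's metric. With $h=t^2/2$ and $b=\frac{m-1}{2}a$ it is exactly the paper's $dr^2+\epsilon^2f^2\bar g+u^{4/m}\sum_i dx_i^2$ in the borderline case $n=6$. There $u^{4/m}=e^{ar^2}$ and $f^2=e^{-br^2}$, with $a=\frac{2\lambda}{(m-1)(n-m-2)}$ and $b=\frac{\lambda}{n-m-2}$, and $\lambda=(m-1)a(\frac{k}{2}-1)$ is precisely your $h''$-coefficient. For $n=7$ the paper takes $h\propto\log\cosh$ with the same ratio $b/a$, which is your capped-$h'$ fallback.

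The difference is in how the value on the distinguished frame is obtained. The paper reverse-engineers it. It starts from Xu's solution of $-\frac{2m-2}{m}\Delta u+C_1(\partial_r)u=\lambda u$ on $\mathbb{S}^{n-m}\times\mathbb{R}$ and applies the lifting Proposition $m-1$ times. So $C_m(\partial_r,e_{x_1},\dots,e_{x_{m-1}})\ge\lambda$ comes out of the spectral recursion, and the condition $m^2-mn+m+n\le 0$ appears as $\frac{2m-2}{m}\ge\frac{4}{n-m}$, i.e.\ the failure of the spectral splitting threshold on the bottom slice. This is what ties the example to the sharpness of the main splitting theorem.

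You instead compute the sectional curvatures directly and optimize $b/a$. This is more elementary and makes the identity $(m-1)(n-m)-2m=-(m^2-mn+m+n)$ transparent. It also shows that the single profile $h=t^2/2$ handles $n=6$ and $n=7$ at once. The minimization over all frames is the same idea in both arguments: split into sphere and $\mathbb{R}\times T^{m-1}$ components and let $\varepsilon^{-2}e^{2bh}$ dominate.

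The step you single out as the real obstacle closes more easily than your perturbation scheme suggests. Write $R_g$ for the scalar curvature and $\mathcal{R}$ for the curvature operator. Then $C_m(P)=\frac12 R_g-\operatorname{tr}(\mathcal{R}|_{\Lambda^2P^{\perp}})$, and $\mathcal{R}$ is diagonal in $\{e_a\wedge e_b\}$ with the eigenvalues you listed. Ky Fan's inequality therefore gives $\operatorname{tr}(\mathcal{R}|_{\Lambda^2Q})\le\binom{k}{2}K_{SS}=\operatorname{tr}(\mathcal{R}|_{\Lambda^2S})$ for every $k$-plane $Q$, as soon as $K_{SS}=\varepsilon^{-2}e^{2bh}-b^2h'^2$ is the top eigenvalue. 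For $h=t^2/2$ this holds for all $t$ once $\varepsilon^2\le(a+b)^{-1}$, using $e^{bt^2}\ge 1+bt^2$.

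Hence $\min_P C_m(P)=C_m(V)$ exactly. There are no first-order cross terms, since $V$ is a critical point by the block structure. Nor is any case analysis needed for planes omitting $\partial_t$ or torus directions.

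Your heuristic gain $s^2(k-1)\varepsilon^{-2}e^{2bh}$ also forgets the $S$--$S$ sectional terms subtracted inside $P$. With $s^2=\sum_p|e_p^S|^2$, the correct net gain is $K_{SS}\big(\binom{k}{2}-\operatorname{tr}(\Pi_{\Lambda^2P^\perp}\Pi_{\Lambda^2S})\big)\ge\frac{k-1}{2}s^2K_{SS}$. This is still positive, so your AM--GM route would also have gone through, just less cleanly.
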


When $m=2$, Theorem \ref{counterexample of splitting: main in the introduction} recovers the result of Xu \cite{XuTrans}. Above two theorems are not independent. Indeed, the proof of the first theorem inspires the construction of examples in the second theorem. 

\subsection{Idea of proof}
 We first prove a recursion theorem for spectral $m$-intermediate curvatures. Roughly speaking, it states that if the ambient manifold has nonnegative spectral $m$-intermediate curvature, then a certain weighted minimizer admits nonnegative spectral $(m-1)$-curvature (see Theorem \ref{spectral inheritance}). The motivation for this result is as follows. The proof of the Geroch conjecture \cite{Schoen-Yau-PSC} makes use of the fact that an area-minimizer in a manifold of positive scalar curvature itself admits pointwisely positive scalar curvature. On the other hand, the proof of the generalized version \cite{brendlegeroch'sconjecture} introduces stable weighted minimal compact slicings and sums all stability inequalities of slicings on the bottom slice to obtain
\[
0 \geq \int_{\text{bottom slice}} C_m + \mathcal{V}_1 + \mathcal{V}_2 + \cdots
\]
where $\mathcal{V}_i$ are nonnegative expressions involving the second fundamental forms of $\Sigma_i$. One may expect a similar inheritance property of $m$-intermediate curvature as in the proof of Geroch conjecture.  With the recursion theorem, we can reprove the result in \cite{brendlegeroch'sconjecture} (see Theorem \ref{a new proof of brendle's result}). In particular, this gives a spectral-sense proof for Georoch conjecture, since on the last slice (closed curve), there is no positive strictly concave function.

The splitting theorem of noncompact manifold is often difficult because  the the minimizer could drift off. The useful technique is to apply $\mu$-bubble and restrain the minimizer in a bounded region. 
We shall use a different method in this paper. The key idea of the proofs of the main results is illustrated in Figure \ref{picture of proof}. Using the special topological structure of $M$, we construct a chain of (noncompact) cylindrical manifolds
\[
\Sigma_{m-1} \subset \cdots \subset \Sigma_{\ell} \subset \Sigma_0 = N
\]
where each slice admits nonnegative spectral $\ell$-intermediate curvature. For the last slice $\Sigma_{m-1}$, the manifold has at least two ends and satisfies
\[
-\frac{2m-2}{m} \Delta_{\Sigma_{m-1}} u_{m-1} + C_{1}^{\Sigma_{m-1}} \cdot u_{m-1} = 0.
\]
Using a spectral splitting result by Antonelli--Xu and Catino--Mari--Mastrolia--Roncoroni (see Theorem \ref{splitting of nonnegative spectral ric}), one deduces that $\Sigma_{m-1}$ splits if
\[
\frac{2m-2}{m} < \frac{4}{n-m}.
\]
This inequality is exactly the algebraic inequality $m^2 - mn + m + n > 0$.

 \begin{figure}[htbp]\label{picture of proof}
\centering
\[
\begin{array}{c|c}
\hline
\multicolumn{2}{c}{\text{Slicings and spectral curvatures}\ (0\leq \ell<m<n\leq 7)} \\ \hline 
N^n \approx M^{n-m} \times \mathbb{T}^{m-1} \times \mathbb{R} & C_m^N \geq 0 \\
\Downarrow & \downarrow \\
\Sigma_{1}^{n-1} \approx M^{n-m} \times \mathbb{T}^{m-2} \times \mathbb{R} & -\Delta_{\Sigma_1} u_1 + C_{m-1}^{\Sigma_1} \cdot u_1 = 0 \\
\vdots & \vdots \\
\Sigma_{\ell}^{n-\ell} \approx M^{n-m} \times \mathbb{T}^{m-\ell-1} \times \mathbb{R} & -\frac{2\ell}{\ell+1} \Delta_{\Sigma_\ell} u_\ell + C_{m-\ell}^{\Sigma_\ell} \cdot u_\ell = 0 \\
\vdots & \vdots \\
\Sigma_{m-2}^{n-m+2} \approx M^{n-m} \times \mathbb{S}^1 \times \mathbb{R} & -\frac{2m-4}{m-1} \Delta _{\Sigma_{m-2}} u_{m-2} + C_{2}^{\Sigma_{m-2}} \cdot u_{m-2} = 0 \\
\Downarrow & \downarrow \\
\Sigma_{m-1}^{n-m+1} \approx M^{n-m} \times \mathbb{R} & -\frac{2m-2}{m} \Delta_{\Sigma_{m-1}} u_{m-1} + C_{1}^{\Sigma_{m-1}} \cdot u_{m-1} = 0\\ [7pt]
\hline
\end{array}
\]
\caption{$\Downarrow$  means minimization with respect to $\mathcal{A}_{2\ell/(\ell+1)}(u)$; $\approx$ means the existence of a nonzero degree smooth proper map from left to right; $u_i$ are all positive functions.}

\end{figure}
 
 From here, the proof of the rigidity result and the construction of counterexamples is clear. We use the interesting concept of ``reverse engineering'' introduced by Xu \cite{XuTrans}. For rigidity, we prove a spectral cylindrical splitting theorem (see Theorem \ref{spectral splitting theorem of C_2}). This resembles a higher-dimensional version of the splitting theorem proved by Chodosh--Eichmair--Moraru \cite{chodoshsplittingtheorem}, in which they assume the existence of an absolutely area-minimizing cylinder. However, due to the topological structure, we do not need to prove a direct higher-dimensional generalization of their result; rather, we focus on the existence of a minimizing cylinder and the non-negativity of the ambient Ricci curvature. Nevertheless, it would be interesting to obtain such a result in full generality. By applying the spectral splitting theorem and Cheeger-Gromoll splitting theorem iteratively, we can split the top manifold $N$ completely. On the other hand, the construction of a non-splitting metric on $N$ with nonnegative $m$-intermediate curvature begins with non-splitting examples on the bottom slice $\Sigma_{m-1}$ when $\Sigma_{m-1}$ itself does not split. Luckily, Xu \cite{XuTrans} has provided such examples.

 In subsection \ref{section4}, we also give an application of recursion theorem on the diameter estimate of objects in $k$-th  homology class.

\subsection{A comparison between various proofs of splitting theorems}
As mentioned before, Chodosh--Li \cite{chodoshlisoapbubble} and Zhu \cite{zhu-splitting} proved splitting theorems on open manifolds with nonnegative scalar curvature. One may wonder whether their ideas can be directly generalized to $m$-intermediate curvatures. In \cite{chodoshlisoapbubble}, they rely on a theorem of Kazdan \cite{kazdan-deformation}, the $\mu$-bubble method, and the Cheeger--Gromoll splitting theorem. In contrast, Zhu \cite{zhu-splitting} avoids the Cheeger--Gromoll splitting theorem but instead uses compact foliations as in \cite{BrayBrenleNevesrigidity}. The key step in his approach is to prove the compactness of the minimal limit of a sequence of compact $\mu$-bubbles. This step still implicitly uses Kazdan's theorem or the theory from conformal geometry. However, such techniques seem unlikely to extend to the setting of intermediate curvatures. This forces us to develop a new idea that works both for scalar curvature and intermediate curvatures.

\vskip.1cm

\subsection{Final remark}
All the results presented in this article use nonnegative pointwise $m$-intermediate curvature on the manifold $N\approx M\times \mathbb{T}^{n-1}\times \mathbb{R}$. One can also define nonnegative spectral $m$-intermediate curvature on $N$ and prove that for $3\leq n\leq 7$ and $m\in \{1, n-2, n-1\}$, such a topological manifold $N$ must split if it admits nonnegative spectral $(\gamma,m)$-intermediate curvature for $0<\gamma<C(n,m)$, where $C(n,m)$ is a constant depending on $n$ and $m$. In particular, $C(n,n-1)=\frac{2n}{n-1}$. This special case was recently obtained by Chai and Sun \cite{chai-sun-spectralscalarsplitting} by generalizing arguments of Chodosh–Li and Zhu to the spectral setting.

\vskip.2cm
The structure of the paper is as follows. In Section \ref{section2}, we introduce spectral $m$-intermediate curvature and prove the crucial recursion theorem (Thereom \ref{spectral inheritance}), we use it to reprove the result of Brendle-Hirsch-Johne. In section \ref{section3}, we prove a lower dimensional cylindrical splitting theorem, and then use it to prove our first main theorem (Theorem \ref{splitting of nonnegative C_m: main in the introduction}).  In section \ref{section4}, we first construct an example that proves our second main theorem (Theorem \ref{counterexample of splitting: main in the introduction}). Then we study the $k$-diameter of closed manifolds.

\subsection{Acknowledgements}
 The first author would like to thank his supervisor Prof Haizhong Li for his encouragement and support. The second author wants to thank Professors Gaoming Wang and Zhifei Zhu for discussions on the splitting theorem. The authors also appreicate Professor Man-Chun Lee for discussions. The second author is supported by NSFC No. 12401058 and the Talent
Fund of Beijing Jiaotong University No. 2024XKRC008.

\section{Reduction of spectral $m$-intermediate curvatures}\label{section2}

\begin{definition}
We say that $N$ has nonnegative (respectively, positive; uniformly positive) spectral $(k,m)$-intermediate curvature if there exist a positive function $u \in C^{2,\beta}(N)$ for some $\beta \in (0,1)$ and a constant $k \geq 0$ such that
\[
 -k \Delta_N u + C^N_m\, u \geq 0 
 \quad (\text{respectively, } > 0;\ \geq \lambda u \text{ for some } \lambda > 0).
\]
In particular, when $k = 0$, this reduces to pointwise nonnegative (respectively, positive; uniformly positive) $m$-intermediate curvature.
\end{definition}

Note that whenever we say $N$ ha nonnegative spectral $(k,m)$-intermediate curvature, there is a corresponding positive function $u$ on $N$.

Two particular cases are previously well studied.
\begin{definition}
    We say that $N$ has nonnegative (respectively, positive; uniformly positive) spectral $k$-scalar curvature if there exist a positive function $u \in C^{2,\beta}(N)$ for some $\beta \in (0,1)$ and a constant $k \geq 0$ such that
    \[-k \Delta_N u+R_N u \geq 0 \quad (\text{respectively, } > 0; \ \geq \lambda u \text{ for some } \lambda > 0).\]
\end{definition}

\begin{definition}
    We say  that $N$ has nonnegative (respectively, positive; uniformly positive) spectral $k$-Ricci curvature if there exist a positive function $u \in C^{2,\beta}(N)$ for some $\beta \in (0,1)$ and a constant $k \geq 0$ such that
    \[-k \Delta_N u+\Ric_N u \geq 0 \quad (\text{respectively, } > 0;\ \geq \lambda u \text{ for some } \lambda > 0).\]
\end{definition}

For the spectral $k$-Ricci curvature, there are two important results.
\begin{theorem}\cite{shenyingyerugang,antonelli-xu}\label{Diameter estimate of Shen-ye}
Let $N^n$ be $n$-dimensional complete manifold, and let $0\leq \gamma<\frac{4}{n-1}$ when $n>3$, or $0\leq \gamma\leq 2$ when $n=3$, $\lambda>0$. Assume there exists a positive function $u$ such that 
\[\gamma\Delta_N u\leq u\Ric_N-(n-1)\lambda u.\]
Then
\[\operatorname{diam}(N)\leq \sqrt{n-1+\frac{(n-3)^2}{\frac{4}{\gamma}-n+1}}\frac{\pi}{\sqrt{(n-1)\lambda}}.\]
Moreover, the inequality is strict unless $n=3.$
\end{theorem}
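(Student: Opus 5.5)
The plan is the standard Bonnet--Myers argument, run through a conformally weighted length functional in the style of Shen--Ye and Antonelli--Xu. Fix $p,q\in N$ and suppose, for contradiction, that $d(p,q)$ exceeds the claimed bound. Choose an exponent $\alpha>0$, to be fixed later, and minimize the weighted length
\[
L_\alpha(\sigma)=\int_\sigma u^{\alpha}\,ds
\]
among curves joining $p$ to $q$. Completeness of $N$, together with local uniform bounds on $u$ and $u^{-1}$, gives a minimizing geodesic $\sigma$ for the conformal metric $u^{2\alpha}g$. Let $s$ be the $g$-arclength along $\sigma$ and $\ell$ its $g$-length. The $g$-length is at least $d(p,q)$, because $d(p,q)$ is the infimum of $g$-lengths.

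The next step is the second variation. Take variations $\phi(s)e_i$, where $e_i$, $i=1,\dots,n-1$, are parallel orthonormal normal fields and $\phi$ vanishes at the endpoints. Summing over $i$ and writing $f=\log u$ gives the stability inequality
\[
0\le\int_0^\ell u^{\alpha}\Big[(n-1)\phi'^2-\phi^2\big(\Ric(\sigma',\sigma')-\alpha\,(\Delta f-\nabla^2 f(\sigma',\sigma'))+\text{first-order terms}\big)\Big]ds .
\]
The first-order terms come from the normal part of $\nabla f$ and from the geodesic equation $\nabla_{\sigma'}\sigma'=\alpha(\nabla f)^\perp$. Into this we substitute the hypothesis in the form
\[
\Ric\ge(n-1)\lambda+\gamma\,\frac{\Delta u}{u}=(n-1)\lambda+\gamma\big(\Delta f+|\nabla f|^2\big).
\]
The remaining second-order term $\nabla^2 f(\sigma',\sigma')$ equals $\frac{d^2}{ds^2}f(\sigma(s))$ minus a first-order correction, so it can be integrated by parts against $u^{\alpha}\phi^2$.

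The choices are $\alpha=\gamma$ (or a multiple of it, fixed by the cancellation) and a test function $\phi=u^{-\beta}\psi$, with $\psi$ a sine-type function on $[0,\ell]$. After expanding, the integrand becomes a quadratic form in the three quantities $f'$, $|(\nabla f)^\perp|$ and $\psi'/\psi$. We then tune $\beta$ so that this form is nonpositive, which leaves
\[
0\le\int_0^\ell\Big(c_1\psi'^2-(n-1)\lambda\,c_2\,\psi^2\Big)\,w\,ds
\]
for some weight $w$. To remove the weight, we reparametrize by the conformal arclength, or simply choose $\psi$ adapted to $w$. The inequality then fails once $\ell>\pi\sqrt{c_1/((n-1)\lambda c_2)}$, and optimizing the ratio $c_1/c_2$ should produce the constant $n-1+\frac{(n-3)^2}{4/\gamma-n+1}$.

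The main obstacle is this algebraic step: showing that the discriminant condition on the quadratic form can be met exactly when $\gamma<\frac{4}{n-1}$, and extracting the sharp constant. The borderline case $n=3$, $\gamma=2$ is handled by the same computation, where the $(n-3)^2$ term vanishes. Strictness for $n\neq3$ follows because equality would force every inequality above to be an equality, and in particular $\nabla f\equiv0$ along $\sigma$ together with a Jacobi-field equality. That equality would contradict the strict slack in the Cauchy--Schwarz step, which is available exactly when $n\neq3$.
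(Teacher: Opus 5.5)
The paper does not prove this statement; it quotes Shen--Ye and Antonelli--Xu. Your route (weighted length $\int_\sigma u^{\gamma}$, summed second variation, test function $u^{-\beta}\psi$) is the Shen--Ye argument, and it can be completed. As written, though, the proposal stops where the proof actually lives: the decisive computation is deferred (``should produce the constant''), and the devices you offer for removing the weight would not work.

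Reparametrizing by conformal arclength still leaves a $u$-dependent weight, and it would control the $u^{2\gamma}g$-length rather than the $g$-length $\ell\ge d(p,q)$ you need. A $\psi$ ``adapted to $w$'' gives a bound that depends on $u$ (compare the $u_{\max}/u_{\min}$ factor in Theorem~\ref{diameter bound under spectral Ric}).

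There is also nothing left to tune. $\alpha=\gamma$ is forced, since otherwise $(\gamma-\alpha)\Delta f$ is uncontrolled; no other multiple works. $\beta=\gamma/2$ is what makes the weight $u^{\alpha-2\beta}$ disappear. With these choices, write $f(s)=\log u(\sigma(s))$. Insert $\Ric(\sigma',\sigma')\ge(n-1)\lambda+\gamma(\Delta f+|\nabla f|^2)$, drop $\gamma|(\nabla f)^\perp|^2\ge0$, and integrate $-\gamma f''\psi^2$ by parts. This gives
\[
0\le\int_0^\ell\Big[(n-1)\psi'^2+(3-n)\gamma f'\psi\psi'-\frac{\gamma(4-(n-1)\gamma)}{4}f'^2\psi^2-(n-1)\lambda\psi^2\Big]ds .
\]
The form is not made nonpositive. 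Instead, $\gamma<\frac{4}{n-1}$ lets you absorb the cross term by Cauchy--Schwarz, at the cost of an extra $\frac{(n-3)^2}{4/\gamma-n+1}\psi'^2$. Then $\psi=\sin(\pi s/\ell)$ gives exactly the stated bound.

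Two further steps need repair.

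\emph{Existence of the minimizer.} When $N$ is noncompact, $u^{2\gamma}g$ can be incomplete (for instance if $u\to0$ along an end), so local bounds on $u^{\pm1}$ do not give a global minimizer. Instead, if $d(p,q)>C$, minimize in $\overline{B_R(p)}$ with $R>d(p,q)$. If the minimizer stays inside the open ball, it is a stable weighted geodesic with $g$-length at least $d(p,q)>C$. If it touches $\partial B_R(p)$, its subarc up to the first contact point is still a stable weighted geodesic, now with $g$-length $\ge R>C$. Either way you get a contradiction, and compactness follows.

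\emph{Strictness.} Equality does not force $\nabla f\equiv0$ along $\sigma$. Once $N$ is compact, the diameter is attained by some $p,q$. If $d(p,q)=C$, then $\ell=C$ and the sine test function makes the final integral vanish. Equality in the Cauchy--Schwarz step then forces $f'\psi=\frac{2(3-n)}{4-(n-1)\gamma}\psi'$. When $n\ne3$ this gives $|f'|\sim c/s$ as $s\to0$, contradicting smoothness of $u$ at the endpoint. This argument needs $\gamma>0$. For $\gamma=0$ and $n>3$, the round sphere with $u\equiv1$ attains equality, so the ``moreover'' clause, and your claim, fail there.
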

\begin{theorem}[\cite{antonelli-xu}]\label{diameter bound under spectral Ric}
    Let $N^n$ be an $n$-dimensional closed smooth Riemannian manifold with $n\geq 3$, and let $0\leq \gamma \leq \frac{n-1}{n-2}$. Assume there is a positive function $u$ and $\lambda>0$ such that
    \[\gamma\Delta_N u\leq u\Ric_N-(n-1)\lambda u.\]
    Then 
    \[\operatorname{diam}(N)\leq \frac{\pi}{\sqrt{\lambda}}\cdot \left(\frac{u_{\max}}{u_{\min}}\right)^{\frac{n-3}{n-1}\gamma}.\]
    In particular, $N$ has finite fundamental group.
\end{theorem}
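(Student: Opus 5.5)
The plan is to run a warped Bonnet--Myers argument along a minimizing geodesic, using the function $u$ as a weight. Let $d=\operatorname{diam}(N)$. Since $N$ is closed, pick $p,q$ with $d(p,q)=d$ and a unit-speed minimizing geodesic $\sigma:[0,d]\to N$ joining them.

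\textbf{Weighted second variation.} Parallel transport orthonormal fields $E_2,\dots,E_n$ perpendicular to $\sigma'$. Use variations $V_i=\varphi(s)E_i$ with $\varphi(0)=\varphi(d)=0$, and sum the second variation of length over $i$. This gives
\[
0\le \int_0^d \Big((n-1)\varphi'^2-\varphi^2\Ric(\sigma',\sigma')\Big)\,ds .
\]
Rather than taking $\varphi=\sin(\pi s/d)$ directly, write $\varphi=u^{-\alpha}\psi$ along $\sigma$ for an exponent $\alpha$ to be chosen.

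\textbf{Inserting the spectral condition.} Along $\sigma$ we have $\Delta_N u=(u\circ\sigma)''+\sum_{i\ge2}\nabla^2u(E_i,E_i)$. The transverse Hessian terms have no sign, and they are the main obstacle. They can be controlled because a minimizing geodesic also satisfies the second-variation inequality for $\varphi$ weighted by $u$. Concretely, follow the known trick of Antonelli--Xu: instead of length, consider the weighted length functional $L_\alpha(\gamma)=\int_\gamma u^{-\alpha}\,ds$ (equivalently, a conformal metric). Then:
\begin{itemize}
\item The curve minimizing $L_\alpha$ between points at $g$-distance comparable to $d$ has a second variation that produces exactly $\Delta_N u$ minus $\nabla^2u(\sigma',\sigma')$.
\item The leftover $\partial_s^2$ term is integrated by parts along the curve.
\end{itemize}
Combining with $\gamma\Delta u\le u\Ric-(n-1)\lambda u$ gives
\[
0\le\int\Big(c_1\psi'^2-(n-1)\lambda\psi^2+c_2\psi^2\frac{(u')^2}{u^2}(\text{negative coefficient})\Big)u^{-\beta}\,ds ,
\]
where the cross term $\psi\psi' u'/u$ is absorbed by Cauchy--Schwarz. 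Absorbing that cross term requires a quadratic discriminant condition in $\alpha$, and it is solvable precisely when $\gamma\le\frac{n-1}{n-2}$; this is where the hypothesis on $\gamma$ enters.

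\textbf{Diameter bound.} Choosing $\psi=\sin(\pi t/\ell)$, with $\ell$ the $L_\alpha$-length, yields $\ell\le \pi u^{\cdot}/\sqrt{\lambda}$ in the weighted metric. Converting weighted length back to $g$-distance costs factors of $u_{\max}$ and $u_{\min}$ raised to the power $\alpha$. With the choice of $\alpha$ tied to $\frac{n-3}{n-1}\gamma$, these factors assemble into $(u_{\max}/u_{\min})^{\frac{n-3}{n-1}\gamma}$.

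\textbf{Finite fundamental group.} Lift $g$ and $u$ to the universal cover $\tilde N$. The spectral inequality is local, so it persists on $\tilde N$. The lifted $u$ has the same $u_{\max}$ and $u_{\min}$, and the argument above is local along geodesics, so it works on the complete manifold $\tilde N$ with bounded $u$. Hence $\tilde N$ has bounded diameter, so it is compact, and therefore $\pi_1(N)$ is finite.
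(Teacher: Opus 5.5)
The paper does not prove this statement; it quotes it from Antonelli--Xu, so I compare your proposal with the standard argument. Your overall plan is the right one and does prove the theorem: minimize a $u$-weighted length, sum the normal second variations, cancel $\Delta u$ with the hypothesis, integrate the tangential second derivative by parts, and use a power of $u$ as test function. However, the decisive computation is only asserted, and your single exponent $\alpha$ is asked to do three different jobs. Taken at face value, an exponent "tied to $\frac{n-3}{n-1}\gamma$" cannot be the one in the functional you minimize. Also drop the opening paragraph with the $g$-geodesic: a $g$-minimizing geodesic satisfies no $u$-weighted second-variation inequality, so the transverse Hessian really is uncontrolled along it.

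Here is the bookkeeping. Take $p,q$ with $d(p,q)=\operatorname{diam}(N)$, and let $\sigma$ minimize $\int u^{\gamma}\,ds$ from $p$ to $q$, parametrized by $g$-arclength $s\in[0,\ell]$. Then $\ell\ge\operatorname{diam}(N)$.

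\textbf{First exponent.} The weight exponent is forced to be $\gamma$. The summed second variation contains $\Ric(\sigma',\sigma')-a\,u^{-1}\Delta u$, and $\Delta u$ has no sign, so only $a=\gamma$ lets you apply the hypothesis. Regard $\sigma$ as a geodesic of $e^{2w}g$ with $w=\gamma\log u$ and $\tilde T=e^{-w}\sigma'$. Then $\nabla_{\sigma'}\sigma'=\nabla^\perp w$ and $e^{2w}\widetilde{\Ric}(\tilde T,\tilde T)=\Ric(\sigma',\sigma')-(n-2)w_{ss}-\Delta w$. With $v=\log u$, and after dropping the harmless term $-\gamma|\nabla^\perp v|^2\varphi^2$, you get
\[
0\le\int_0^\ell u^{-\gamma}\Big[(n-1)\varphi_s^2-(n-1)\lambda\varphi^2-\gamma v_s^2\varphi^2+(n-2)\gamma\, v_{ss}\varphi^2\Big]\,ds .
\]

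\textbf{Second exponent.} Integrate the $v_{ss}$ term by parts and set $\varphi=u^{\beta}\psi$, where $\beta$ is independent of the weight exponent. The cross term $\psi\psi_s v_s$ then has coefficient $2\big((n-1)\beta-(n-2)\gamma\big)$, which vanishes at $\beta=\frac{n-2}{n-1}\gamma$. This same $\beta$ minimizes the coefficient of $v_s^2\psi^2$, whose minimum value is $\gamma\big(\frac{n-2}{n-1}\gamma-1\big)$. That is $\le 0$ exactly when $\gamma\le\frac{n-1}{n-2}$, which is your discriminant condition, read as a condition on $\beta$. What remains is
\[
\int_0^\ell u^{\frac{n-3}{n-1}\gamma}\big(\psi_s^2-\lambda\psi^2\big)\,ds\ge 0\qquad\text{whenever }\psi(0)=\psi(\ell)=0 .
\]

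\textbf{Third exponent.} The factor in the bound comes from the residual weight $\rho=u^{\frac{n-3}{n-1}\gamma}$. Substitute $dt=\rho^{-1}ds$ to get $\lambda\int\rho^2\psi^2\,dt\le\int\psi_t^2\,dt$, and test with a sine in $t$. This gives $\ell\le\rho_{\max}\int_0^\ell\rho^{-1}ds\le\frac{\pi}{\sqrt\lambda}\,\frac{\rho_{\max}}{\rho_{\min}}$, which is exactly the stated bound. Testing $\sin(\pi s/\ell)$ directly in $s$ even gives the square root of the factor.

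By contrast, your last step (bound the $\int u^{\gamma}$-length of $\sigma$, then convert to $g$-length) loses a factor of order $(u_{\max}/u_{\min})^{\gamma}$. That does not give the statement; for instance, when $n=3$ the exponent must be $0$.

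\textbf{Fundamental group.} The argument is not local along geodesics: on the universal cover you need minimizers of $\int u^{\gamma}\,ds$. These exist because $u^{2\gamma}\tilde g$ is bi-Lipschitz to the complete lifted metric $\tilde g$, hence itself complete. The same estimate then bounds the diameter of the cover, so the cover is compact and $\pi_1(N)$ is finite.
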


\begin{definition}
Let $u$ be a positive function on $N$ and $k\geq 0$ a real number, we say the oriented, embedded hypersurface $\Sigma$ is (absolutely) $\mathcal{A}_k(u)$-minimizing if for any open set $\Omega\subset N$ with compact closure, we have
\[\int_{\Omega\cap \Sigma}u^k\leq \int_{\Omega\cap S}u^k\]
for any oriented, embedded competitor $S$ satisfying $\partial S=\partial\Sigma$ and $S\setminus \Omega=\Sigma\setminus \Omega.$
\end{definition}

\vskip.2cm
For integers $n\geq 3 $ and $1\leq m\leq n-1$ such that $m^2-mn+2n-2>0$ and $m^2-mn+m+n>0$, we denote 
\[D(n,m)=\min\left\{\frac{m}{2m-2},\frac{1}{n-m},\frac{m^2-mn+m+n}{2(m^2-mn+2n-2)}\right\}.\]
We prove

\begin{theorem}\label{spectral inheritance}
    Let $N$ be a $n$-dimensional Riemannian manifold with positive spectral $(k,m)$-intermediate curvature for $k<4$. Suppose $\Sigma$ is an $(n-1)$-dimensional hypersurface that is the minimizer of $\mA_{k}=\int_\Sigma u^k$. Assume that $D(n,m)\geq \frac{k-1}{k}$, then
    
        (1) If $\Sigma$ is closed,  $\Sigma$ admits positive spectral $(\frac{4}{4-k},m-1)$-intermediate curvature.
        
        (2) If $\Sigma$ is complete noncompact, $\Sigma $ admits nonnegative spectral $(\frac{4}{4-k},m-1)$-intermediate curvature.

\end{theorem}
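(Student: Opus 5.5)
Our plan is to run the weighted second variation argument of Brendle--Hirsch--Johne once, and read the resulting inequality on $\Sigma$ as the stability inequality of a Schrödinger-type operator $-\frac{4}{4-k}\Delta_\Sigma+C^\Sigma_{m-1}$. We then pass to its first eigenfunction (closed case) or to a positive solution given by Fischer-Colbrie--Schoen (noncompact case).

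\textbf{First variation.} Let $\nu$ be the unit normal, $A$ the second fundamental form and $H$ the mean curvature of $\Sigma$. Stationarity for $\mA_k$ gives
\[
H=-k\,u^{-1}\langle\nabla_N u,\nu\rangle .
\]

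\textbf{Second variation.} For a normal variation $\phi\nu$, the second variation of $\mA_k$ is nonnegative. We rewrite it with
\[
\Delta_N u=\Delta_\Sigma u+\nabla^2_N u(\nu,\nu)+H\langle\nabla u,\nu\rangle .
\]
This yields, for all compactly supported $\phi$,
\[
\int_\Sigma u^k\Big(|\nabla_\Sigma\phi|^2-\big(|A|^2+\Ric_N(\nu,\nu)\big)\phi^2-k\,u^{-1}\big(\Delta_N u-\Delta_\Sigma u-H\langle\nabla u,\nu\rangle\big)\phi^2-k\,u^{-2}\langle\nabla u,\nu\rangle^2\phi^2\Big)\ge 0 .
\]
We substitute the spectral hypothesis $-k\Delta_N u\ge -C^N_m u$ (with strict inequality in the positive case).

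\textbf{Algebraic step.} Choose $e_1=\nu$ and an orthonormal frame $e_2,\dots,e_m$ of $T\Sigma$ attaining $C^\Sigma_{m-1}$ at the given point. The Gauss equation expresses $C^N_m(\nu,e_2,\dots,e_m)-\Ric_N(\nu,\nu)$ as $C^\Sigma_{m-1}(e_2,\dots,e_m)$ minus a quadratic expression $\mathcal Q(A)$ in the entries of $A$. We then combine $|A|^2$, $\mathcal Q(A)$, and the term $H^2/k$ coming from $\langle\nabla u,\nu\rangle^2 u^{-2}=H^2/k^2$. We must show this combination is nonnegative, i.e.
\[
|A|^2-\mathcal Q(A)+\Big(\tfrac1k-\text{(coefficient left after using } H)\Big)H^2\ge0 .
\]
Here we intend to use the Brendle--Hirsch--Johne lemma bounding $\sum_{p\le m-1}\sum_{q>p}\big(h_{pp}h_{qq}-h_{pq}^2\big)$ by $|A|^2$ and $H^2$. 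That lemma is exactly where the three quantities in $D(n,m)$ arise, and the hypothesis $D(n,m)\ge\frac{k-1}{k}$ is what should close the inequality. We expect this bookkeeping, in particular matching the $H^2$ coefficient $1-\frac1k$ against $D(n,m)$, to be the main obstacle.

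\textbf{Conformal substitution.} We set $\phi=u^{-k/2}\psi$. The cross terms between $\nabla_\Sigma u$ and $\nabla\psi$ are absorbed by Cauchy--Schwarz, using $2ab\le\epsilon a^2+\epsilon^{-1}b^2$ with the optimal $\epsilon$. We expect to arrive at
\[
\int_\Sigma \tfrac{4}{4-k}|\nabla_\Sigma\psi|^2+C^\Sigma_{m-1}\psi^2\ \ge 0 ,
\]
with strict inequality in the positive case; the condition $k<4$ is exactly what keeps $\frac{4}{4-k}$ positive. The weight $u^{k}$ multiplying the zeroth-order term also needs care here: it is removed by the $\Delta_\Sigma u$ term after integrating by parts. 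This step is standard, in the spirit of Chodosh--Li's warped $\mu$-bubble computations.

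\textbf{Conclusion.} If $\Sigma$ is closed, the first eigenfunction $w>0$ of $-\frac{4}{4-k}\Delta_\Sigma+C^\Sigma_{m-1}$ has positive eigenvalue. Since $C^\Sigma_{m-1}$ is only Lipschitz, we regularize it slightly from below or use elliptic regularity to get $w\in C^{2,\beta}$; this gives statement (1). If $\Sigma$ is complete noncompact, nonnegativity of the quadratic form on compactly supported functions gives, by Fischer-Colbrie--Schoen, a positive solution $w$ of $-\frac{4}{4-k}\Delta_\Sigma w+C^\Sigma_{m-1}w=0$. This is why only the nonnegative conclusion (2) is obtained in that case.
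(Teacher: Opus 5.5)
Your outline follows the paper's proof. It has the same ingredients: first and second variation of $\mA_k$, the substitution $\phi=u^{-k/2}\psi$ with Cauchy--Schwarz at $\epsilon=k-\frac{k^2}{4}$ giving the factor $\frac{4}{4-k}$, the spectral hypothesis, the Gauss equation in a frame $\{\nu,e_2,\dots,e_m\}$ realizing $C^\Sigma_{m-1}$, a sharp algebraic bound on $A$, and then the first eigenfunction (closed case) or Fischer-Colbrie--Schoen (noncompact case). Whether you do the algebra before or after the substitution does not matter.

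However, your displayed second variation has a sign error that, taken literally, breaks the argument. Inside the weight $u^k$, the $u$-dependent part is $+ku^{-1}\nabla^2_N u(\nu,\nu)-ku^{-2}\langle\nabla u,\nu\rangle^2$, i.e.\ $+ku^{-1}(\Delta_N u-\Delta_\Sigma u-H\langle\nabla u,\nu\rangle)$, not $-ku^{-1}(\cdots)$. Equivalently, as in the paper, it is $ku^{k-1}(\Delta_N u-\Delta_\Sigma u)\phi^2+k(k-1)u^{k-2}\langle\nabla u,\nu\rangle^2\phi^2$. With your sign, $\Delta_N u$ enters as $-ku^{-1}\Delta_N u$. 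The hypothesis $-k\Delta_N u\ge -C^N_m u$ then bounds that term only from below, which is useless when you need to bound $Q(\phi)\ge 0$ from above. What you need is $ku^{-1}\Delta_N u\le C^N_m$, strict in the positive case. With the correct sign, $-ku^{-1}H\langle\nabla u,\nu\rangle=H^2$ and $-ku^{-2}\langle\nabla u,\nu\rangle^2=-H^2/k$. This gives the coefficient $1-\frac1k$ you quote later, so your bookkeeping already matches the corrected formula rather than the displayed one.

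There is a matching sign slip in the algebraic step. You define $\mathcal Q(A)=\sum_{i=2}^{m}\sum_{j=i+1}^{n}(h_{ii}h_{jj}-h_{ij}^2)$ as the term subtracted from $C^\Sigma_{m-1}$. Then the quantity that must be nonnegative is $|A|^2+\mathcal Q(A)-\frac{k-1}{k}H^2$, not $|A|^2-\mathcal Q(A)+\cdots$. It is closed by $|A|^2+\mathcal Q(A)\ge D(n,m)H^2$ together with $D(n,m)\ge\frac{k-1}{k}$. The paper takes this sharp inequality from Chen \cite[Lemma 5.8]{chenshuli_end}. The Brendle--Hirsch--Johne inequality $|A|^2+\mathcal Q(A)\ge 0$ for minimal $\Sigma$ does not suffice, since here $H=-k\nabla_\nu\log u\neq 0$.

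Finally, the weight $u^k$ is removed by the substitution itself, not by the $\Delta_\Sigma u$ term. Integrating $-ku^{-1}\Delta_\Sigma u\,\psi^2$ by parts and adding $u^k|\nabla\phi|^2$ leaves $|\nabla\psi|^2+(\frac{k^2}{4}-k)\psi^2|\nabla\log u|^2+k\psi\langle\nabla\psi,\nabla\log u\rangle$. It is the negativity of $\frac{k^2}{4}-k$ for $0<k<4$ that lets you absorb the cross term, which is the real role of $k<4$. With these corrections your argument is the paper's.
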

\begin{remark}
    In case (1), if $N$ has nonnegative (uniformly positive) spectral $(k,m)$-intermediate curvature, $\Sigma$ also admits nonnegative (uniformly positive) spectral $(\frac{4}{4-k},m-1)$-intermediate curvature.
\end{remark}
\begin{remark}
    If $k=0$, $D(n,m)\geq \frac{k-1}{k}$ automatically holds since right-hand side is $-\infty$.
\end{remark}

\begin{proof}
    Since $\Sigma$ is the minimizer of $\mA_{k}$, we calculate the first and second variations of $\mA_{k}.$ The first variation is
  
       \begin{equation*}
        \frac{d}{d t}\bigg|_{t=0}\mA_{k}(\Sigma_t)=\int_{\Sigma_t}\left(H_\Sigma u^k+\lp\nabla^{N}u^k,\nu\rp\right)\psi d\mH^{n-1}
    \end{equation*}
    Thus $H_\Sigma=-k \nabla^N_\nu\log u$ on $\Sigma$. 
    The second variation is \begin{equation*}
        \begin{split}
            \frac{d^2}{d t^2}\bigg|_{t=0}\mA_{k}(\Sigma_t)&=\int_{\Sigma}|\nabla^{\Sigma}\psi|^2 u^k-\left(|A_{\Sigma}|^2+\Ric_N(\nu,\nu)\right)\psi^2 u^k\\
            &+\int_\Sigma k(k-1)u^{k-2}\left(\nabla^N_{\nu}u\right)^2\psi^2 +\int_{\Sigma}ku^{k-1}\left(\Delta_N u-\Delta_{\Sigma} u\right)\psi^2\\
            &\geq 0.
        \end{split}
    \end{equation*}
    Let $\psi=u^{-k/2}\phi$. 
    Then
    \[|\nabla^{\Sigma}\psi|^2 =u^{-k}|\nabla^\Sigma\phi|^2+\frac{k^2}{4}\phi^2u^{-k-2}|\nabla^\Sigma u|^2-ku^{-k-1}\phi\nabla^\Sigma\phi\cdot\nabla^\Sigma u.\]
    When $k\neq 0$, we obtain
    \begin{equation}\label{A_k-stability inequality}
    \begin{split}
        0&\leq \int_\Sigma |\nabla^\Sigma \phi|^2+(ku^{-1}\Delta_N u-|A_\Sigma|^2-\Ric_N(\nu,\nu)+\frac{k-1}{k}H_{\Sigma}^2)\phi^2\\
        & +\int_\Sigma (\frac{k^2}{4}-k)\phi^2|\nabla^\Sigma \log u|^2+k\phi\nabla^\Sigma\phi\cdot\nabla^\Sigma\log u
    \end{split}
    \end{equation}
    where we have used the critical equation. Using the Cauchy-Schwarz inequality
    \[k|\phi \nabla^\Sigma\phi||\nabla^\Sigma\log u|\leq \epsilon \phi^2|\nabla^\Sigma \log u|^2+\frac{k^2}{4\epsilon}|\nabla^\Sigma \phi|^2\]
    and taking $\epsilon=k-k^2/4$, we have
    \[0\leq \int_\Sigma \frac{4}{4-k}|\nabla^\Sigma\phi|^2+(ku^{-1}\Delta_N u-|A_\Sigma|^2-\Ric_N(\nu,\nu)+\frac{k-1}{k}H_{\Sigma}^2)\phi^2.\]
    
    It follows from the assumption that $M$ has positive spectral $(k,m)$-intermediate curvature that 
    \begin{equation*}
        0<\int_\Sigma \frac{4}{4-k}|\nabla^\Sigma\phi|^2+(C_m^N-|A_\Sigma|^2-\Ric_N(\nu,\nu)+\frac{k-1}{k}H_{\Sigma}^2)\phi^2.
    \end{equation*}
    Let $\nu$ be the unit normal vector field of $\Sigma$ in $N$. At a point $p\in \Sigma$, we assume that $\{e_2,\cdots,e_n\}$ is an  orthonormal basis of $T_p\Sigma$ such that $\{e_1=\nu,e_2,\cdots,e_n\}$ is an orthonormal basis of $T_pN$ and $C_{m-1}^\Sigma=C_{m-1}^\Sigma(e_2,\cdots,e_m).$ Then Gauss equation yields
\begin{align*}
    C_m^M&\leq \sum_{i=2}^{m}\sum_{j=i+1}^n R^M_{ijij}+\Ric_N(\nu,\nu)\\
    &=\sum_{i=2}^{m}\sum_{j=i+1}^n (R^\Sigma_{ijij}-h_{ii}h_{jj}+h_{ij}^2)+\Ric_N(\nu,\nu)\\ &=C^\Sigma_{m-1}-\sum_{i=2}^{m}\sum_{j=i+1}^n (h_{ii}h_{jj}-h_{ij}^2)+\Ric_N(\nu,\nu)
\end{align*}    
where $h(\cdot,\cdot)$ is the second fundamental form of $\Sigma$ in $N$ with respect to the unit normal $\nu$. Now we have
\[0< \int_\Sigma \frac{4}{4-k}|\nabla^\Sigma\phi|^2+\left(C_{m-1}^\Sigma-|A_\Sigma|^2-\sum_{i=2}^{m}\sum_{j=i+1}^n (h_{ii}h_{jj}-h_{ij}^2)+\frac{k-1}{k}H_{\Sigma}^2\right)\phi^2.\]

In the following we directly cite a sharp algebraic inequality proved by Chen \cite[Lemma 5.8]{chenshuli_end}:
\[|A_\Sigma|^2+\sum_{i=2}^{m}\sum_{j=i+1}^n (h_{ii}h_{jj}-h_{ij}^2)\geq D(n,m)H_\Sigma^2\]
if $m^2-mn+2n-2>0$ and $m^2-mn+m+n>0$.

Using above inequality, we have
\[0<\int_\Sigma \frac{4}{4-k}|\nabla^\Sigma\phi|^2+C_{m-1}^\Sigma\phi^2\]
When $\Sigma$ is compact, it implies that there exists a positive function $v$ on $\Sigma$ such that
\[-\frac{4}{4-k}\Delta_\Sigma v+C^\Sigma_{m-1}v=\lambda_1 u> 0.\]
When $\Sigma$ is noncompact it follows from Fischer-Colbrie and Schoen \cite{Fischer-Colbrie-Schoen-The-structure-of-complete-stable} that there exists a positive function $v$ on $\Sigma$ such that 
\[-\frac{4}{4-k}\Delta_\Sigma v+C^\Sigma_{m-1}v=0.\]

\end{proof}

\begin{remark}\label{brendle's inequality}
    When $\Sigma$ is minimal, Brendle-Hirsch-Johne proved \cite{brendlegeroch'sconjecture} that
\[|A_\Sigma|^2+\sum_{i=2}^{m}\sum_{j=i+1}^n (h_{ii}h_{jj}-h_{ij}^2)\geq 0   \] 
if $m^2-mn+2n-2>0$. Equality holds iff $\Sigma$ is totally geodesic.
\end{remark}

Note that for $n\geq 3$, we always have $D(n,n-1)\geq \frac{1}{2}+\frac{1}{2n-2}>\frac{1}{2}$. Thus
\begin{corollary}
    Let $N$ be an $n$-dimensional Riemannian manifold with positive spectral $k$-scalar curvature. Assume $k<2$. Suppose $\Sigma$ is a $(n-1)$-dimensional hypersurface that is the smooth minimizer of $\mA_{k}=\int_\Sigma u^k$. Then $\Sigma$ admits positive spectral $\frac{4}{4-k}$-scalar curvature.
\end{corollary}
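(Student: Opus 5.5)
This is the case $m=n-1$ of Theorem \ref{spectral inheritance}, and the plan is to check its hypotheses and then translate the conclusion back into scalar curvature. First I would match the two notions of curvature. By the definition, $2C^N_{n-1}=R_N$. So positive spectral $k$-scalar curvature of $N$, namely $-k\Delta_N u+R_Nu>0$, is the same as
\[
-\tfrac{k}{2}\Delta_N u+C^N_{n-1}u>0 .
\]
This is not literally spectral $(k,n-1)$-intermediate curvature with the same constant $k$. To avoid the mismatch, I would not apply Theorem \ref{spectral inheritance} as a black box. Instead I would rerun its proof verbatim with $m=n-1$. The key step there is replacing $ku^{-1}\Delta_N u$ by $C^N_m$, and here it becomes replacing $ku^{-1}\Delta_N u$ by $R_N$.

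So I start from the stability inequality obtained after the Cauchy--Schwarz step with $\epsilon=k-k^2/4$ (this needs $k<4$, which holds since $k<2$):
\[
0\le\int_\Sigma \tfrac{4}{4-k}|\nabla^\Sigma\phi|^2+\Bigl(ku^{-1}\Delta_N u-|A_\Sigma|^2-\Ric_N(\nu,\nu)+\tfrac{k-1}{k}H_\Sigma^2\Bigr)\phi^2 .
\]
Using $ku^{-1}\Delta_Nu<R_N$ together with the traced Gauss equation $R_N-2\Ric_N(\nu,\nu)=R_\Sigma-H_\Sigma^2+|A_\Sigma|^2$, the zeroth-order coefficient becomes
\[
R_N-\Ric_N(\nu,\nu)-|A|^2+\tfrac{k-1}{k}H^2=\tfrac12 R_\Sigma+\tfrac12 R_N-\tfrac12|A|^2-\tfrac12H^2+\tfrac{k-1}{k}H^2 .
\]
This is where the constant has to be tracked carefully. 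In the theorem's normalization one obtains $C^\Sigma_{n-2}=\tfrac12R_\Sigma$ plus second-fundamental-form terms, and the bound $|A|^2+\sum(h_{ii}h_{jj}-h_{ij}^2)\ge D(n,n-1)H^2$ absorbs them provided $D(n,n-1)\ge\frac{k-1}{k}$.

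The hypothesis $k<2$ gives $\frac{k-1}{k}<\frac12$. The remark preceding the corollary gives $D(n,n-1)\ge\frac12+\frac1{2n-2}>\frac12$, so the condition $D(n,n-1)\ge\frac{k-1}{k}$ holds. For $n\ge3$ both algebraic inequalities defining $D$ hold when $m=n-1$: $m^2-mn+2n-2=n-1>0$ and $m^2-mn+m+n=n>0$. Hence the argument yields
\[
0<\int_\Sigma \tfrac{4}{4-k}|\nabla^\Sigma\phi|^2+C^\Sigma_{n-2}\phi^2
=\int_\Sigma \tfrac{4}{4-k}|\nabla^\Sigma\phi|^2+\tfrac12R_\Sigma\phi^2 .
\]

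The final step is the one I expect to be delicate: passing from this form to the claimed operator $-\frac{4}{4-k}\Delta_\Sigma+R_\Sigma$. The factor $\tfrac12$ on $R_\Sigma$ has to be reconciled with the spectral constant. The cleanest route is to keep $R_N$ and $R_\Sigma$ throughout, rather than halving, and redo the algebra directly:
\[
-|A|^2-\Ric_N(\nu,\nu)+\tfrac{k-1}{k}H^2=\tfrac12\bigl(R_\Sigma-R_N\bigr)-\tfrac12|A|^2+\bigl(\tfrac{k-1}{k}-\tfrac12\bigr)H^2 ,
\]
using $|A|^2\ge H^2/(n-1)$. If the factor $\tfrac12$ persists, the conclusion still holds after rescaling: $\int \frac{4}{4-k}|\nabla\phi|^2+\frac12R_\Sigma\phi^2>0$ implies $\int \frac{8}{4-k}|\nabla\phi|^2+R_\Sigma\phi^2>0$. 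I would take the normalization as the one implicit in the paper's convention, since $2C_{n-1}=R$ on both $N$ and $\Sigma$ and the rescaling treats them consistently.

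Having the positive quadratic form on the closed minimizer $\Sigma$, the first eigenfunction $v>0$ of the corresponding operator gives the required function: it satisfies $-\frac{4}{4-k}\Delta_\Sigma v+R_\Sigma v=\lambda_1 v$ with $\lambda_1>0$, which is positive spectral $\frac{4}{4-k}$-scalar curvature.
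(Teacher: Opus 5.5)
\textbf{There is a genuine gap, and it sits exactly at the normalization issue you flagged.} You were right that $-k\Delta_N u+R_Nu>0$ is not spectral $(k,n-1)$-intermediate curvature.

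\emph{The rerun does not close the inequality.} Your rerun only has $ku^{-1}\Delta_N u<R_N$ available. Your own computation then bounds the zeroth-order coefficient by $\tfrac12R_\Sigma+\tfrac12R_N-\tfrac12|A_\Sigma|^2+(\tfrac{k-1}{k}-\tfrac12)H_\Sigma^2$. The term $+\tfrac12R_N\phi^2$ has the wrong sign to discard. So the displayed inequality $0<\int_\Sigma\frac{4}{4-k}|\nabla^\Sigma\phi|^2+\tfrac12R_\Sigma\phi^2$ does not follow from your hypothesis. It follows only if $ku^{-1}\Delta_Nu<\tfrac12R_N=C^N_{n-1}$, i.e.\ under spectral $(k,n-1)$-intermediate curvature. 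In the paper's definition that is spectral $2k$-scalar curvature, and your ``hence'' silently switches to this stronger hypothesis.

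\emph{The final constant is also wrong.} Even granting that inequality, the rescaling gives positive spectral $\frac{8}{4-k}$-scalar curvature. This is strictly weaker than the claimed $\frac{4}{4-k}$, since a larger coefficient in front of $-\Delta_\Sigma$ is a weaker condition. Declaring the normalization ``implicit in the paper's convention'' is not an argument.

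\textbf{No rerun can fix this, because the literal statement fails already at $k=0$.} Take $N=\mathbb{S}^2\times\mathbb{S}^1$ with $g=g_\Sigma+w^2d\theta^2$, where $w>0$ solves $-\Delta_\Sigma w+K_\Sigma w=\mu w$ with $\mu>0$. Then:
\begin{enumerate}
\item $R_N=R_\Sigma-2w^{-1}\Delta_\Sigma w=2\mu>0$.
\item The slices $\mathbb{S}^2\times\{\theta\}$ are area-minimizing in their homology class, because the projection to $\mathbb{S}^2$ is $1$-Lipschitz.
\item The claimed conclusion would force $\lambda_1(-\Delta_\Sigma+2K_\Sigma)>0$.
\end{enumerate}
Sphere metrics with $\lambda_1(-\Delta+K)>0\ge\lambda_1(-\Delta+2K)$ do exist. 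Deform a round sphere toward a thin dumbbell, for which $\lambda_1(-\Delta+K)<0$. At the first parameter where $\lambda_1(-\Delta+K)=0$, concavity of $\beta\mapsto\lambda_1(-\Delta+\beta K)$ together with Gauss--Bonnet forces $\lambda_1(-\Delta+2K)<0$. Continuity then gives such a metric slightly earlier along the path.

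\textbf{What the paper does, and what your computation actually proves.} The paper's proof is Theorem~\ref{spectral inheritance} with $m=n-1$, plus the check $D(n,n-1)=\frac{n}{2(n-1)}>\frac12>\frac{k-1}{k}$. It is valid precisely when spectral scalar curvature is measured by $C_{n-1}=\frac12R$ on both $N$ and $\Sigma$. Equivalently, in the $R$-normalization: spectral $2k$-scalar curvature on $N$ gives spectral $\frac{8}{4-k}$-scalar curvature on the $\mA_k(u)$-minimizer. That is exactly what your computation yields once the hypothesis is stated correctly. Under that reading, the black-box application you set aside, together with your correct checks of $D(n,n-1)$ and of the two algebraic inequalities for $m=n-1$, is the entire proof.
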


This inheritance of positive spectral scalar curvature resembles the fact that stable minimal hypersurface admits pointwise positive scalar curvature.

\vskip.2cm
Note that  for $3\leq n\leq 7$ and $1\leq m\leq n-1$ such that $m^2-mn+2n-2>0$ and $m^2-mn+m+n>0$, we always have
\[D(n,m)=\frac{m^2-mn+m+n}{2(m^2-mn+2n-2)}.\]
To make sure we can iterate Theorem \ref{spectral inheritance} by $m-1$ times from $m$-intermediate curvature to spectral $(0,1)$-intermediate (Ricci) curvature, we need to make sure
\begin{equation}\label{D-inequality}
    D(n-\ell,m-\ell)\geq \frac{\ell-1}{2\ell}
\end{equation}
for $0\leq \ell\leq m-2.$ This is true by case-checking.

Thus we can give a proof of the result of Brendle-Hirsch-Johne \cite{brendlegeroch'sconjecture}.
\begin{theorem}[\cite{brendlegeroch'sconjecture}]\label{a new proof of brendle's result}
    The compact product manifold \( N^n = M^{n-m} \times \mathbb{T}^m \) do not admit a metric of positive \( m \)-intermediate curvature for \(3\leq  n \leq 7 \) and \( 1 \leq m \leq n - 1 \).
\end{theorem}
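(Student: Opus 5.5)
Argue by contradiction. Suppose $g$ is a metric on $N=M^{n-m}\times\mathbb{T}^m$ with $C_m^N>0$; by compactness it is uniformly positive. The idea is to iterate Theorem \ref{spectral inheritance} $m-1$ times along a slicing built from the torus factors. This descends to a closed slice $\Sigma_{m-1}^{n-m+1}$, homologous to $M\times\mathbb{S}^1$, that carries positive spectral Ricci curvature. Theorem \ref{diameter bound under spectral Ric} then forces finite fundamental group, which contradicts the surviving circle factor. If $n-m+1=2$, we instead use one more step down to a circle, where no positive strictly concave function exists.

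The slicing is as follows. Set $\Sigma_0=N$ and $u_0\equiv 1$, so that $N$ has positive spectral $(0,m)$-intermediate curvature. Inductively, suppose $\Sigma_\ell$ is closed with positive spectral $(k_\ell,m-\ell)$-intermediate curvature, carried by a function $u_\ell$, where $k_0=0$ and $k_{\ell+1}=\frac{4}{4-k_\ell}$. One checks $k_\ell=\frac{2\ell}{\ell+1}<2<4$. Take the homology class $[M\times\mathbb{T}^{m-\ell-1}]$ in $\Sigma_\ell$, nonzero by the product/degree structure, and minimize the weighted area $\mathcal{A}_{k_\ell}(u_\ell)$ in it. Since $n\le7$ and the weight is smooth and positive, geometric measure theory gives a smooth closed embedded minimizer; replacing it by a suitable component (as in Schoen--Yau and Brendle--Hirsch--Johne) keeps a nonzero-degree map onto $M\times\mathbb{T}^{m-\ell-1}$. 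Apply Theorem \ref{spectral inheritance}(1) with dimension $n-\ell$ and curvature index $m-\ell$. Its hypothesis $D(n-\ell,m-\ell)\ge\frac{k_\ell-1}{k_\ell}=\frac{\ell-1}{2\ell}$ is exactly inequality \eqref{D-inequality}, which was checked case by case. So $\Sigma_{\ell+1}$ has positive spectral $(k_{\ell+1},m-\ell-1)$-intermediate curvature.

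After $m-1$ steps, $\Sigma_{m-1}$ has dimension $n-m+1$ and satisfies
\[
-\tfrac{2m-2}{m}\Delta v+\Ric\, v>0
\]
for some $v>0$, hence $\ge\lambda v$ by compactness. To apply Theorem \ref{diameter bound under spectral Ric} with $d=n-m+1\ge3$ we need $\frac{2m-2}{m}\le\frac{d-1}{d-2}$. This holds whenever $m^2-mn+m+n>0$, but it may fail otherwise. I expect this to be the main obstacle, since the statement claims all $1\le m\le n-1$.

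To handle it, I would avoid the diameter theorem at the last stage. Instead, pass to the cover of $\Sigma_{m-1}$ that unwraps the remaining $\mathbb{S}^1$ direction; the lifted $v$ stays bounded between positive constants. Minimize $\mathcal{A}_{k}(v)$ once more in the class of $M$, obtaining positive spectral $(k',0)$-intermediate curvature with $C_0=0$, i.e. $-k'\Delta w>0$ for some $w>0$ on a closed manifold, which is impossible by the maximum principle. The difficulty is that \eqref{D-inequality} is only assured for $\ell\le m-2$, so this step needs $D$ at $(n-m+1,1)$ with $k=\frac{2m-2}{m}$. Since $C_1=\Ric$ and the Gauss term there is only $|A|^2$, the inequality $|A|^2\ge\frac1{d-1}H^2$ with $\frac1{d-1}\ge\frac{k-1}{k}$ is the condition to verify, and I would check it case by case for $n\le7$. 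If it fails in some case, I would fall back on the pointwise argument of Remark \ref{brendle's inequality}, using a genuinely minimal slicing at the first stages.
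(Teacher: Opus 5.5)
Your first $m-1$ steps are the paper's proof: the same weighted slicing with $k_\ell=\frac{2\ell}{\ell+1}$, justified by \eqref{D-inequality}. The paper then finishes on $\Sigma^{n-m+1}$ with Theorem \ref{diameter bound under spectral Ric}, exactly as in your main line. The obstacle you anticipate at that last step does not occur. For $2\le m\le n-2$, the condition $\frac{2m-2}{m}\le\frac{n-m}{n-m-1}$ is equivalent to $(m-2)(n-m-2)\le 2$. The two nonnegative integer factors sum to $n-4\le 3$, so the product is at most $2$ for every $n\le 7$. This covers the pairs $(6,2),(6,3),(7,2),(7,3),(7,4)$ where $m^2-mn+m+n\le 0$, with equality exactly at $(7,3)$ and $(7,4)$. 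This is the case check the paper makes. For $m=n-1$ the last slice is a surface, where Theorem \ref{diameter bound under spectral Ric} does not literally apply and the paper is silent. Your extra step handles it, as does dividing $-\gamma\Delta v+Kv>0$ by $v$ and integrating to get $\int K>0$.

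Your alternative ending is also correct, and it is a genuinely different finish. You apply Theorem \ref{spectral inheritance} once more with curvature index $1$. There the Gauss step gives $C_1^N-\Ric_N(\nu,\nu)\le 0=C_0^\Sigma$, and the only algebraic input is $|A|^2\ge\frac{1}{n-m}H^2$. The inequality you deferred to case-checking is $\frac{1}{n-m}\ge\frac{k-1}{k}=\frac{m-2}{2m-2}$. It is exactly $m^2-mn+2n-2\ge 0$, i.e. $m(n-m)\le 2n-2$. This holds for all $n\le 7$, with equality at $n=7$, $m\in\{3,4\}$, so the fallback to Remark \ref{brendle's inequality} is never needed. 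The contradiction is immediate: put $\phi\equiv1$ into $0<\int\frac{4}{4-k}|\nabla\phi|^2$ to get $0<0$.

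The two routes trade off as follows. Yours is the ``spectral Geroch'' argument mentioned in the introduction. It is self-contained, treats $m=n-1$ uniformly, and avoids the Antonelli--Xu diameter estimate, at the price of one more slice. The paper stops one slice earlier but leans on that deep estimate.

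One correction: do not pass to the $\mathbb{Z}$-cover for the last step. Minimize directly in the closed manifold $\Sigma_{m-1}$, in the class $[\Sigma_{m-1}]\frown f^*[d\theta]$, as in the earlier steps. In the noncompact cover you would need a separate existence argument for a closed minimizer. The two-sided bound on the lifted $v$ only matters for the diameter-theorem route.
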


\begin{proof}
    The dimension $n\leq 7$ is to ensure the regularity. We only need to prove the theorem when $m\geq 2$ due to Bonnet-Meyer's theorem. Assume that $N$ has positive $(0,m)$-intermediate curvature, i.e., $C_m^N>0.$ First we find an area minimizer $\Sigma^{n-1}$ (closed manifold) in the homology class $[M^{n-m}\times \mathbb{T}^{m-1}\times \{p\}]$. By Theorem \ref{spectral inheritance}, $\Sigma^{n-1}$ admits positive spectral $(1,m-1)$-intermediate curvature. We then continue to find minimizers of $\mathcal{A}_k$ and apply Theorem \ref{spectral inheritance} by $m-2$ more times. In the end, we can construct a $(n-m+1)$-dimensional closed manifold $\Sigma^{n-m+1}$ that maps to $M^{n-m}\times \mathbb{S}^1$ with nonzero degree and admits positive spectral $\frac{2m-2}{m}$-Ricci curvature.  It is not hard to check that 
    \begin{equation}\label{condition 1}
        \frac{2m-2}{m}\leq \frac{n-m}{n-m-1}
    \end{equation}
    holds for \(3\leq  n \leq 7 \) and \( 2 \leq m \leq n - 1 \). Then, according to Theorem \ref{diameter bound under spectral Ric}, $\Sigma^{n-m+1}$ has diameter upper bound. This is a contradiction, since its $\mathbb{R}$-cover satisfies same spectral condition but has infinite diameter. 
\end{proof}

\section{Splitting theorem}\label{section3}
In this section, we work on manifolds of type $M^{n-m}\times \mathbb{T}^{m-1}\times\mathbb{R}$ with a Riemannian metric $g$ where $M$ is some compact manifold.

A sharp spectral version of Cheeger-Gromoll splitting theorem was proved by Antonelli-Pozzetta-Xu and Catino-Mari-Mastrolia-Roncoroni.
\begin{theorem}[\cite{antonelli-pozzetta-xu,catino-Mari-Mastrolia-Roncoroni}]\label{splitting of nonnegative spectral ric}
    Let $n\geq 2,$ $\gamma<\frac{4}{n-1}$, and let $(N^n,g)$ be an $n$-dimensional smooth complete noncompact Riemannian manifold without boundary. Assume that $M$ has at least two ends and satisfies
    \[\lambda_1(-\gamma\Delta_N+\Ric_N)\geq 0.\]
    Then $\Ric\geq 0$ on $N$. In particular, the manifold splits off an $\mathbb{R}$ and a closed manifold $E$ with nonnegative Ricci curvature.
\end{theorem}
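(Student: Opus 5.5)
The plan is to follow the Li--Tam / Li--Wang strategy for manifolds with several ends, adapted to the spectral setting. The key point is that the spectral hypothesis should force a positive first eigenfunction $u$ and the gradient of a suitable harmonic function to be rigidly linked.

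First, by Fischer-Colbrie--Schoen, the condition $\lambda_1(-\gamma\Delta_N+\Ric_N)\geq 0$ yields a positive $u\in C^{2,\beta}(N)$ with $-\gamma\Delta_N u+\Ric_N u\geq 0$. Next I would use the two ends to produce a nonconstant harmonic function $h$, distinguishing two cases.
\begin{enumerate}
\item If at least two ends are nonparabolic, take the Li--Tam barrier construction. Solve the Dirichlet problems on exhausting domains $\Omega_R$ with boundary values $1$ on one end and $0$ on the others, and pass to the limit. This gives a bounded harmonic $h$ with $\int_N|\nabla h|^2<\infty$.
\item If an end is parabolic, take instead the Li--Tam/Nakai-type harmonic function that tends to $+\infty$ along the parabolic end and to $-\infty$ (or stays bounded) along the other end. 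Its energy has linear growth on sublevel sets, so $\int_{\{a<h<b\}}|\nabla h|^2\le C(b-a)$.
\end{enumerate}

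The core step is an integral Bochner argument with weights. Set $w=|\nabla h|$ and use Bochner together with the refined Kato inequality for harmonic functions:
\[
w\Delta w\geq \frac{1}{n-1}|\nabla w|^2-\Ric_N w^2 .
\]
I would then test the spectral inequality with $\phi=w^{\alpha}\eta$ for a cutoff $\eta$, i.e. use $\int\gamma|\nabla\phi|^2+\Ric_N\phi^2\geq 0$, and bound $\int\Ric_N w^{2\alpha}\eta^2$ from above by the Bochner--Kato inequality multiplied by $w^{2\alpha-2}\eta^2$ and integrated by parts. Combining the two produces a quadratic form in $|\nabla w|$ and $w$, plus cutoff error terms. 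The exponent $\alpha$ must be chosen so that the coefficient of $\int w^{2\alpha-2}|\nabla w|^2\eta^2$ has a good sign. This sign condition is exactly $\gamma<\frac{4}{n-1}$: optimizing in $\alpha$ gives $\frac{4}{\gamma}>n-1$. The cutoff terms are controlled by the finite energy in case (i), or by the linear energy growth on the level sets of $h$ in case (ii), with $\eta$ a function of $h$ and of the distance.

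Letting the cutoffs exhaust $N$ would then force every inequality to be an equality. So $\nabla w\equiv 0$, meaning $|\nabla h|$ is a positive constant; Kato is an equality; and $\phi=w^\alpha$ achieves $\lambda_1=0$, hence $u$ is a multiple of $w^\alpha$ and therefore constant. With $u$ constant the spectral inequality becomes $\Ric_N\geq 0$ pointwise. Equality in Bochner with $|\nabla h|$ constant gives $\nabla^2h=0$, so $\nabla h$ is parallel and $N=\mathbb{R}\times E$ isometrically. Since $N$ has two ends, $E$ is compact, and $\Ric_E\geq 0$ follows from $\Ric_N\geq 0$.

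The main obstacle I expect is the cutoff analysis in the parabolic case (ii). The finite-energy estimates are unavailable there, so I would use test functions depending on $h$ itself, following Li--Wang's treatment of ends with $\lambda_1\geq 0$. The risk is that the borderline choice of $\alpha$ needed for $\gamma$ close to $\frac{4}{n-1}$ makes the error terms only barely vanish. If this fails, I would fall back on the Cheeger--Gromoll line through both ends. I would minimize the weighted length $\int u^{\gamma'}\,ds$ to obtain a weighted line, and run the second variation along it with test function $u^{-\gamma'/2}\phi$, as in the proof of Theorem \ref{spectral inheritance}. This should show that a suitably weighted Busemann function is harmonic, after which the same Bochner equality argument applies.
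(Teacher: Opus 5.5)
The paper does not prove this statement; it quotes it from Antonelli--Pozzetta--Xu ($\mu$-bubbles) and Catino--Mari--Mastrolia--Roncoroni (criticality theory). Your Li--Wang type argument therefore has to stand on its own, and the step you treat as the crux is miscomputed.

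With the correct Bochner--Kato inequality $w\Delta w\ge\frac{1}{n-1}|\nabla w|^2+\Ric_N w^2$ (your display reverses the sign of the Ricci term), testing with $\phi=w^\alpha\eta$ gives
\[
0\le Q(\alpha)\int w^{2\alpha-2}|\nabla w|^2\eta^2+2(\gamma\alpha-1)\int w^{2\alpha-1}\eta\langle\nabla w,\nabla\eta\rangle+\gamma\int w^{2\alpha}|\nabla\eta|^2,
\]
where $Q(\alpha)=\gamma\alpha^2-2\alpha+\frac{n-2}{n-1}$. Optimizing at $\alpha=1/\gamma$ yields $\gamma<\frac{n-1}{n-2}$, which is the Li--Wang constant, not $\gamma<\frac{4}{n-1}$. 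Since $(n-1)^2-4(n-2)=(n-3)^2$, it is strictly larger than $\frac{4}{n-1}$ for $n\ge4$.

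So if your cutoff analysis went through for the $\alpha$ you would pick, you would prove splitting for $\gamma\in[\frac{4}{n-1},\frac{n-1}{n-2})$, which is false. Xu's warped products $dr^2+\epsilon^2f(r)^2g_{\mathbb{S}^{d-1}}$ from Section 4 (e.g.\ $d=5$, $\gamma=1$, coming from $(n,m)=(6,2)$) have two ends and $\lambda_1(-\gamma\Delta+\Ric)>0$, yet do not split. In them both ends have finite volume and the natural harmonic function has $|\nabla h|=f^{-(d-1)}\to\infty$. Every $\alpha$ with $Q(\alpha)<0$ satisfies $\alpha>\frac12$, and for such $\alpha$ the term $\int w^{2\alpha}|\nabla\eta|^2$ is bounded below by a weighted capacity that blows up. Thus the parabolic cutoff step is not a worry about borderline exponents: it is exactly where the sharp constant must be produced, and nothing in the proposal produces it.

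Concretely, with $\eta=\eta(h)$ the error term is $\int\eta'(t)^2\big(\int_{\{h=t\}}|\nabla h|^{2\alpha+1}\big)\,dt$. The flux identity behind your ``linear energy growth on slabs'' controls this only for $\alpha=0$, where $Q(0)=\frac{n-2}{n-1}>0$ has the wrong sign. The exponent that actually encodes $\frac{4}{n-1}$ is $\alpha=\frac12$, where $Q(\frac12)=\frac{\gamma}{4}-\frac{1}{n-1}$. To close the argument there you would need a new estimate, such as $\int\eta'(h)^2|\nabla h|^3\to0$ or $\int_{B_R}|\nabla h|=o(R^2)$. That amounts to controlling the areas of the level sets of $h$ and its growth, which neither finite energy nor the flux identity provides. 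This is precisely the ingredient the cited proofs get from $\mu$-bubbles or from criticality theory.

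Your fallback via a weighted line is where $\frac{4}{n-1}$ naturally appears: summing the second variation of $\int u^\gamma ds$ over the $n-1$ normal directions. However, the claim that a weighted Busemann function is harmonic is unsupported in the spectral setting. The known argument (Hong--Wang, mirrored in the proof of Theorem \ref{spectral splitting theorem of C_2}) instead perturbs $u$, produces nearby weighted minimizers through every point, derives $\Ric\ge0$, and only then applies Cheeger--Gromoll.

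Finally, even granting $\nabla w=0$, the inference that $u$ is a multiple of $w^\alpha$ needs ground-state uniqueness (criticality theory), since a constant is not an $L^2$ eigenfunction. It is simpler to use $\nabla^2h=0$ to get $N=\mathbb{R}\times E$, and then test with functions of $t$ to obtain $\Ric_E\ge0$.
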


Antonelli-Pozzetta-Xu made use of $\mu$-bubble while Catino-Mari-Mastrolia-Roncoroni applied the criticality theory of Schrodinger operator. The assumption of two ends implies the existence of a geodesic line but assuming a geodesic line is not sufficient to prove splitting. In fact,  by perturbing the Euclidean metric in a small compact set, they can show in \cite{antonelli-pozzetta-xu} that $\mathbb{R}^n$ with this new metric contains a geodesic line ( infinitely many) and has nonnegative $\gamma$-spectral Ricci curvature. Apparently, this manifold does not split. By using a different idea, the second author with Wang \cite{hong-wang-splitting} showed that if $M$ has nonnegative $\gamma$-spectral Ricci curvature and
contains a weighted geodesic line (exists if $M$ has at least two ends), i.e., an $\mathbb{R}$-minimizer to the functional $\int u^\gamma$ where $-\gamma \Delta_N u+\Ric_N u=0$ and $\gamma<\frac{4}{n-1}$, then $N$ splits.
\vskip.1cm
The main theorem of this section is the following (Theorem \ref{splitting of nonnegative C_m: main in the introduction} in the introduction).
\begin{theorem}\label{splitting of nonnegative C_m}
Assume either \(3 \leq  n \leq  5\), \(1 \leq  m \leq  n-1\) or \(6 \leq  n \leq  7\), \(m\in \{1,n-2,n-1\}\). Let \((N^n,g)\) be an orientable complete noncompact Riemannian manifold with nonnegative $m$-intermediate curvature, which admits a smooth proper map \(f : N \to M^{n-m}\times \mathbb{T}^{m-1} \times \mathbb{R}\) with nonzero degree. Then \((N,g)\) is isometric to the Riemannian product of a closed manifold $E$, flat $(m-1)$-torus and the real line where $E$ has nonnegative Ricci curvature.
\end{theorem}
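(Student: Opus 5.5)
We argue by induction on $m$, following the slicing scheme of Figure~\ref{picture of proof}. The case $m=1$ is the Cheeger--Gromoll splitting theorem: the nonzero-degree proper map to $M^{n-1}\times\mathbb{R}$ forces $N$ to have at least two ends, hence a line. For $m\ge 2$, we pull back the generator of $H^1(\mathbb{T}^{m-1})$ corresponding to the first circle factor. Using properness and the nonzero degree, we then minimize $\mathcal{A}_0$ (area) in the corresponding noncompact homology class. Concretely, we exhaust $N$ by $f^{-1}(M\times\mathbb{T}^{m-1}\times[-R,R])$, solve Plateau-type problems with boundary in the class of $f^{-1}(M\times\mathbb{T}^{m-2}\times\{\theta\}\times\mathbb{R})$, and pass to a limit $\Sigma_1$. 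Two facts keep the limit nonempty. First, the intersection number with the dual circle is nonzero. Second, the slices $f^{-1}(M\times \mathbb{T}^{m-1}\times\{t\})$ are compact, so the area in each bounded slab is controlled by comparison with these compact slices.

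The limit $\Sigma_1$ is a complete noncompact area minimizer admitting a proper nonzero-degree map to $M\times\mathbb{T}^{m-2}\times\mathbb{R}$. By Theorem~\ref{spectral inheritance}(2), it carries nonnegative spectral $(1,m-1)$-intermediate curvature. We iterate with the weighted functionals $\mathcal{A}_{2\ell/(\ell+1)}(u_\ell)$; inequality \eqref{D-inequality} and the hypotheses on $(n,m)$ guarantee this is allowed. The result is a chain $\Sigma_{m-1}\subset\cdots\subset\Sigma_1\subset N$ in which $\Sigma_{m-1}$ maps properly with nonzero degree to $M^{n-m}\times\mathbb{R}$. In particular $\Sigma_{m-1}$ has two ends and satisfies $-\frac{2m-2}{m}\Delta u+\Ric\, u=0$ with $u>0$. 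The condition $m^2-mn+m+n>0$ is exactly $\frac{2m-2}{m}<\frac{4}{n-m}$, so Theorem~\ref{splitting of nonnegative spectral ric} gives $\Ric_{\Sigma_{m-1}}\ge0$ and $\Sigma_{m-1}=E\times\mathbb{R}$.

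The core of the proof is propagating rigidity back up the chain. At level $\ell=m-2$, every inequality in the proof of Theorem~\ref{spectral inheritance} must be an equality. The Cauchy--Schwarz step and the sharp algebraic inequality of Chen (or Remark~\ref{brendle's inequality}) then force three things: $\nabla^{\Sigma}u$ is parallel to $\nabla\phi$ appropriately, $u_{m-2}$ is constant along $\Sigma_{m-1}$, and $\Sigma_{m-1}$ is totally geodesic with $\Ric(\nu,\nu)=0$ and $C_2$ vanishing in the relevant directions. From this we want a spectral cylindrical splitting theorem, asserting that $\Sigma_{m-2}$ is isometric to $\Sigma_{m-1}\times\mathbb{S}^1$. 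The plan is a Bray--Brendle--Neves/Chodosh--Eichmair--Moraru foliation argument. We perturb $\Sigma_{m-1}$ through a foliation by constant weighted mean curvature leaves, built from the positive first eigenfunction of the (degenerate) Jacobi operator. We then show the weighted mean curvature of the leaves has a sign, and use the minimizing property to force every leaf to be minimizing as well. Since the leaves are noncompact, this needs uniform control at infinity: we will use the $\mathbb{R}$-factor of $\Sigma_{m-1}=E\times\mathbb{R}$ and the fact that the equality conditions make $u$ constant, which reduces the weighted problem to an unweighted one. This noncompact foliation step is the main obstacle. The fallback is to first prove that $\Sigma_{m-2}$ has $\Ric\ge0$ with a line, which lets Cheeger--Gromoll do the splitting directly.

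Once $\Sigma_{m-2}\cong E\times\mathbb{S}^1\times\mathbb{R}$ is flat in the torus and line directions, we repeat the argument upward through $\Sigma_{m-3},\dots,\Sigma_0=N$. At each stage the minimizer has constant weight, is totally geodesic, and is foliated by minimizing translates, which splits off another circle. The outcome is $N\cong E\times\mathbb{T}^{m-1}\times\mathbb{R}$ with the flat metric on $\mathbb{T}^{m-1}\times\mathbb{R}$. From $C^N_m\ge0$ on this product, choosing $e_1$ in the $E$-direction and the remaining frame vectors in the flat directions gives $\Ric_E\ge0$. Finally, $E$ is closed, because it is a properly embedded, nonzero-degree preimage of the compact $M$.
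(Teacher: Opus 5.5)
You have the skeleton right: the chain of weighted minimizers, the splitting $\Sigma_{m-1}=E\times\mathbb{R}$ via Theorem~\ref{splitting of nonnegative spectral ric} under $\frac{2m-2}{m}<\frac{4}{n-m}$, and the rigidity that $\Sigma_{m-1}$ is totally geodesic with $\nabla u=0$ along it. The last item is Claim 3 in the proof of Theorem~\ref{spectral splitting theorem of C_2}. Note, though, that ``every inequality must be an equality'' is not automatic on a noncompact hypersurface. You have to test stability with cutoffs $\phi_r(t)$, use the linear volume growth of $E\times\mathbb{R}$, and use a frame containing $\partial_t$ and $\nu$ so that $\Ric_\Sigma(\partial_t,\partial_t)=0$ enters.

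\textbf{The gap.} The step you call the main obstacle is the whole content of Theorem~\ref{spectral splitting theorem of C_2} and Proposition~\ref{rigidity of Cm in higher dimension}: passing from one rigid minimizing cylinder to a splitting of the ambient slice $\Sigma_{m-2}$, and then of $\Sigma_{m-3},\dots,N$. Your proposal does not supply it, and the foliation route fails as stated, for three reasons.
\begin{enumerate}
\item By the rigidity, the weighted Jacobi operator on $\Sigma_{m-1}=E\times\mathbb{R}$ is a constant multiple of $-\Delta_\Sigma$. Its spectrum is $[0,\infty)$, with no spectral gap and no $L^2$ first eigenfunction. So there is no implicit-function construction of a foliation by constant weighted mean curvature leaves.
\item The Bray--Brendle--Neves sign argument for the leaves' mean curvature integrates over the leaf, which is impossible with infinite area.
\item $\nabla u=0$ holds only on $\Sigma_{m-1}$, not on nearby leaves, so the weighted problem does not reduce to an unweighted one.
\end{enumerate}
Your fallback ($\Ric\ge 0$ plus Cheeger--Gromoll) is the paper's goal, but you give no mechanism for reaching it. The same assertion, ``foliated by minimizing translates'', is used without proof at every higher level.

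\textbf{What the paper does instead.} It follows Liu, Chodosh--Eichmair--Moraru and Hong--Wang.
\begin{enumerate}
\item Perturb the weight near a point $c(2r)$ off $\Sigma$ to $u_{r,t}$, with $u_{r,t}<u$ in a $3r$-ball and $-k\Delta u_{r,t}+C_2 u_{r,t}>0$ strictly on an annulus.
\item Minimize $\mathcal{A}_k(u_{r,t,h})$ in large regions of $N$ cut along $\Sigma$. Show the minimizers must enter the $3r$-ball, and, by the rigidity (which would force equality in the spectral inequality), that they cannot meet only the annulus.
\item Let $h\to\infty$ and $t\to 0$. This gives $\mathcal{A}_k(u)$-minimizing cylinders $\Sigma_r$, disjoint from $\Sigma$ and converging to it as $r\to0$.
\item Each $\Sigma_r$ is totally geodesic with $\nabla u=0$, so the normalized graph functions converge to a positive $f$ with $\nabla^2_\Sigma f(X,X)+R^N(X,\nu,X,\nu)=0$.
\item Differentiating $\nabla u=0$ across the family gives $\nabla^2u(\nu,\nu)=0$, hence $C_2^N=0$ and $\Ric_N(\nu,\nu)=0$ on $\Sigma$. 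So $f$ is harmonic, and constant by linear volume growth.
\item Therefore $R^N(X,\nu,X,\nu)=0$ and $\Ric_N\ge0$ along $\Sigma$. An open--closed argument puts every point on such a cylinder, so $\Ric_N\ge 0$ everywhere and Cheeger--Gromoll applies.
\end{enumerate}
Without this, or an equivalent way to produce nearby minimizing cylinders and analyze the resulting Jacobi field, your argument proves nothing above the bottom slice.
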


This theorem implies that $N=(M^{n-m}\times \mathbb{T}^m)\# X$ for an arbitrary complete manifold $X$ with nonnegative $m$-intermediate curvature must split. This is because a cyclic cover of $N$ admits a smooth proper map to $M^{n-m}\times \mathbb{T}^{m-1}\times \mathbb{R}$ with nonzero degree. It also implies that in the given range of $m,n$, such topological manifolds can not have positive $m$-intermediate curvature.

 We first prove the following result which is inspired by the works of Liu \cite{Liu-nonnegative-Ricci-curvature}, Chodosh-Eichmair-Moraru \cite{chodoshsplittingtheorem} and Hong-Wang \cite{hong-wang-splitting}.

\vskip.4cm
\begin{theorem}\label{spectral splitting theorem of C_2}
    Let $(N^{\ell+2},g)$ be an orientable complete noncompact Riemannian manifold with nonnegative spectral $(k,2)$-intermediate curvature, i.e., there exists a positive  function $u\in C^{2,\beta}(N)$ satisfying
    \[-k\Delta_N u+C_2^N u\geq 0.\] Assume that $M$ is an $\ell$-dimensional closed manifold and $N$ admits a smooth proper map $f:N\rightarrow M\times \mathbb{S}^1\times\mathbb{R}$ with nonzero degree and $\ell\geq1,k\geq 0, \ell+k<4$. Then up to scaling, $N$ is isometric to $E\times\mathbb{S}^1\times\mathbb{R}$ with the standard metric $g_E+d\theta^2+dt^2$ where $E$ is a closed manifold with nonnegative Ricci curvature.
\end{theorem}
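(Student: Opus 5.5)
We construct a weighted area-minimizing cylinder that wraps around the $\mathbb{S}^1$ factor, show it is infinitesimally rigid, and then spread the rigidity to all of $N$ by a foliation argument in the style of Liu and Chodosh--Eichmair--Moraru.

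\textbf{Step 1: existence of a minimizing cylinder.} Let $\theta$ be the $\mathbb{S}^1$ coordinate of $M\times\mathbb{S}^1\times\mathbb{R}$ and pull back the class of $M\times\{\theta_0\}\times\mathbb{R}$ via $f$. For $R\to\infty$ we minimize $\mA_k(S)=\int_S u^k$ among hypersurfaces in $f^{-1}(M\times\mathbb{S}^1\times[-R,R])$ homologous to $f^{-1}(M\times\{\theta_0\}\times[-R,R])$ relative to the boundary. Since $\dim N=\ell+2\le 5$, these minimizers are regular. We then pass to a limit $\Sigma$, which is a complete noncompact $\mA_k$-minimizing hypersurface with two ends and is mapped with nonzero degree onto $M\times\mathbb{R}$. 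The main obstacle is that the minimizers could drift off to infinity. We will rule this out using properness of $f$ together with a nontrivial intersection number: every competitor must intersect the fixed compact curve $f^{-1}(\{x_0\}\times\mathbb{S}^1\times\{0\})$, after a suitable cycle choice, so some point of each minimizer stays in a fixed compact set and local area bounds give a nonempty limit.

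\textbf{Step 2: rigidity of $\Sigma$.} We apply the computation in the proof of Theorem~\ref{spectral inheritance} with $m=2$. We use the Brendle--Hirsch--Johne inequality (Remark~\ref{brendle's inequality}) for the second-fundamental-form terms, combined with the $\frac{k-1}{k}H^2$ term and the Cauchy--Schwarz step. This yields a positive $v$ on $\Sigma$ with
\[-\tfrac{4}{4-k}\Delta_\Sigma v+\Ric_\Sigma v\ge 0.\]
The hypothesis $\ell+k<4$ is exactly $\frac{4}{4-k}<\frac{4}{\ell}=\frac{4}{\dim\Sigma-1}$, so Theorem~\ref{splitting of nonnegative spectral ric} applies because $\Sigma$ has two ends. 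Hence $\Sigma=E\times\mathbb{R}$ with $\Ric_E\ge0$, and the positive solution is constant. Tracing the equality cases back, every inequality in the stability argument is an equality: $A_\Sigma=0$, $\nabla^\Sigma u=0$, $\partial_\nu u=0$, and $-k\Delta_N u+C_2^N u=0$ with $C_2^N=\Ric_N(\nu,\nu)+C_1^\Sigma$-type terms all vanishing along $\Sigma$. In particular $\Sigma$ is totally geodesic and $\Ric_N(\nu,\nu)=0$.

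\textbf{Step 3: foliation.} Following Liu and Hong--Wang, we construct a local foliation $\{\Sigma_t\}$ near $\Sigma$, obtained as graphs over $\Sigma$ with constant weighted mean curvature $H_{\Sigma_t}+k\partial_\nu\log u=c(t)$. The noncompactness of $\Sigma$ is handled by working with the product structure $E\times\mathbb{R}$ and imposing bounded-geometry or decay conditions, or alternatively by producing the leaves directly as limits of minimizers in the same class passing through prescribed nearby points. We show each leaf is again $\mA_k$-minimizing, hence rigid as in Step~2. The route is a comparison argument: if some $c(t)$ were nonzero, one would obtain a competitor with strictly smaller weighted area, contradicting minimality. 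The normal speed then becomes constant, so a neighborhood of $\Sigma$ is isometric to $E\times\mathbb{R}\times(-\varepsilon,\varepsilon)$ with $u$ constant.

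An open-closed argument propagates this structure along the $\mathbb{S}^1$ direction, giving $N\cong E\times\mathbb{S}^1\times\mathbb{R}$ with a product metric whose circle factor has some length. After scaling, this is the standard metric $g_E+d\theta^2+dt^2$. Step~3, making the foliation work on a noncompact leaf, is the other delicate point. If direct construction is problematic, the fallback is the minimizer-through-points approach.
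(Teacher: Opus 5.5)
Your Steps 1--2 follow the paper's Claims 1--3. Your intersection-number argument with the compact curve $f^{-1}(\{x_0\}\times\mathbb{S}^1\times\{0\})$ is an acceptable substitute for the paper's separating slice $\Gamma_0$. Step 3, however, has a genuine gap, and it is exactly where the paper's main work lies.

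The constant weighted mean curvature foliation and the comparison argument both depend on the leaf being compact. Even granting Step 2, the linearization of $H+k\partial_\nu\log u$ at $\Sigma$ is $-\Delta_\Sigma$ on $E\times\mathbb{R}$, whose spectrum is $[0,\infty)$. It is not Fredholm on bounded H\"older spaces (already $\Delta_\Sigma w=1$ has no bounded solution), so the implicit function theorem with one free constant $c(t)$ does not produce leaves. You also have no bounded-geometry or decay information on $(N,g)$ that you could impose. The monotonicity $c'(t)\le 0$ and the conclusion $\mathcal{A}_k(\Sigma_t)\le\mathcal{A}_k(\Sigma)$ come from integrating over the whole leaf. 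Here both weighted areas are infinite and $\Sigma_t$ differs from $\Sigma$ outside every compact set, so it is not an admissible competitor and no contradiction with minimality arises. The works of Liu and Hong--Wang that you cite avoid such foliations for exactly this reason. Your fallback names the goal but not the mechanism: a hypersurface minimizing $\mathcal{A}_k(u)$ subject to passing through a prescribed point is not minimizing or even stable, so Step 2 cannot be applied to it.

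The missing idea is the weight perturbation the paper uses, following Liu, Chodosh--Eichmair--Moraru and Hong--Wang.
\begin{enumerate}
\item Cut $N$ along $\Sigma$ and take the normal geodesic $c$ from a point of $\Sigma$. Replace $u$ by $u_{r,t}\le u$ that is strictly smaller on $B_{3r}(c(2r))$, equals $u$ outside it, satisfies $-k\Delta u_{r,t}+C_2^N u_{r,t}>0$ on the annulus $r<d(\cdot,c(2r))<3r$, and keeps $\Sigma$ a barrier.
\item Minimizers of the perturbed functional with boundary $\partial\Sigma_h$ must enter $B_{3r}(c(2r))$; otherwise $\Sigma_h$ would be cheaper.
\item Their limits as $h\to\infty$, if they avoid the inner ball, are rigid by Step 2 and so cannot cross the annulus where the inequality is strict. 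Letting $t\to 0$ then gives genuine $\mathcal{A}_k(u)$-minimizers $\Sigma_r\neq\Sigma$, disjoint from $\Sigma$ and within $O(r)$ of it. Each is a totally geodesic product with $\nabla u=0$ along it.
\item As $r\to 0$ these converge to $\Sigma$ from one side and give a positive $w$ on $\Sigma$ with $\nabla^2_\Sigma w(X,X)+R^N(X,\nu,X,\nu)w=0$. Linear volume growth forces $w$ to be constant, hence $\Ric_N\ge 0$ along $\Sigma$. An open--closed argument plus Cheeger--Gromoll finishes.
\end{enumerate}

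Two inaccuracies in your Step 2 matter for this.
\begin{itemize}
\item The Brendle--Hirsch--Johne inequality is stated for minimal hypersurfaces, whereas $H_\Sigma=-k\partial_\nu\log u$. You need Chen's inequality with $D(\ell+2,2)=\frac{4-\ell}{4}>\frac{k-1}{k}$, i.e.\ $\ell k<4$, which follows from $\ell+k<4$. This strictness is what forces $H_\Sigma=0$, and only then does the Brendle--Hirsch--Johne equality case give $A_\Sigma=0$.
\item For $k>0$ the equality cases give only $k\nabla^2_N u(\nu,\nu)=\Ric_N(\nu,\nu)\,u$ on $\Sigma$, not $\Ric_N(\nu,\nu)=0$. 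The paper obtains $\nabla^2_N u(\nu,\nu)=0$, hence $\Ric_N(\nu,\nu)=0$ and $\Delta_\Sigma w=0$, only from $\nabla u=0$ along every $\Sigma_r$.
\end{itemize}
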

\begin{proof}
We split the proof into several claims. Let $d\theta$ be the normalized one-form on $\mathbb{S}^1$. 
\vskip.2cm
\textbf{Claim 1:} $N$ contains a complete noncompact $\mathcal{A}_k(u)$-minimizing hypersurface $\Sigma$ such that there exists a smooth proper map from $\Sigma$ to $M\times\mathbb{R}$ with nonzero degree.
\vskip.1cm
    First, it is readily to see that $N$ has at least two ends due to properness of the map. Denote 
    $$\pi_1:M\times \mathbb{S}^1\times \mathbb{R}\rightarrow M\times \mathbb{S}^1\ \ \text{and}\ \ \pi_2:M\times \mathbb{S}^1\times \mathbb{R}\rightarrow  \mathbb{R}.$$ 
    
    For generic value $0\geq h\in \mathbb{R}$, $\Gamma_{ h}:=(\pi_2\circ f)^{-1}( h)$ and $\Gamma_{- h}:=(\pi_2\circ f)^{-1}(-h)$ are closed embedded separable hypersurface in $M$
    with $(\pi_1\circ f)_*([\Gamma_{\pm h}])=(\text{deg} f)[M\times \mathbb{S}^1]\in H_{\ell}(M\times \mathbb{S}^{1})$. We can pick up some connected component of $\Gamma_h$, still denoted by $\Gamma_h$, such that 
    \begin{equation}\label{topology of the boundary}
        (\pi_1\circ f)_*([\Gamma_h])=c[M\times \mathbb{S}^1], \ \ \ c\neq 0.
    \end{equation}
    We do the same thing to $\Gamma_{-h}$.  For convenience, we denote $\tilde{\pi}:M\times \mathbb{S}^{1}\times \mathbb{R}\rightarrow M\times \mathbb{R}$. This is an element $[\tilde{\Gamma}_h]=[\Gamma_{h}]\frown [f^{*}d\theta]\in H_{\ell}(N)$ satisfying
    \begin{equation}\label{topology of the boundary 1}
         (\pi_{1}\circ f)_{*}([\tilde{\Gamma}_{h}])=c[M]\in H_{\ell}(M\times \mathbb{S}^{1};\mathbb{Z}).
    \end{equation}
    Moreover, $(\tilde{\pi}\circ \pi_{1}\circ f)_{*}([\tilde{\Gamma}_{h}])=c[M]\in H_{\ell}(M,\mathbb{Z})$. In particular, $[\tilde\Gamma_{h}]\neq 0\in H_{\ell}(N,\mathbb{Z})$ and $(\tilde{\pi}\circ f)_{*}([\tilde{\Gamma}_{h}])\neq 0\in H_{\ell}(M\times \mathbb{R})$. We can also find $\tilde{\Gamma}_{-h}.$
    
     
     Let $\Omega_h$ be a compact set of $N$ that contains $\Gamma_0$ and $\Gamma_{\pm h}$. We would like to minimize the functional $\mathcal{A}_k(u)$ among all compact hypersurfaces whose boundaries are $\Gamma_{\pm h}$. We can perturb the metric near $\partial\Omega_h$ such that the mean curvature is positive with respect to the outer normal vector and the metric $\tilde{g}=u^{2k/\ell+1}g.$ Thus there exists a compact smooth, two-sided, embedded $\mathcal{A}_k(u)$-minimizing hypersurface for each $h>0$, which we call $\Sigma_h$. We claim that $\Sigma_h$ is connected. Indeed, if not, then there exists $(\ell+1)$-cell spanning $\Gamma_h$ or $\Gamma_{-h}$, which is a contradiction to \eqref{topology of the boundary 1}. On the other hand, since $\Gamma_0$ separates $\Gamma_h$ and $\Gamma_{-h}$, we know that for any $h>0$, $\Sigma_h$ must intersect $\Gamma_0$.

     For a real number $a>0$, denote $K_{a}=f^{-1}(M\times \mathbb{S}^{1}\times [-a,a])$.
     It follows from  the connectedness of $\Sigma_{h}$ that for $a\in[-h,h]$, we have 
     \begin{equation}\label{slicing has same homology}
         [\Sigma_{h}\cap (\pi_{2}\circ f)^{-1}(a)]=[\tilde{\Gamma}_{h}]\in H_{\ell}(\Sigma_{h},\mathbb{Z}).
     \end{equation}
     Now we claim that the map $$\tilde{\pi}\circ f:\Sigma_{h}\cap K_{a}\rightarrow M\times \mathbb{S}^{1}\times [-a,a]\rightarrow M\times [-a,a]$$ has nonzero degree, that is, $$(\tilde{\pi}\circ f)_{*}([\Sigma_{h}\cap K_{a}])=\tilde{c}[M\times [-a,a]]\in H_{\ell+1}(M\times [-a,a],M\times \{-a,a\}),\ \tilde{c}\neq 0.$$ On the contrary suppose $\tilde{c}=0$, then $$\partial ((\tilde{\pi}\circ f)_{*}([\Sigma_{h}\cap K_{a}]))=(\tilde{\pi}\circ f)_{*}([\Sigma_{h}\cap (\pi_{2}\circ f)^{-1}(a)])-(\tilde{\pi}\circ f)_{*}([\Sigma_{h}\cap (\pi_{2}\circ f)^{-1}(-a)])$$
     is trivial. Therefore, $(\tilde{\pi}\circ f)_{*}([\Sigma_{h}\cap (\pi_{2}\circ f)^{-1}(a)])=0\in H_{\ell}(M\times \{a\},\mathbb{Z})$. In particular, this yields $(\tilde{\pi}\circ f)_{*}([\Sigma_{h}\cap (\pi_{2}\circ f)^{-1}(a)])=0\in H_{\ell}(M\times \mathbb{R},\mathbb{Z})$ which is a contradiction to \eqref{topology of the boundary 1} and \eqref{slicing has same homology}. 
     
  Using curvature estimates, a subsequence of $\Sigma_h$ converges to an oriented, two-sided, embedded hypersurface $\Sigma$ that is absolutely $\mathcal{A}_k(u)$-minimizing. Moreover, for any fixed $ a>0$, $\Sigma_{h}\cap K_{a}$ graphically and smoothly converge to $\Sigma\cap K_{a}$ as $h\rightarrow \infty$. In particular, there are no closed components in $\Sigma.$ Combining with fact that the map $\tilde{\pi}\circ f:\Sigma_{h}\cap K_{a}\rightarrow M\times [-a,a]$ has nonzero degree, we obtain $(\tilde{\pi}\circ f)|_{K_{a}}:\Sigma\cap K_{a}\rightarrow M\times [-a,a]$ has nonzero degree. Taking $a$ to infinity, we obtain $\tilde{\pi}\circ f: \Sigma\rightarrow M\times \mathbb{R}$ has nonzero degree. 
  Note that $\Sigma$ need not be connected, since the functional $\mathcal{A}_k(u)$ can become arbitrarily small along certain ends of $N$, allowing the limit of hypersurface to break. However, we can take one component of $\Sigma$ which admits a smooth proper map into $M \times \mathbb{R}$ of nonzero degree.

\vskip.2cm
\textbf{Claim 2:} $\Sigma$ is isometric to a product manifold $(E\times \mathbb{R},g_E+dt^2)$ where $E$ is closed $\ell$-manifold and has nonnegative Ricci curvature. 
\vskip.1cm
    Indeed, since $\Sigma$ is $\mathcal{A}_k(u)$-minimizing, according to Theorem \ref{spectral inheritance}, $\Sigma$ admits nonnegative spectral $(\frac{4}{4-k},1)$-intermediate curvature. That is, there exists a positive function $v$ on $\Sigma$ such that
    \[-\frac{4}{4-k}\Delta_\Sigma v+\Ric_\Sigma v=0.\]
    Note the dimension of $\Sigma$ is $\ell+1$. Then the claim follows from Theorem \ref{splitting of nonnegative spectral ric} and the assumption $\ell+k<4$.

\vskip.2cm
    \textbf{Claim 3:} $\Sigma$ is totally geodesic. Moreover, $|\nabla^N u|=0$ and $-k\Delta_N u+C_2^Nu= 0$ along $\Sigma$.
\vskip.1cm

By \textbf{Claim 2} we know $\Ric_\Sigma(\partial_t,\partial_t)=0$ and $\Sigma$ has linear volume growth. For every $r>0$ let $\psi_r$ be a cutoff function on $\mathbb{R}$ such that $\psi_r\equiv 1$ on $[-r,r]$, $\psi_r\equiv 0$ on $\mathbb{R}\setminus [-2r,2r]$, and linear on  $[-2r,-r]\cup [r,2r]$. Let $\phi_r(x,t):=\psi_r(t)$ be defined on $E\times \mathbb{R}$. Let $\partial_t$ be the unit tangent vector in the $\mathbb{R}$-direction. Plugging $\phi_r$ into the $\mathcal{A}_k(u)$-stability inequality,
\begin{equation*}
    \begin{split}
        \int_\Sigma |\nabla^\Sigma \phi_r|^2&\geq \int_\Sigma (|A_\Sigma|^2+\Ric_N(\nu,\nu)-ku^{-1}\Delta_N u-\frac{k-1}{k}H_{\Sigma}^2)\phi_r^2\\
        & +\int_\Sigma (k-\frac{k^2}{4})\phi_r^2|\nabla^\Sigma \log u|^2-k\phi_r\nabla^\Sigma\phi_r\cdot\nabla^\Sigma\log u.
    \end{split}
    \end{equation*}
    By Cauchy-Schwarz inequality,
\begin{equation}\label{rigidity inequality}
    \begin{split}
        \frac{4}{4-k}\int_\Sigma |\nabla^\Sigma \phi_r|^2&\geq \int_\Sigma (|A_\Sigma|^2+\Ric_N(\nu,\nu)-ku^{-1}\Delta_N u-\frac{k-1}{k}H_{\Sigma}^2)\phi_r^2\\
        &\geq \int_\Sigma (|A_\Sigma|^2+\Ric_N(\nu,\nu)-C_2^N u-\frac{k-1}{k}H_{\Sigma}^2)\phi_r^2\\
        &\geq \int_\Sigma (|A_\Sigma|^2+\Ric_N(\nu,\nu)-C_2^N(\partial_t,\nu) u-\frac{k-1}{k}H_{\Sigma}^2)\phi_r^2.
    \end{split}
    \end{equation}
    Let $\{e_1=\nu,e_2=\partial_t,e_3,\cdots,e_{\ell+2}\}$ be a local orthonormal frame. By Gauss equation, $$0=\Ric_\Sigma(\partial_t,\partial_t)=\Ric_N(\partial_t,\partial_t)-R^N(\nu,\partial_t,\nu,\partial_t)+\sum_{i=2}^{\ell+2}(h_{ii}h_{22}-h_{i2}^2)$$
    and
    $$C_2^N(\partial_t,\nu)=\Ric_N(\nu,\nu)+\Ric_N(\partial_t,\partial_t)-R^N(\nu,\partial_t,\nu,\partial_t).$$
    Thus
    \begin{align*}
         \frac{4}{4-k}\int_\Sigma |\nabla^\Sigma \phi_r|^2&\geq \int_\Sigma (|A_\Sigma|^2+\sum_{i=2}^{\ell+2}(h_{ii}h_{22}-h_{i2}^2)-\frac{k-1}{k}H_{\Sigma}^2)\phi_r^2\\
         &\geq \int_\Sigma (D(\ell+2,2)-\frac{k-1}{k})H_{\Sigma}^2\phi_r^2\\
         & \geq \int_\Sigma (\frac{4-\ell}{4}-\frac{k-1}{k})H_{\Sigma}^2\phi_r^2.
    \end{align*}
   Letting $r \rightarrow \infty$, we obtain that $\Sigma$ is totally geodesic, $\nabla^N_\nu u= 0$, $\nabla^\Sigma u= 0$ along $\Sigma$. Moreover, $-k\Delta_N u+C_2^N(\partial_t,\nu)u= 0$ along $\Sigma$.   
    \vskip.2cm

The remaining goal is to show that for any point $p\in N$, we have
\[\Ric_N\geq 0.\]
    By \textbf{Claim 3}, $\Sigma$ splits to $E\times \mathbb{R}$ with product metric. We cut $\Sigma$ from $N$ to get a new $(\ell+2)$-manifold $\hat{N}$ with two boundary components both isometric to $\Sigma$ and with the metric $\hat{g}$.  We fix a boundary component $\Sigma$. Let $h > 0$ be a real number. Denote by $\Sigma_h = E \times [-h, h]$ the truncated part of $\Sigma$. Denote the closed manifold on $\Sigma$ that corresponds to $E\times \{0\}$ by $E_0$. Let $\nu$ be the inner unit normal vector field of $\Sigma$ in $\hat{N}.$
    
    Fix a unit speed geodesic $c:[0,\varepsilon)\rightarrow \hat{N}$ with $c(0)\in E_0$ and $\dot{c}(0)$ is perpendicular to $\Sigma$. By standard perturbation, we can construct a family of positive functions \( u_{r,t} \) (see [HW25, Lemma 3.3] for the construction) with the following properties:
\begin{enumerate}[(a)]
\item \( u_{r,t} \to u \) in \( C^3 \) as \( t, r \to 0 \);
\item \( u_{r,t} \to u \) smoothly as \( t \to 0 \) for \( r \in (0, \varepsilon) \) fixed;
\item \( u_{r,t} = u \) on \( \{ x \in \hat{N} : \operatorname{dist}_{\hat{g}}(x, c(2r)) \ge 3r \} \);
\item \( u_{r,t} < u \) on \( \{ x \in \hat{N} : \operatorname{dist}_{\hat{g}}(x, c(2r)) < 3r \} \);
\item \( -k\Delta_{\hat{g}} u_{r,t} +  C_2^N u_{r,t} > 0 \) on \( \{ x \in \hat{N} : r < \operatorname{dist}_{\hat{g}}(x, c(2r)) < 3r \} \);
\item \( H_\Sigma+\partial_\nu u_{r,t} \ge 0 \).
\end{enumerate}
We further construct new smooth positive functions \( u_{r,t,h} \) for any \( h > 0 \) by requiring

\begin{enumerate}[(i)]
\item \( u_{r,t,h} = u_{r,t} \) in \( \{ x \in \hat{N} : \operatorname{dist}_g(x, \Sigma_{h+1})) \leq 2h \} \),

\item \( u_{r,t,h} \geq u_{r,t} \) in \( \hat{N} \setminus \{ x \in \hat{N} : \operatorname{dist}_{\hat{g}}(x, \Sigma_{h+1}) \leq 2h \} \),

\item \( u_{r,t,h} \geq \max\{1, u_{r,t}\} \) in \( \{ x \in \hat{N} : \operatorname{dist}_g(x, \Sigma_{h+1}) \geq 2h +1 \}\).

\item \(H_\Sigma+\partial_\nu u_{r,t,h} \ge 0 \) in $\Sigma_{h+1}.$
\end{enumerate}

We consider all compact, oriented hypersurfaces in \( \{ x \in \hat{N} : \operatorname{dist}_g(x, \Sigma_{h+1})) \leq 2h \} \) that has the same boundary as $\Sigma_h$ and is homologous to $\Sigma_h$, there is a smooth oriented, two-sided, embedded $\mathcal{A}_k(u_{r,t,h})$-minimizer, which we denote by $\Sigma_{r,t,h}$. The condition (iv) ensures no interior touching and regularity of the minimizer.

\vskip.2cm
\textbf{Claim 4:} $\Sigma_{r,t,h}$ always intersects \( \{ x \in \hat{N} : \operatorname{dist}_{\hat{g}}(x, c(2r)) < 3r \} \) for any $h>0$.
\vskip.1cm
    This can be proved same as in \cite[equation (3.4)]{hong-wang-splitting}.
\vskip.2cm

 Similar to \textbf{Claim 1}, there exists a subsequence of $\Sigma_{r,t,h}$ converges locally and smoothly to a complete hypersurface $\tilde{\Sigma}_{r,t}$ as $h\rightarrow \infty$. This is absolutely
 $\mathcal{A}_k(u_{r,t})$-minimizing since $u_{r,t,h}\rightarrow u_{r,t}$ as $h\rightarrow \infty.$ Moreover, it admits nonzero degree map to $M\times \mathbb{R}$.  Since $\tilde{\Sigma}_{r,t}$ intersects with \( \{ x \in \hat{N} : \operatorname{dist}_{\hat{g}}(x, c(2r)) \leq  3r \} \), we denote $\Sigma_{r,t}$ the unique component of $\Sigma_{r,t,h}$ that intersects with \( \{ x \in \hat{N} : \operatorname{dist}_{\hat{g}}(x, c(2r)) \leq 3r \} \). Again $\Sigma_{r,t}$ is absolutely
 $\mathcal{A}_k(u_{r,t})$-minimizing and admits nonzero degree map to $M\times \mathbb{R}$. 
\vskip.2cm

\vskip.2cm
\textbf{Claim 5:} For any $r,t>0$ small enough, $\Sigma_{r,t}$ must either intersect \( \{ x \in \hat{N} : \operatorname{dist}_{\hat{g}}(x, c(2r)) \leq  2r \} \)  or intersect \( \{ x \in \hat{N} : \operatorname{dist}_{\hat{g}}(x, c(2r)) =  3r \} \) but not \( \{ x \in \hat{N} : \operatorname{dist}_{\hat{g}}(x, c(2r)) < 3r \} \). 
\vskip.1cm

If not, according to the construction of $u_{r,t}$, we always have $-k\Delta_{\hat{g}} u_{r,t}+C_2^N u_{r,t}\geq 0$ on $\hat{M}$. However, since $\Sigma_{r,t}$ is $\mathcal{A}_{k}(u_{r,t})$-minimizing, we obtain $-k\Delta_{\hat{g}} u_{r,t}+C_2^N u_{r,t}=0 $ along $\Sigma_{r,t}$ by \textbf{Claim 2}. This is contradiction to $(e)$ in the construction of $u_{r,t}$, thus proving the claim.

\vskip.2cm
 Fix $r>0$ small, as $t\rightarrow 0$, there is a subsequence of $\Sigma_{r,t}$ that converges to a embedded hypersurface, denoted by $\Sigma_r$, that intersects with \( \{ x \in \hat{N} : \operatorname{dist}_{\hat{g}}(x, c(2r)) \leq 3r \} \) and that is disjoint from $\Sigma$ by maximum principle. Also $\Sigma_r$ is absolutely $\mathcal{A}_k(u)$-minimizing. We know that for $r$ sufficiently small, $\Sigma_r$ admits a nonzero degree smooth proper map to $M\times \mathbb{R}$ (so it has at least two ends). By \textbf{Claim 2} and \textbf{Claim 3}, $\Sigma_r$ is product manifold, totally geodesic, and the normal Ricci curvature vanishes along $\Sigma_r$. Since $\Sigma_r$ converges to $\Sigma$ as $r\rightarrow 0$, by differentiating the second fundamental form we find that the positive variational speed function $f\in C^\infty(\Sigma)$ satisfies \begin{equation}\label{equation coming from sequence of totally geodesic surfaces}
0=\nabla_\Sigma^2f(X,X)+R^N(X,\nu,X,\nu)
 \end{equation}
 where $X$ is tangential to $\Sigma$.  Here $f$ is positive since the sequence of hypersurfaces converges from one side. In fact, one can write the sequence over $\Sigma$ by the graph function $f_i$, normalize $f_i$ at one point and apply Harnack estimate and Schauder estimates to get the limit function $f$. 

\vskip.2cm
\textbf{Claim 6:} $\Delta_\Sigma f=0$.
\vskip.1cm
This follows by taking the trace of equation \eqref{equation coming from sequence of totally geodesic surfaces} if we can show that $\Ric_N(\nu,\nu)$ along $\Sigma.$ From \text{Claim 3}, we know 
  \[|\nabla^N u|=0 \ \text{and}\ -k\Delta_N u+C_2^N u=0\]
  along the sequence of totally geodesic hypersurfaces $\Sigma_r$. For any point $x\in \Sigma$, consider an arclength parametrizied curve $\gamma(t):[0,\epsilon)\rightarrow N$ such that $\gamma(0)=x$ and $\gamma'(0)=\nu_x$. Let $t_r$ be  positive numbers  such that $\gamma(t_r)=\gamma(t)\cap\Sigma_r.$ We know
  \[u(\gamma(t))'|_{t=t_r}=0.\]
  Then
  \[u(\gamma(t))''|_{t=0}=0.\]
  That is
  \[((\nabla_N^2(\gamma(t)))(\dot{\gamma}(t),\dot{\gamma}(t))+\nabla_N u(\gamma(t))\cdot \ddot{\gamma}(t))|_{t=0}=0.\]
  Thus
  \[(\nabla_N^2u)(\nu,\nu)=0.\]
  It follows that
  \[\Delta_N u=(\nabla_N^2 u)(\nu,\nu)+\Delta_\Sigma u+ u_\nu H=0,\]
  so $C_2^N=0$ along $\Sigma$. By the proof of \textbf{Claim 2}, we know that $\Ric_N(\nu,\nu)=0.$ This completes the proof of the claim.

  \vskip.2cm

  \textbf{Claim 7:} For any point $p\in \Sigma,$ we have $\Ric_N(\cdot,\cdot)\geq 0$.
  \vskip.1cm
  Since $\Ric_N(\nu,\nu)\geq 0$ on $\Sigma$, it suffices to show $\Ric_N$ is nonnegative along tangential directions.
  Note that $\Sigma$ has linear volume growth. Applying integration by parts we obtain
 \begin{align*}
     \int_\Sigma \varphi^2|\nabla \log f|^2&=\int_\Sigma \varphi^2\Delta \log f\\&=-\int_\Sigma 2\varphi\nabla\varphi\nabla \log f\\&\leq \epsilon \int_\Sigma \varphi^2|\nabla\log f|^2+\frac{1}{\epsilon}\int_\Sigma |\nabla \varphi|^2
 \end{align*}
 and
 \[\int_\Sigma \varphi^2|\nabla \log f|^2\leq C\int_\Sigma |\nabla \varphi|^2.\]
 Choosing standard cutoff function $\varphi$ yields $|\nabla \log f|=0.$ Hence $f$ is constant function. 
 Then by \eqref{equation coming from sequence of totally geodesic surfaces}, $R^N(X,\nu,X,\nu)=0$ for any vector $X$ tangential to $\Sigma$. The claim holds.

\vskip.2cm
\textbf{Claim 8:} For any point $q\in N$, there is a $\mathcal{A}_k(u)$-minimizing cylinder passing through $p$. 
\vskip.1cm
Let $\Omega\subset M$ consist of all points in $N$ such that there is an $\mathcal{A}_k(u)$-minimizing cylinder passing through $p$. It is clear that $\Omega$ is nonempty closed subset by the compactness of $\mathcal{A}_k(u)$-minimizing cylinders. If $\Omega\neq N$, we can find any point $q\in N\setminus \Omega$. Let $q_0$ be the nearest point to $q$ in $\Omega$. Let $\Sigma$ be the cylinder passing through $q_0$. We can apply the above construction to obtain a new cylinder that is closer to $q$, contradicting the definition of $q_0.$ 
\vskip.2cm
Thus we have achieved the goal that $\Ric_N\geq 0$ by \textbf{Claim 7}, \textbf{Claim 8}, and then the theorem follows from Cheeger-Gromoll splitting theorem.
\end{proof}

\begin{remark}
    Recently, Chai-Sun \cite[Theorem 1.3]{chai-sun-spectralscalarsplitting} generalize the splitting theorem of Chodosh-Eichmair-Moraru \cite{chodoshsplittingtheorem} to the setting of spectral scalar curvature.
\end{remark}

\begin{remark}
    As mentioned in the introduction, it is still an interesting question to prove the following generalization of splitting theorem of Chodosh-Eichmair-Moraru \cite{chodoshsplittingtheorem}: Let $(N,g)$ be an orientable complete noncompact Riemannian manifold with nonnegative spectral $(k,2)$-intermediate curvature. Suppose that $N$ contains a properly embedded, absolutely $\mathcal{A}_k(u)$-minimizing hypersurface that has at least two ends. Then $N$ is isometrically covered by $E\times \mathbb{R}^2$.
\end{remark}

To extend above cylindrical splitting to higher dimensions, we need a rigidity result in higher dimensional case.
\begin{proposition}\label{rigidity of Cm in higher dimension}
   Fix integers $7\geq n>m>\ell\geq 0$. Let $N$ be $(n-\ell)$-dimensional Riemannian manifold of nonnegative spectral $(\frac{2\ell}{\ell+1},m-\ell)$-intermediate curvature. Assume that $\Sigma=(E^{n-m}\times \mathbb{T}^{m-\ell-2}\times \mathbb{R}, g_E+d\theta_1^2+\cdots+d\theta_{m-\ell-2}^2+dt^2)$, $E$ is closed, is a hypersurface in $N$ that minimizes $\mathcal{A}_{2\ell/(\ell+1)}(u)$. Then $\Sigma$ is totally geodesic. Moreover, $|\nabla^N u|=0$ and $-\frac{2\ell}{\ell+1}\Delta_N u+C_{m-\ell}^Nu= 0$ along $\Sigma$.
\end{proposition}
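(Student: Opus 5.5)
The plan is to imitate Claim 3 in the proof of Theorem \ref{spectral splitting theorem of C_2}. Set $k=\frac{2\ell}{\ell+1}$, so $k<2<4$ and $\frac{4}{4-k}=\frac{2\ell+2}{\ell+2}$. The ambient dimension is $n-\ell$ and $\Sigma$ has dimension $n-\ell-1$.

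\textbf{Step 1: test functions.} Since $\Sigma=E\times\mathbb{T}^{m-\ell-2}\times\mathbb{R}$ carries a product metric with $E$ closed, it has linear volume growth. I will use the cutoff $\phi_r(x,\theta,t)=\psi_r(t)$, with $\psi_r$ the piecewise linear cutoff from Claim 3. Then $\int_\Sigma|\nabla^\Sigma\phi_r|^2\le C/r\to 0$.

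\textbf{Step 2: stability inequality.} Plug $\phi_r$ into the $\mathcal{A}_k(u)$-stability inequality \eqref{A_k-stability inequality} and absorb the cross term by Cauchy--Schwarz with $\epsilon=k-k^2/4$. This leaves the nonnegative remainder term $(k-\tfrac{k^2}{4}-\epsilon')\phi_r^2|\nabla^\Sigma\log u|^2$, which must be kept rather than discarded, exactly as in \eqref{rigidity inequality}. Next use $-k\Delta_N u+C^N_{m-\ell}u\ge 0$, i.e. $k u^{-1}\Delta_N u\le C^N_{m-\ell}$, and evaluate $C^N_{m-\ell}$ at the specific frame $\{\nu,\partial_{\theta_1},\dots,\partial_{\theta_{m-\ell-2}},\partial_t\}$, which has exactly $m-\ell-1+1=m-\ell$ vectors. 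The product structure gives that all these flat directions have vanishing intrinsic curvature terms: $R^\Sigma(e_p,e_q,e_p,e_q)=0$ whenever $e_p$ is a flat direction. The Gauss equation then writes $C^N_{m-\ell}(\nu,\partial_{\theta_i},\partial_t)-\Ric_N(\nu,\nu)$ as $-\sum_{p\ \mathrm{flat}}\sum_{q>p}(h_{pp}h_{qq}-h_{pq}^2)$, mirroring the computation with $\Ric_\Sigma(\partial_t,\partial_t)=0$ in Claim 3. This yields
\[
\frac{4}{4-k}\int_\Sigma|\nabla\phi_r|^2\ge\int_\Sigma\Big(|A|^2+\sum_{p=2}^{m-\ell}\sum_{q>p}(h_{pp}h_{qq}-h_{pq}^2)-\tfrac{k-1}{k}H^2\Big)\phi_r^2+c\int_\Sigma\phi_r^2|\nabla^\Sigma\log u|^2+\int_\Sigma\Big(C^N_{m-\ell}-\tfrac{k\Delta_N u}{u}\Big)\phi_r^2 .
\]

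\textbf{Step 3: the algebraic bound (main obstacle).} Apply Chen's inequality to the bracket, which gives at least $D(n-\ell,m-\ell)H^2$. The main obstacle is that we need the strict inequality $D(n-\ell,m-\ell)>\frac{k-1}{k}=\frac{\ell-1}{2\ell}$, whereas \eqref{D-inequality} only records $\ge$. I would verify strictness by case-checking over the admissible $(n,m,\ell)$, as the paper does for \eqref{D-inequality}. If equality occurs in some case, I would instead use the equality characterization: equality in Chen's bound, combined with $H=-k\nabla_\nu\log u$ and Remark \ref{brendle's inequality}, should still force $A=0$. Alternatively, keep a $\delta|A|^2$ slack from the Cauchy--Schwarz step.

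\textbf{Step 4: conclusion.} Let $r\to\infty$. All integrands are nonnegative, so each vanishes identically. This gives $H=0$ and in fact $A\equiv0$, from the positive-definite quadratic form, which also includes a $|\mathring A|^2$ part. It also gives $\nabla^\Sigma u=0$, and $\nabla_\nu u=-u H/k=0$, so $|\nabla^N u|=0$ on $\Sigma$. Finally $C^N_{m-\ell}u=k\Delta_N u$ holds along $\Sigma$ for the chosen frame; combined with the hypothesis $-k\Delta_N u+C^N_{m-\ell}u\ge0$ and the minimality of $C^N_{m-\ell}$, this gives $-\frac{2\ell}{\ell+1}\Delta_N u+C^N_{m-\ell}u=0$ along $\Sigma$.
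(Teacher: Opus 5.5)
Your proposal is correct and follows the paper's proof (Case 1). Both use the same cutoff $\phi_r=\psi_r(t)$, the $\mathcal{A}_k(u)$-stability inequality evaluated on the frame $\{\nu,\partial_t,\partial_{\theta_i}\}$ through the Gauss equation, Chen's bound, and the limit $r\to\infty$; your retained $|\nabla^\Sigma\log u|^2$ remainder and your appeal to the equality case of Remark \ref{brendle's inequality} supply the justifications for $\nabla^\Sigma u=0$ and $A_\Sigma\equiv 0$ that the paper leaves implicit.

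The strict inequality you flag is genuinely needed: the paper gets its $\frac{1}{2\ell}H_\Sigma^2$ margin only by writing $\frac{\ell-2}{2\ell}$ where $\frac{\ell-1}{2\ell}$ belongs. It holds at once, because wherever the proposition is used with $\ell\ge 1$ one has $n-m\in\{1,2\}$, so $D(n-\ell,m-\ell)\ge\frac12>\frac{\ell-1}{2\ell}$. You should still treat $\ell=0$ separately, as the paper does, via ordinary stability with $H_\Sigma=0$ and Remark \ref{brendle's inequality}: there $\frac{k-1}{k}$ is undefined and no information about $u$ is obtained.
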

\begin{proof}
    Let $\partial_t$, $\partial_{\theta_i}$ for $i=1,\cdots,m-\ell-2$   be the unit tangent vectors in the $\mathbb{R}$-direction and $\mathbb{S}^1$-direction, respectively. Note that the sectional curvatures of $\Sigma$ satisfy $R^\Sigma(\partial_t,X,\partial_t,X)=0$and $R^\Sigma(\partial_{\theta_i},X,\partial_{\theta_i},X)=0$ for any tangential vector $X$ on $\Sigma$. For every $r>0$, let $\psi_r$ be a cutoff function on $\mathbb{R}$ such that $\psi_r\equiv 1$ on $[-r,r]$, $\psi_r\equiv 0$ on $\mathbb{R}\setminus [-2r,2r]$, and linear on  $[-2r,-r]\cup [r,2r]$. Let $\phi_r(x,\theta_1,\cdots,\theta_{m-\ell-2},t):=\psi_r(t)$, $x\in E$, be a cutoff function on $\Sigma$.   

\vskip.2cm
    \textbf{Case 1:} $\ell>0$.
    Plugging $\phi_r$ into the $\mathcal{A}_{2\ell/(\ell+1)}(u)$-stability inequality and using \eqref{rigidity inequality}, we have
    \begin{align*}
        \frac{2\ell+2}{\ell+2}\int_\Sigma |\nabla^\Sigma \phi_r|^2&\geq \int_\Sigma (|A_\Sigma|^2+\Ric_N(\nu,\nu)-\frac{2\ell}{\ell+1}u^{-1}\Delta_N u-\frac{\ell-1}{2\ell}H_{\Sigma}^2)\phi_r^2\\
        &\geq \int_\Sigma (|A_\Sigma|^2+\Ric_N(\nu,\nu)-C_{m-\ell}^N u-\frac{\ell-1}{2\ell}H_{\Sigma}^2)\phi_r^2\\
        &\geq \int_\Sigma (|A_\Sigma|^2+\Ric_N(\nu,\nu)-C_{m-\ell}^N(\partial_t,\partial_{\theta_1},\cdots,\partial_{\theta_{m-\ell-2}},\nu) u)\phi_r^2\\
        &-\int_\Sigma\frac{\ell-1}{2\ell}H_{\Sigma}^2\phi_r^2.
    \end{align*}
    Let $\{e_1=\nu,e_2=\partial_t,e_3=\partial_{\theta_1},\cdots,e_{m-\ell}=\partial_{\theta_{m-\ell-2}},e_{m-\ell+1},\cdots, e_{n-\ell}\}$ be a local orthonormal frame. 
    By Gauss equation, 
    \begin{align*}
        C_{m-\ell}^N(\partial_t,\partial_{\theta_1},\cdots,\partial_{\theta_{m-\ell-2}},\nu)&=\sum_{i=1}^{m-\ell}\sum_{j=i}^{n-\ell}R^N(e_i,e_j,e_i,e_j)\\
        &=\Ric_N(e_1,e_1)+\sum_{i=2}^{m-\ell}\sum_{j=i}^{n-\ell}(R^\Sigma_{ijij}-h_{jj}h_{ii}+h_{ji}^2)\\
        &=\Ric_N(e_1,e_1)-\sum_{i=2}^{m-\ell}\sum_{j=2}^{n-\ell}(h_{jj}h_{ii}-h_{ji}^2).
    \end{align*}
    Thus
    \begin{align*}
         \frac{2\ell+2}{\ell+2}\int_\Sigma |\nabla^\Sigma \phi_r|^2&\geq \int_\Sigma (|A_\Sigma|^2+\sum_{2}^{m-\ell}\sum_{j=2}^{n-\ell}(h_{ii}h_{jj}-h_{ij}^2)-\frac{\ell-2}{2\ell}H_{\Sigma}^2)\phi_r^2\\
         &\geq \int_\Sigma (D(n-\ell,m-\ell)-\frac{\ell-2}{2\ell})H_{\Sigma}^2\phi_r^2\\
         & \geq \int_\Sigma (\frac{\ell-1}{2\ell}-\frac{\ell-2}{2\ell})H_{\Sigma}^2\phi_r^2\\
         &=\int_\Sigma \frac{1}{2\ell}H_\Sigma^2\phi_r^2>0
    \end{align*}
   where in the third inequality we applied equation \eqref{D-inequality}. Letting $r \rightarrow \infty$, we obtain that $\Sigma$ is totally geodesic, $\nabla^N_\nu u= 0$, $\nabla^\Sigma u= 0$ along $\Sigma$. Moreover, $-\frac{2\ell}{\ell+1}\Delta_N u+C_{m-\ell}^Nu= 0$ along $\Sigma$. From these we know that $C_{m-\ell}^N=C_{m-\ell}^N(\partial_t,\partial_{\theta_1},\cdots,\partial_{\theta_{m-\ell-2}},\nu)=0$ on $\Sigma$.
\vskip.2cm
   \textbf{Case 2:} $\ell=0$. In this case, $\mathcal{A}_0(u)$ is just the area functional. Plugging $\phi_r$ into stability inequality, we have
   \begin{align*}
       \int_\Sigma|\nabla\phi_r|^2&\geq \int_\Sigma (\Ric_N(\nu,\nu)+|A_\Sigma|^2)\phi_r^2\\
       &\geq \int_\Sigma(C_m^N(e_1,\cdots,e_{m})+|A_\Sigma|^2+\sum_{i=2}^m\sum_{j=2}^{n}(h_{jj}h_{ii}-h_{ji}^2))\phi_r^2\\
       &\geq 0
   \end{align*}
   where in the last inequality we used Remark \ref{brendle's inequality}. Letting $r\rightarrow +\infty$, we obtain that $\Sigma$ is totally geodesic, $C_m^N=0$ and $\Ric_N(\nu,\nu)=0$ on $\Sigma$.
\end{proof}

We start to prove the main theorem.
\begin{proof}[Proof of Theorem \ref{splitting of nonnegative C_m}]
    The structure of the proof is illustrated in Figure \ref{picture of proof}. First, by similar arguments to \textbf{Claim 1} in the proof of Theorem \ref{spectral splitting theorem of C_2}. We can construct an $\mathcal{A}_0(1)$-minimizing (area-minimizing) hypersurface $\Sigma_1$ in $N$ that can be properly mapped to $M^{n-m}\times \mathbb{T}^{m-2}\times \mathbb{R}$ with nonzero degree. It follows from Theorem \ref{spectral inheritance} that $\Sigma_1$ admits nonnegative spectral $(1,m-1)$-intermediate curvature, i.e., there exists a positive function $u_1\in C^{2,\beta}$ for $\beta\in (0,1)$ such that $-\Delta_{\Sigma_1}u_1+C_{m-1}^{\Sigma_1} u_1=0.$ We repeat this procedure by $m-1$ times in total till the last slice $\Sigma_{m-1}$. Note that assumptions of Theorem \ref{spectral inheritance} is always satisfied in each step due to \eqref{D-inequality}. Eventually, we obtain a chain of manifolds $$\Sigma_{m-1}\subset \Sigma_{m-2}\subset \cdots\subset \Sigma_\ell\subset \cdots \subset \Sigma_1\subset N.$$ The following holds $$-\frac{2\ell}{\ell+1}\Delta_{\Sigma_{\ell}}u_{\ell}+C_1^{\Sigma_{\ell}}u_{\ell}=0$$ for a positive function $u_{\ell}$ on $\Sigma_{\ell}$.  In particular, the dimension of last slice $\Sigma_{m-1}$ is $n-m+1$ and it has at least two ends. One can check the following equivalence
    \[\frac{2m-2}{m}<\frac{4}{n-m}\ \ \Longleftrightarrow\ \ m^2-mn+m+n>0 \]
    which is satisfied by the assumption of $n,m$. This means that $\Sigma_{m-1}$ is a product manifold $E\times \mathbb{R}$ where $E$ is a closed manifold with nonnegative Ricci curvature.

    Since 
    $\frac{2m-4}{m-1}+n-m<4$, Theorem \ref{spectral splitting theorem of C_2} implies that up to scaling, $\Sigma_{m-2}$ is isometric to $E\times \mathbb{S}^1\times \mathbb{R}$  with canonical product metric.  
    
    In the following, we give a sketch of the splitting of $\Sigma_{m-3}$. After that, one can repeat the proof till getting the splitting of top manifold $N$.
    First, we know that there exists a smooth map from $\Sigma_{m-3}$ to $M\times \mathbb{T}^2\times \mathbb{R}$ with nonzero degree, $\Sigma_{m-3}$ has nonnegative spectral $(\frac{2m-6}{m-2},m-3)$-intermediate curvature, and $\Sigma_{m-2}$ is an $\mathcal{A}_{\frac{2m-6}{m-2}}(u_{m-3})$-minimizer. We can use the following steps (similar to the steps in the proof of Theorem \ref{spectral splitting theorem of C_2}) to prove $\Ric_{\Sigma_{m-3}}\geq 0$ everywhere and apply Cheeger-Gromoll splitting theorem to complete the proof.

    Step 1: Perturb the metric of $\Sigma_{m-3}$ near $\Sigma_{m-2}$ and construct compact minimizer $\Sigma^{r,t,h}_{m-2}$ in this new metric.

    Step 2: Show it always intersects $3r$-ball and then take limit to $\Sigma^{r,t}_{m-2}$ as $h\rightarrow \infty.$

    Step 3: Show $\Sigma^{r,t}_{m-2}$ 
    that can be mapped to $M\times \mathbb{S}^1\times \mathbb{R}$ with nonzero degree. 
    
    Step 4: Show $\Sigma^{r,t}_{m-2}$ always intersects $r$-ball. In this step we need to use Theorem \ref{spectral splitting theorem of C_2} and Proposition \ref{rigidity of Cm in higher dimension}. 

    Step 5: Show $\Ric_{\Sigma_{m-3}}\geq 0$ on $\Sigma_{m-2}$ and then extend it to everywhere in $\Sigma_{m-3}.$

    To conclude, we have proven the splitting of $N.$
\end{proof}


\section{Construction of examples}\label{section4}
We will use the following result frequently.
\begin{proposition}\label{lift C_m-1 to C_m}
Let $\Sigma$ be an $n$-dimensional Riemannina manifold and let $0\leq k <4$. If there exists a positive smooth function $u$ satisfies $-\frac{4}{4-k}\Delta_{\Sigma}u+C_{m-1}^{\Sigma}u\geq \lambda u$ for $\lambda\geq 0$. Then on the $(n+1)$-dimensional manifold $M=\Sigma\times \mathbb{S}^{1}$ with metric $g_{M}=g_{\Sigma}+u^{4\frac{2-k}{4-k}}d \theta^2$ we have 
\[
-k\Delta_{M}u^{\frac{2}{4-k}}+C_{m}^{M}(e_{1},...,e_{m-1},e_{\theta})u^{\frac{2}{4-k}} \geq \lambda u^{\frac{2}{4-k}}
\]
where by abuse of notation, $u$ is the constant extension  in the $\mathbb{S}^1$-direction, i.e., $u(\cdot,\theta)=u(\cdot)$, $e_\theta$ is the unit vector along  $\mathbb{S}^1$-direction, i.e., $e_\theta=u^{2\frac{k-2}{4-k}}\partial_\theta$ and $e_i, i=1,\cdots,m-1$ are unit tangent vectors of $\Sigma.$

Moreover, $\forall\ \alpha\in \mathbb{R}$, $\Sigma$ is $\mathcal{A}(\Sigma)=\int_{\Sigma}u^{\alpha}$-stable in $M$, i.e. $\forall\ \psi \in C_{0}^{\infty}(\Sigma)$, we have
\begin{equation*}
        \begin{split}
            \frac{d^2}{d t^2}\bigg|_{t=0}\mA(\Sigma_{t})&=\int_{\Sigma}|\nabla^{\Sigma}\psi|^2 u^{\alpha}-\left(|A_{\Sigma}|^2+\Ric_M(\nu,\nu)\right)\psi^2 u^{\alpha}\\
            &+\int_\Sigma \alpha(\alpha-1)u^{\alpha-2}\left(\nabla^M_{\nu}u\right)^2\psi^2 +\int_{\Sigma}\alpha u^{\alpha-1}\left(\Delta_M u-\Delta_{\Sigma} u\right)\psi^2\\
            &\geq 0.
        \end{split}
    \end{equation*}
\end{proposition}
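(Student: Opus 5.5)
The plan is to compute directly in the warped product $M=\Sigma\times\mathbb{S}^1$ with $g_M=g_\Sigma+w^2d\theta^2$, where
\[
w=u^{b},\qquad b=\frac{2(2-k)}{4-k},\qquad v:=u^{a},\qquad a=\frac{2}{4-k},
\]
and all functions are extended constantly in $\theta$.

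\textbf{Curvature of the warped product.} I will first record the standard identities for a warped product over $\Sigma$ with fiber $\mathbb{S}^1$ and $\theta$-independent warping function $w$:
\begin{enumerate}[(a)]
\item each slice $\Sigma\times\{\theta\}$ is totally geodesic, being the fixed set of the isometric reflection $\theta\mapsto 2\theta_0-\theta$, so $R^M(X,Y,X,Y)=R^\Sigma(X,Y,X,Y)$ for $X,Y$ tangent to $\Sigma$;
\item $R^M(X,e_\theta,X,e_\theta)=-\nabla^2_\Sigma w(X,X)/w$ for unit $X$ tangent to $\Sigma$;
\item $\Delta_M f=\Delta_\Sigma f+\lp\nabla^\Sigma\log w,\nabla^\Sigma f\rp$ for $\theta$-independent $f$.
\end{enumerate}

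\textbf{The curvature inequality.} Order the frame as $e_1,\dots,e_{m-1},e_\theta,e_m,\dots,e_n$. In $C_m^M(e_1,\dots,e_{m-1},e_\theta)$, the pairs lying inside $\Sigma$ give exactly $C^\Sigma_{m-1}(e_1,\dots,e_{m-1})$, using (a). The pairs involving $e_\theta$ run over all $e_j$, $j=1,\dots,n$, and so by (b) they sum to $\Ric_M(e_\theta,e_\theta)=-\Delta_\Sigma w/w$. Hence
\[
C_m^M(e_1,\dots,e_{m-1},e_\theta)=C_{m-1}^\Sigma(e_1,\dots,e_{m-1})-\frac{\Delta_\Sigma w}{w}\ \ge\ C_{m-1}^\Sigma-\frac{\Delta_\Sigma w}{w}.
\]
Next I combine this with $-k\Delta_Mv/v$, using (c) and $\Delta u^p/u^p=p\,\Delta u/u+p(p-1)|\nabla\log u|^2$.

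The coefficient of $\Delta_\Sigma u/u$ is $-ka-b=-\tfrac{4}{4-k}$. The coefficient of $|\nabla\log u|^2$ is
\[
-ka(a+b-1)-b(b-1)=-\frac{2k(2-k)}{(4-k)^2}+\frac{2k(2-k)}{(4-k)^2}=0.
\]
Therefore
\[
\frac{-k\Delta_Mv+C_m^M v}{v}\ \ge\ \frac{-\frac{4}{4-k}\Delta_\Sigma u+C^\Sigma_{m-1}u}{u}\ \ge\ \lambda,
\]
which is the first claim. This is a bookkeeping check, and the exponents were clearly chosen to make the gradient terms cancel.

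\textbf{Stability of the slice.} For the weighted stability of $\Sigma=\Sigma\times\{\theta_0\}$, note that $\nu=e_\theta$, that $A_\Sigma=0$, and that $\nabla^M_\nu u=0$. The second variation therefore reduces to the quadratic form
\[
Q(\psi)=\int_\Sigma |\nabla\psi|^2u^\alpha+\Big(\frac{\Delta_\Sigma w}{w}+\alpha\lp\nabla\log w,\nabla\log u\rp\Big)\psi^2u^\alpha .
\]
The key observation is that rotation in $\theta$ is an isometry preserving $u$, generated by the Killing field $\partial_\theta=w\,\nu$. It maps $\Sigma$ to other slices of the same weighted area, so the normal speed $w>0$ should be a Jacobi field, i.e.\ $L_\alpha w=0$ for the weighted Jacobi operator
\[
L_\alpha\psi=-u^{-\alpha}\operatorname{div}(u^\alpha\nabla\psi)+\Big(\frac{\Delta w}{w}+\alpha\lp\nabla\log w,\nabla\log u\rp\Big)\psi .
\]
I will verify this directly: $u^{-\alpha}\operatorname{div}(u^\alpha\nabla w)=\Delta w+\alpha\lp\nabla\log u,\nabla w\rp$, which cancels the potential term times $w$. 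Then, writing $\psi=w\varphi$ with $\varphi\in C_0^\infty$ and integrating by parts against the weight $u^\alpha$, I obtain
\[
Q(w\varphi)=\int_\Sigma w^2u^\alpha|\nabla\varphi|^2\ \ge\ 0 .
\]
This gives stability for every $\alpha$.

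The main obstacle is only sign and normalization care: making sure the definition's double sum with $e_\theta$ placed among the first $m$ vectors really yields the full $\Ric_M(e_\theta,e_\theta)$, and that the exponent arithmetic closes. Both have been checked above.
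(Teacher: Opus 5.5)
Your proposal is correct and follows the paper's proof. The curvature part is the same computation: $\Ric_M(e_\theta,e_\theta)=-\Delta_\Sigma w/w$, with the exponents making the $\Delta_\Sigma u/u$ coefficient equal to $-\frac{4}{4-k}$ and the gradient terms cancel. Your substitution $\psi=w\varphi$ with the Jacobi field $w$ reproduces the paper's completed square exactly, since $|\nabla^\Sigma\psi-\psi\nabla^\Sigma\log w|^2u^\alpha=w^2|\nabla^\Sigma\varphi|^2u^\alpha$, so the Killing-field remark explains why that square appears but the underlying integration by parts is identical.
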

\begin{proof}
    For convenience, we denote $\gamma=\frac{2}{4-k}$ and $\delta=\frac{4-2k}{4-k}$.
    It is not hard to check $\Sigma$ is totally geodesic in $M$ and $\Ric^{M}(e_{\theta},e_{\theta} )=-\frac{\Delta_{\Sigma}(u^{\delta})}{u^{\delta}}$. Thus, we have
    \[
    \begin{aligned}
    &\quad -k\Delta_{M}(u^{\gamma})+C_{m}^{M}(e_{1},...,e_{m-1},e_{\theta})u^{\gamma}\\
    &=-k\big(\Delta_{\Sigma}(u^{\gamma})+\frac{1}{u^{\delta}}\langle \nabla_{\Sigma}(u^{\delta}),\nabla_{\Sigma}(u^{\gamma})\rangle\big) +\big(C_{m-1}^{\Sigma}(e_{1},...,e_{m-1})-\frac{\Delta_{\Sigma}(u^{\delta})}{u^{\delta}}\big)u^{\gamma}   \\
    &\geq -k\big(\Delta_{\Sigma}(u^{\gamma})+\frac{1}{u^{\delta}}\langle \nabla_{\Sigma}(u^{\delta}),\nabla_{\Sigma}(u^{\gamma})\rangle\big) +\big(\frac{4}{4-k}\frac{\Delta_{\Sigma}u}{u}+\lambda-\frac{\Delta_{\Sigma}(u^{\delta})}{u^{\delta}}\big)u^{\gamma}\\
    &=\lambda u^\gamma
    \end{aligned}
    \]
    where in the inequality we have used $-\frac{4}{4-k}\Delta_{\Sigma}u+C_{m-1}^{\Sigma}u\geq \lambda u$.

Now we prove $\Sigma$ is weighted stable. Actually, Since $A_{\Sigma}=0$ and $u_{\nu}=0$, it suffices to show
\begin{equation*}
        \begin{split}
            \int_{\Sigma}|\nabla^{\Sigma}\psi|^2 u^{\alpha}-\Ric_M(e_{\theta},e_{\theta})\psi^2 u^{\alpha} +\int_{\Sigma}\alpha u^{\alpha-1}\left(\Delta_M u-\Delta_{\Sigma} u\right)\psi^2\geq 0.
        \end{split}
    \end{equation*}
Since $\Ric^{M}(e_{\theta},e_{\theta} )=-\frac{\Delta_{\Sigma}(u^{\delta})}{u^{\delta}}$ and $\Delta_{M}u=\Delta_{\Sigma}u+\frac{1}{u^{\delta}}\langle \nabla_{\Sigma}u^{\delta},\nabla_{\Sigma}u\rangle$, we only need to show
\begin{equation*}
        \begin{split}
            \int_{\Sigma}|\nabla^{\Sigma}\psi|^2 u^{\alpha}+\frac{\Delta_{\Sigma}(u^{\delta})}{u^{\delta}}\psi^2 u^{\alpha} +\int_{\Sigma}\alpha u^{\alpha-1}\left(\frac{1}{u^{\delta}}\langle \nabla_{\Sigma}u^{\delta},\nabla_{\Sigma}u\rangle\right)\psi^2\geq 0.
        \end{split}
    \end{equation*}
However, we obtain
\begin{equation*}
        \begin{split}
            &\quad\int_{\Sigma}|\nabla^{\Sigma}\psi|^2 u^{\alpha}+\frac{\Delta_{\Sigma}(u^{\delta})}{u^{\delta}}\psi^2 u^{\alpha} +\int_{\Sigma}\alpha u^{\alpha-1}\left(\frac{1}{u^{\delta}}\langle \nabla_{\Sigma}u^{\delta},\nabla_{\Sigma}u\rangle\right)\psi^2\\
            &=\int_{\Sigma}|\nabla^{\Sigma}\psi|^2 u^{\alpha}+\left(\Delta_{\Sigma}\log u^{\delta}+|\nabla_{\Sigma}\log u^{\delta}|^2\right)\psi^2 u^{\alpha} +\int_{\Sigma}\alpha\delta u^{\alpha-2}|\nabla_{\Sigma}u|^2\psi^2\\
            &=\int_{\Sigma}|\nabla^{\Sigma}\psi|^2 u^{\alpha}+|\nabla_{\Sigma}\log u^{\delta}|^2\psi^2 u^{\alpha} -\langle \nabla_{\Sigma}\log u^{\delta},2\psi\nabla_{\Sigma}\psi\rangle u^{\alpha}\\
            &\geq 0,
        \end{split}
    \end{equation*}
    where the second equation is due to integration by part.
\end{proof}

\subsection{Examples of non-splitting manifold}
Using Proposition \ref{lift C_m-1 to C_m}, we can show the dimension restriction of Theorem \ref{splitting of nonnegative C_m} is sharp by constructing examples with positive $m$-intermediate curvature (Theorem \ref{counterexample of splitting: main in the introduction} in the introduction).

\begin{theorem}\label{counterexample of splitting}
    When $6\leq n\leq 7$ and $2\leq m\leq n-3$, there exists a complete metric on $N^{n-m}\times T^{m-1}\times \mathbb{R}$ with uniformly positive $m$-intermediate curvature.
\end{theorem}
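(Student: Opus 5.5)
The construction runs the slicing of Figure \ref{picture of proof} backwards: we build the bottom slice first and then climb back up with Proposition \ref{lift C_m-1 to C_m}. Here $N=\mathbb{S}^{n-m}$. The bottom slice is $\Sigma_{m-1}\approx \mathbb{S}^{n-m}\times\mathbb{R}$, of dimension $d=n-m+1$. On it we want a complete metric $g_0$ and a positive function $u_0$ satisfying
\[
-\gamma_{m-1}\Delta u_0+\Ric\, u_0\geq \lambda u_0,\qquad \gamma_{m-1}=\frac{2m-2}{m},\quad \lambda>0.
\]
Since $6\le n\le 7$ and $2\le m\le n-3$, we have $m^2-mn+m+n\le 0$. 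Equivalently, $\gamma_{m-1}\ge \frac{4}{n-m}=\frac{4}{d-1}$. This is exactly the range where Theorem \ref{splitting of nonnegative spectral ric} and Theorem \ref{Diameter estimate of Shen-ye} fail. Xu \cite{XuTrans} constructs such examples: a warped product
\[
g_0=dt^2+\varphi(t)^2 g_{\mathbb{S}^{d-1}},\qquad u_0=u_0(t),
\]
with $\varphi$ suitably chosen and $u_0$ decaying or growing exponentially in $t$. The key point is that the lower-order terms $\frac{\varphi''}{\varphi}$ and $\frac{\varphi'}{\varphi}u_0'$ can be dominated once $\gamma$ reaches $\frac{4}{d-1}$. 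I would cite his construction directly, or re-derive it by checking the ODE inequality on the $\Ric(\partial_t,\partial_t)$ and spherical directions. Uniform positivity is obtained by keeping $\varphi$ bounded between two positive constants and $u_0'/u_0$ constant near infinity.

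Next we iterate Proposition \ref{lift C_m-1 to C_m}. At stage $j$, going from $\Sigma_{\ell}$ to $\Sigma_{\ell-1}$, we need $k_{\ell-1}=\frac{2(\ell-1)}{\ell}$ together with $\frac{4}{4-k_{\ell-1}}=\frac{2\ell}{\ell+1}=k_\ell$. This is exactly the relation in Figure \ref{picture of proof}: $4-\frac{2(\ell-1)}{\ell}=\frac{2\ell+2}{\ell}$, so $\frac{4}{4-k}=\frac{2\ell}{\ell+1}$. Suppose $\Sigma_\ell$ carries $u_\ell$ with $-k_\ell\Delta u_\ell+C^{\Sigma_\ell}_{m-\ell}u_\ell\ge\lambda u_\ell$. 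The proposition then gives, on $\Sigma_\ell\times\mathbb{S}^1$ with the warped metric $g+u_\ell^{4(2-k)/(4-k)}d\theta^2$, the function $u_{\ell-1}=u_\ell^{2/(4-k)}$ with
\[
-k_{\ell-1}\Delta u_{\ell-1}+C^{}_{m-\ell+1}(e_1,\dots,e_{m-\ell},e_\theta)\,u_{\ell-1}\ge \lambda u_{\ell-1}.
\]
Repeating $m-1$ times ends at $\ell=0$, where $k_0=0$. There we get a metric on $\mathbb{S}^{n-m}\times\mathbb{T}^{m-1}\times\mathbb{R}$ with $C_m\ge\lambda$, provided the estimate holds for all orthonormal $m$-frames, not just the special ones.

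The main obstacle is exactly this gap: Proposition \ref{lift C_m-1 to C_m} only controls $C_m$ on frames containing $e_\theta$. To close it, I would work with the explicit form of the metric. Because every $u_\ell$ depends only on $t$, the final metric is a multiply warped product
\[
dt^2+\varphi(t)^2g_{\mathbb{S}^{n-m}}+\sum_i w_i(t)^2d\theta_i^2 .
\]
Its curvature operator is diagonal in the frame $\{\partial_t,\text{sphere},\theta_i\}$, with sectional curvatures $-w_i''/w_i$, $-w_i'w_j'/(w_iw_j)$, $-\varphi''/\varphi$, $(1-\varphi'^2)/\varphi^2$, and so on. For a diagonal curvature operator, the minimum of $C_m$ over orthonormal $m$-frames is attained on coordinate frames. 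I would verify this by the standard argument that $C_m(e_1,\dots,e_m)$ equals the trace of the curvature operator restricted to $\Lambda^2$ of the $m$-plane plus the mixed terms, and is therefore linear in the projection onto that $m$-plane when $R$ is diagonal.

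It then remains to check the finitely many coordinate frames. Frames containing at least one $\theta_i$ are handled by the inductive inequality, since the $\theta$-directions are symmetric up to the ordering of lifts. This may require the lifted inequality for every choice of $m-1$ frame vectors in $\Sigma$, which we ensure by applying the proposition with the minimum $C^{\Sigma}_{m-1}$ at each stage. Frames with no $\theta$-direction consist of $\partial_t$ and sphere directions, or sphere directions only. For these we need the sphere curvature $(1-\varphi'^2)/\varphi^2$ to dominate. I would arrange this by taking the sphere radius small, choosing $\varphi\approx\epsilon$ with the scaling in $t$ adjusted so that $\lambda$ and the exponential rates stay fixed, while the round sphere contributes of order $\epsilon^{-2}$ and makes these terms strongly positive.
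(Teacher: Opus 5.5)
The overall architecture matches the paper's: a Xu-type example on $\mathbb{S}^{n-m}\times\mathbb{R}$, then $m-1$ lifts by Proposition \ref{lift C_m-1 to C_m}, then a small sphere to handle the remaining frames. Your bookkeeping $k_{\ell-1}=\frac{2(\ell-1)}{\ell}$, $u_{\ell-1}=u_\ell^{\ell/(\ell+1)}$ is right, and every circle ends up with the same warping $u^{4/m}$, which justifies your symmetry remark.

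\textbf{The bottom slice.} The genuine gap is that the bottom slice as you describe it cannot exist. Put $d=n-m+1$ and $\gamma=\frac{2m-2}{m}$. Test with $\phi=\varphi^{-(d-1)/\gamma}\eta(t)$, where $\eta$ is a cutoff at scale $R$. Integrating by parts in $t$, the cross terms cancel for this exponent, and up to the factor $|\mathbb{S}^{d-1}|$ you get
\[
\int\big(\gamma|\nabla\phi|^2+\Ric(\partial_t,\partial_t)\phi^2\big)=(d-1)\Big(d-2-\frac{d-1}{\gamma}\Big)\int_{\mathbb{R}}\varphi'^2\varphi^{d-3-\frac{2(d-1)}{\gamma}}\eta^2\,dt+\gamma\int_{\mathbb{R}}\varphi^{d-1-\frac{2(d-1)}{\gamma}}\eta'^2\,dt .
\]
In the whole range of the theorem, $\gamma\le\frac{n-m}{n-m-1}=\frac{d-1}{d-2}$; this is the inequality checked in the paper's reproof of Brendle--Hirsch--Johne. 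So the first term is $\le 0$. If $\varphi$ is pinched between positive constants, the second term is $O(1/R)$, while $\lambda\int\phi^2\gtrsim R$. Since $\Ric\le\Ric(\partial_t,\partial_t)$, this gives $\lambda_1(-\gamma\Delta+\Ric-\lambda)<0$. Hence no positive $u_0$ with $-\gamma\Delta u_0+\Ric\,u_0\ge\lambda u_0$ exists.

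In the equality cases $(n,m)=(6,2),(6,3)$, where $\gamma=\frac{4}{d-1}$, even $(\log u_0)'\equiv b$ near infinity is impossible. The $\partial_t$-inequality becomes $z'\le-\frac{\lambda}{d-1}-z^2$ for $z=(\log\varphi)'+\frac{\gamma b}{2}$, which blows up in finite time.

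So positivity does not come from dominating $\varphi''/\varphi$ and $\varphi'u_0'/\varphi$. It comes from the cross term $(d-1)\gamma(\log\varphi)'(\log u_0)'$ being negative, which forces the warping to degenerate. The paper supplies the actual profile by solving the reduced ODE system for $h=(n-m)(\log f)'+\gamma(\log u)'$. When $\gamma>\frac{4}{n-m}$, $f$ decays and $u$ grows exponentially at both ends. When $\gamma=\frac{4}{n-m}$, $f\sim e^{-cr^2}$ and $u\sim e^{c'r^2}$.

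Accordingly, your last step should not rescale $t$. Replace $\varphi$ by $\epsilon\varphi$; this leaves $\varphi'/\varphi$, $\varphi''/\varphi$, and hence all spectral inequalities, unchanged. Meanwhile the sphere curvature $\frac{1-\epsilon^2\varphi'^2}{\epsilon^2\varphi^2}$ beats the bounded (or $O(r^2)$) log-derivative terms uniformly because $\varphi\to0$. This is exactly the paper's closing estimate.

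\textbf{Reduction to coordinate frames.} This is a legitimate alternative to the paper's decomposition $X_l=X_l^T+X_l^S$, but the reason you give is wrong. Write $C_m(P)=\frac12R_N-\operatorname{tr}(\mathcal R|_{\Lambda^2P^\perp})$. A curvature operator with eigenvalues $K_{ij}$ on $e_i\wedge e_j$ gives
\[
\operatorname{tr}(\mathcal R|_{\Lambda^2Q})=\sum_{i<j}K_{ij}(\pi_{ii}\pi_{jj}-\pi_{ij}^2),
\]
where $\pi$ is the orthogonal projection onto $Q$. This is quadratic in $\pi$, not linear.

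The conclusion still holds. Let $V$ be an $n\times(n-m)$ matrix whose columns form an orthonormal basis of $Q$, and let $S$ run over $(n-m)$-subsets of rows. A Cauchy--Binet computation gives $\pi_{ii}\pi_{jj}-\pi_{ij}^2=\sum_{S\ni i,j}\det(V_S)^2$, and $\sum_S\det(V_S)^2=1$. Hence $C_m(P)$ is a convex combination of the values of $C_m$ on coordinate $m$-planes.

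With that fix, your level-by-level induction over coordinate frames goes through: use the lift for frames containing some $e_{\theta_j}$, and sphere dominance for $\theta$-free frames at every level. The paper's route is more economical. It needs the bottom inequality only in the $\partial_r$-direction and a single sphere-dominance estimate on the final manifold.
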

We first construct a manifold $N^{n-m}\times \mathbb{R}$ with a complete metric such that it has uniformly positive spectral $(\frac{2m-2}{m},1)$-intermediate curvature. For this, we consider the following example from \cite{XuTrans}, which shows that the range of $\gamma$ is sharp in Theorem \ref{splitting of nonnegative spectral ric}. Consider the manifold $\Sigma=\mathbb{S}^{n-m}\times \mathbb{R}$ with warped product metric $g=dr^2+\epsilon^2f(r)^2\bar{g}$, where $r\in(-\infty,+\infty)$ and $\bar{g}$ is the standard metric on $\mathbb{S}^{n-m}$. For $6\leq n\leq 7$ and $2\leq m\leq n-3$, we have $\frac{4}{n-m}\leq \frac{2m-2}{m}$. Thus, set $\lambda>0$, with the metric $g$ we would like construct the first eigenfunction $u=u(r)$ to satisfy 
\[
\frac{2m-2}{m}\Delta_{\Sigma}u= C_1^\Sigma(\partial_r)u-\lambda u
. 
\]
Here we can choose $\epsilon$ sufficiently small to make sure $C_1^\Sigma=C_1^\Sigma(\partial_r)$. We write down the explicit expressions of $u(r)$ and $f(r)$. Actually, $u(r)$ and $f(r)$ solve the following ODE
\begin{equation}\label{bottom spectral ode}
    \frac{2m-2}{m}\Big(\frac{u''(r)}{u(r)}+(n-m)\frac{f'(r)}{f(r)}\frac{u'(r)}{u(r)}\Big)=-(n-m)\frac{f''(r)}{f(r)}-\lambda.
\end{equation}
Solutions to the following system
\begin{equation}
    \begin{cases}
     (n-m)(\log f)'=h-\frac{2m-2}{m}(\log u)'\\
    \frac{2m-2}{m}(\log u)'=\frac{2}{2-n+m}h\\
    h'=-\lambda+\frac{n-m-\frac{4m}{2m-2}}{(n-m-2)^2}h^2
\end{cases}
\end{equation}
will solve equation \eqref{bottom spectral ode}.
If $\frac{4}{n-m}=\frac{2m-2}{m}$, then
\[
    u(r)=\exp \big(\frac{m\lambda r^2}{(2m-2)(n-m-2)}\big),\quad f(r)=\exp \big(-\frac{\lambda r^2}{2(n-m-2)}\big).
\]
If $\frac{4}{n-m}<\frac{2m-2}{m}$, then
\[
u(r) = \Big(\cosh\Big(\sqrt{C_{4}\lambda}r\Big)\Big)^{-\frac{mC_{3}}{(2m-2)C_{4}}},
\]
\[
f(r)=\Big(\cosh\Big(\sqrt{C_{4}\lambda}r\Big)\Big)^{\frac{C_{3}-1}{(n-m)C_{4}}},
\]
where $C_{3}=-\frac{2}{n-m-2}<0$ and $C_{4}=
\frac{(n-m)-\frac{2m}{m-1}}{(n-m-2)^2}>0$.
\vskip.2cm
We now prove Theorem \ref{counterexample of splitting}.
\begin{proof}[Proof of Theorem \ref{counterexample of splitting}]     
 We consider the manifold $M=\Sigma\times \mathbb{T}^{m-1}$ with metric $$g_{M}=dr^2+\epsilon^2f^{2}(r)\bar{g}+u^{\frac{4}{m}}(r)dx_{1}^{2}+\cdots +u^{\frac{4}{m}}(r)dx_{m-1}^2.$$
Denote $(\Sigma_{j},g_{j})=(\Sigma\times \mathbb{T}^{j},dr^2+\epsilon^2f^{2}(r)\bar{g}+u^{\frac{4}{m}}(r)dx_{1}^{2}+\cdots +u^{\frac{4}{m}}(r)dx_{j}^2)$. 

We have known from above that there exists a positive function $u$ on $\Sigma$ such that
\[-\frac{2m-2}{m}\Delta_\Sigma u+C_1^\Sigma(\partial_r)u=\lambda u.\]
First, using Proposition \ref{lift C_m-1 to C_m} for $\Sigma$, we get
\[
-\frac{2m-4}{m-1}\Delta_{\Sigma_{1}}u^{\frac{m-1}{m}}+C_{2}^{\Sigma_{1}}(\partial_{r},e_{x_{1}})u^{\frac{m-1}{m}} \geq \lambda u^{\frac{m-1}{m}}.
\]
Next, using Proposition \ref{lift C_m-1 to C_m} for $\Sigma_{1}$, we get
\[
-\frac{2m-6}{m-2}\Delta_{\Sigma_{2}}(u^{\frac{m-1}{m}})^{\frac{m-2}{m-1}}+C_{3}^{\Sigma_{2}}(\partial_{r},e_{x_1},e_{x_2})(u^{\frac{m-1}{m}})^{\frac{m-2}{m-1}} \geq \lambda (u^{\frac{m-1}{m}})^{\frac{m-2}{m-1}}.
\]
Continue this progress for $\Sigma_{j}$, we get
\[
-\frac{2m-2-2j}{m-j}\Delta_{\Sigma_{j}}u^{\frac{m-j}{m}}+C_{j+1}^{\Sigma_{j}}(\partial_{r},e_{x_1},...,e_{x_{j}})u^{\frac{m-j}{m}} \geq \lambda u^{\frac{m-j}{m}}.
\]
In particular, when $j=m-1$, we obtain
\[
C_{m}^{M}(\partial_{r},e_{x_{1}},...,e_{x_{m-1}})\geq \lambda. 
\]

Next we claim that $C_{m}^{M}=C_{m}^{M}(\partial_{r},e_{x_{1}},...,e_{x_{m-1}})$ when $\epsilon$ is sufficiently small. Once this is done, we construct a complete metric $g_M$ on $M=\Sigma\times \mathbb{T}^{m-1}=\mathbb{S}^{n-m}\times \mathbb{T}^{m-1}\times \mathbb{R}$ which is not splitting and satisfies $C_{m}^{M}\geq \lambda >0$.
Now we prove the claim whose proof is inspired by \cite[Theorem 1.4]{XuTrans}. We shall try to show detailed calculations. We take the local coordinate $(y_{\alpha},r,x_{i})$, then $g=\sum_{\alpha,\beta=1}^{n-m}\epsilon^2 f^2(r)h_{\alpha\beta}dy_{\alpha}dy_{\beta}+dr^2+\sum_{i=1}^{m-1}u^{\frac{4}{m}}(r)dx_{i}^{2}$, where $h_{\alpha\beta}$ is the standard metric on $\mathbb{S}^{n-m}$ which can be chosen to be diagonal. Let $e_\alpha=\epsilon^{-1}f^{-1}\partial_{y_\alpha},e_r=\partial_r,e_i=u^{-\frac{2}{m}}\partial_{x_i}$. Then $\{\{e_{\alpha}\}_{\alpha=1}^{n-m},e_{r},\{e_{i}\}_{i=1}^{m-1}\}$ is the local orthonormal frame of the metric $g$. Let indices $\{\alpha,\beta,\gamma\}$ be in the $\mathbb{S}^{n-m}$ direction. Let $i,j$ be in the $T^{m-1}$ direction. By direct computations, we have
\[
\Gamma^{r}_{\alpha\beta} = -\epsilon^2 f(r)f'(r)h_{\alpha\beta}, \quad \Gamma^{i}_{\alpha\beta}=0, \quad \Gamma^{\gamma}_{\alpha\beta}=\Gamma^{\gamma}_{\alpha\beta}(h), \quad \Gamma^{\beta}_{\alpha r}=\frac{f'(r)}{f(r)}\delta^{\beta}_{\alpha},
\]
\[
\Gamma^{i}_{\alpha r}=\Gamma^{r}_{\alpha r}=\Gamma^{\beta}_{\alpha i}=\Gamma^{j}_{\alpha i}=\Gamma^{r}_{\alpha i}=\Gamma^{\alpha}_{rr}=\Gamma^{r}_{rr}=\Gamma^{i}_{rr}=\Gamma^{\alpha}_{r i}=\Gamma^{r}_{r i}=\Gamma^{\alpha}_{ij}=\Gamma^{k}_{ij}=0,
\]
\[
\Gamma^{j}_{ir}=\frac{2u'(r)}{mu(r)}\delta^{i}_{j}, \quad \Gamma^{r}_{ij}=-\frac{2u'(r)}{mu(r)}u^{\frac{4}{m}}(r)\delta^{i}_{j}.
\]
Then we obtain
\[
\operatorname{Rm}(e_{\alpha},e_{\beta},e_{\alpha},e_{\beta})=\frac{1}{\epsilon^2f^2(r)}-\frac{(f'(r))^2}{f^2(r)}, \quad \operatorname{Rm}(e_{\alpha},e_{i},e_{\alpha},e_{i})=-\frac{2f'(r)u'(r)}{mf(r)u(r)}, 
\]
\[
\operatorname{Rm}(e_{\alpha},e_{r},e_{\alpha},e_{r})=-\frac{f''(r)}{f(r)},\] \[\operatorname{Rm}(e_{i},e_{r},e_{i},e_{r})=-\frac{2}{m}\frac{u''(r)}{u(r)}-\frac{2}{m}\left(\frac{2}{m}-1\right)\left(\frac{u'(r)}{u(r)}\right)^2,
\]
\[
\operatorname{Rm}(e_{\alpha},e_{l},e_{p},e_{q})=\operatorname{Rm}(e_{\alpha},e_{\beta},e_{\gamma},e_{l})=\operatorname{Rm}(e_{\alpha},e_{\beta},e_{p},e_{q})=0,
\]
\[ 
\operatorname{Rm}(e_{i},e_{j},e_{i},e_{j})=-\frac{4}{m^2}\Big(\frac{u'(r)}{u(r)}\Big)^2.
\]
Thus, for any vectors $ u,v$ that are tangent to $\mathbb{S}^{n-m}$, we have the following estimate
\begin{equation}\label{es1_S}
\begin{aligned}
\frac{1}{|u|^2}\Ric(u,u)
&=(n-m-1)\Big(\frac{1}{\epsilon^2f^{2}(r)}-\frac{(f'(r))^2}{f^2(r)}\Big)-\frac{f''(r)}{f(r)}-\frac{2(m-1)f'(r)u'(r)}{mf(r)u(r)}\\
&\geq \frac{n-m-1}{\epsilon^2f^2(r)}-C(n,m)\Big(\big|(\log f(r))''\big|+\big|(\log f(r))'\big|^2+\big|(\log u(r))'\big|^2\Big)
\end{aligned}
\end{equation}
and 
\begin{equation}\label{es2_S}
\operatorname{Rm}(u,v,u,v)\leq \frac{1}{\epsilon^2f^2(r)}\Big(|u|^2|v|^2-\langle u,v\rangle^2\Big).
\end{equation}

Similarly, for any vectors $ u,v$ that are tangent to $\mathbb{T}^{m-1}\times \mathbb{R}$, we have the following estimate
\begin{equation}\label{es1_T}
\begin{aligned}
\frac{1}{|u|^2}\Ric(u,u)
&\leq C(n,m)\Big(\big|(\log f(r))''\big|+\big|(\log f(r))'\big|^2+\big|(\log u(r))'\big|^2+\big|(\log u(r))''\big|\Big)
\end{aligned}
\end{equation}
and 
\begin{equation}\label{es2_T}
\operatorname{Rm}(u,v,u,v)\leq C(n,m)\Big(\big|(\log u(r))'\big|^2+\big|(\log u(r))''\big|\Big)\Big(|u|^2|v|^2-\langle u,v\rangle^2\Big).
\end{equation}
Now we suppose $\{X_{l}\}_{l=1}^{m}$ is a local orthonormal frame of the metric $g$. We decompose $X_l = X_l^T + X_l^S$, where $X_l^T$ are tangent to $\mathbb{R}\times \mathbb{T}^{m-1}$ and $X_l^S$ are tangent to $\mathbb{S}^{n-m}$.

Then 
\[
\begin{aligned}
&\quad C^M_m(X_{1},...,X_{m})\\
&=\sum_{l=1}^{m}\Ric(X_{l},X_{l})-\sum_{l<k}\operatorname{Rm}(X_{l},X_{k},X_{l},X_{k})\\
&=\sum_{l=1}^{m}\Ric(X_{l}^{T},X_{l}^T)+\sum_{l=1}^{m}\Ric(X_{l}^{S},X_{l}^S)\\
&-\sum_{l<k}\operatorname{Rm}(X_{l}^T,X_{k}^T,X_{l}^T,X_{k}^T)-\sum_{l<k}\operatorname{Rm}(X_{l}^S,X_{k}^S,X_{l}^S,X_{k}^S)-J\\
&=C_{m}^{M}(\frac{X_{1}^{T}}{|X_{1}^{T}|},\frac{X_{2}^{T}}{|X_{2}^{T}|},...,\frac{X_{m}^{T}}{|X_{m}^{T}|})-\sum_{l=1}^{m}\Ric(\frac{X_{l}^{T}}{|X_{l}^{T}|},\frac{X_{l}^{T}}{|X_{l}^{T}|})|X_{l}^{S}|^2\\
&+\sum_{l<k}\operatorname{Rm}(\frac{X_{l}^{T}}{|X_{l}^{T}|},\frac{X_{k}^{T}}{|X_{k}^{T}|},\frac{X_{l}^{T}}{|X_{l}^{T}|},\frac{X_{k}^{T}}{|X_{k}^{T}|})(1-|X_{l}^{T}|^2|X_{k}^{T}|^2)\\
&+\sum_{l=1}^{m}\Ric(X_{l}^{S},X_{l}^S)
-\sum_{l<k}\operatorname{Rm}(X_{l}^S,X_{k}^S,X_{l}^S,X_{k}^S)-J,
\end{aligned}
\]
where 
\[
\begin{aligned}
J
&=\sum_{l<k}\Big(\operatorname{Rm}(X_{l}^T,X_{k}^S,X_{l}^S,X_{k}^T)+\operatorname{Rm}(X_{l}^T,X_{k}^S,X_{l}^T,X_{k}^S)\Big.\\
&+\operatorname{Rm}\Big.(X_{l}^S,X_{k}^T,X_{l}^S,X_{k}^T)+\operatorname{Rm}(X_{l}^S,X_{k}^T,X_{l}^T,X_{k}^S)\Big).
\end{aligned}
\]
Then 
\begin{equation}\label{es_J}
|J|\leq C(n,m)\Big(\big|(\log f(r))''\big|+\big|(\log f(r))'\big|^2+\big|(\log u(r))'\big|^2\Big)\sum_{l}|X_{l}^S|^2,
\end{equation}
where we use $|X_{l}^T|\leq 1$, for $l=1,...,m$.

This is followed by \eqref{es1_S} \eqref{es2_S} \eqref{es1_T} \eqref{es2_T} and \eqref{es_J},
\[
\begin{aligned}
&\quad C^M_m(X_{1},...,X_{m})\\
&\geq C_{m}^{M}(\frac{X_{1}^{T}}{|X_{1}^{T}|},\frac{X_{2}^{T}}{|X_{2}^{T}|},...,\frac{X_{m}^{T}}{|X_{m}^{T}|})\\
&+ \frac{1}{\epsilon^2f^2(r)}\Big((n-m-1)\sum_{l=1}^{m}|X_{l}^{S}|^2-\sum_{l<k}\big(|X_{l}^{S}|^2|X_{k}^{S}|^2-\langle X_{l}^{S},X_{r}^{S}\rangle^2\big)\Big)\\
&-\tilde{C}(n,m)\Big(\big|(\log f(r))''\big|+\big|(\log f(r))'\big|^2+\big|(\log u(r))'\big|^2+\big|(\log u(r))''\big|\Big)\sum_{l=1}^{m}|X_{l}^{S}|^{2}.\\
\end{aligned}
\]
Note that
\[
\sum_{l<k}\big(|X_{l}^{S}|^2|X_{k}^{S}|^2-\langle X_{l}^{S},X_{r}^{S}\rangle^2\big)\leq \frac{1}{2}\big(\sum_{l=1}^{m}|X_{l}^{S}|^2\big)^2\leq \frac{n-m}{2}\sum_{l=1}^{m}|X_{l}^{S}|^2,
\]
where the last inquality is due to $\sum_{l=1}^{m}|X_{l}^{S}|^2\leq n-m$.
Hence, we finally obtain
\[
\begin{aligned}
&C_{m}(X_{1},...,X_{m})\geq C_{m}\Big(\frac{X_{1}^T}{|X_{1}^T|},...,\frac{X_{m}^T}{|X_{m}^T|}\Big)+\Big[(\frac{n-m}{2}-1)\epsilon^{-2}f^{-2}(r)\\
&-\tilde{C}(n,m)\Big(\big|(\log f(r))''\big|+\big|(\log u(r))''\big|+\big|(\log f(r))'\big|^2+\big|(\log u(r))'\big|^2\Big)\Big]\sum_{l=1}^{m}|X_{l}^{S}|^2.
\end{aligned}
\]
Since $f$ decays exponentially as $r\rightarrow \pm\infty$ and $\log f$, $\log u$ are of polynomial growth. Then the second term will be positive by taking $\epsilon$ small enough. Note that $n-m>2.$

\end{proof}

\subsection{Examples of asymptotically sharp diameter estimate}
In this section, we give another application of Theorem \ref{spectral inheritance} and Proposition \ref{lift C_m-1 to C_m} in diameter estimate of surfaces that is stable in certain sense.
\begin{theorem}\cite{xuguoyi-sharp-diameter-esitmate}\label{xuguoyi}
    Suppose $(M^3,g)$ has scalar curvature $R_g\geq 1$. Then any complete stable minimal surface $\Sigma$ in $M$ satisfies
    \[\operatorname{diam}(\Sigma)<\frac{2\sqrt{6}\pi}{3}.\]
    
    The upper bound is sharp in the following sense: there is a sequence of complete $3$-dimensional manifolds $M_k=(\mathbb{S}^2\times \mathbb{S}^1,g_k)$ with $R_{g_k}\geq 1$ and compact stable minimal surfaces $\Sigma_k\subset M_k$ such that $$\lim_{k\rightarrow \infty}\operatorname{diam}(\Sigma_k)=\frac{2\sqrt{6}\pi}{3}.$$
\end{theorem}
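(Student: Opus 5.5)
The diameter bound follows by reducing to the two-dimensional case of \cref{Diameter estimate of Shen-ye}, and the sharpness by reversing this reduction with \cref{lift C_m-1 to C_m}.

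\emph{Upper bound.} I would reduce to a spectral Ricci condition on the surface, exactly as in \cref{spectral inheritance} with $k=0$ and $m=2$ in a $3$-manifold. The Gauss equation gives
\[
\Ric_M(\nu,\nu)+|A_\Sigma|^2=\tfrac12 R_M-K_\Sigma+\tfrac12|A_\Sigma|^2 .
\]
Hence stability yields, for every $\phi\in C^\infty_0(\Sigma)$,
\[
\int_\Sigma|\nabla\phi|^2+K_\Sigma\phi^2\ \geq\ \int_\Sigma\bigl(\tfrac12R_M+\tfrac12|A_\Sigma|^2\bigr)\phi^2\ \geq\ \tfrac12\int_\Sigma\phi^2 .
\]
If $\Sigma$ is closed, the first eigenfunction of $-\Delta_\Sigma+K_\Sigma-\tfrac12$ is positive. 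If $\Sigma$ is complete noncompact, Fischer-Colbrie--Schoen \cite{Fischer-Colbrie-Schoen-The-structure-of-complete-stable} gives $v>0$ with $-\Delta_\Sigma v+K_\Sigma v\ge \tfrac12 v$. In either case $\Sigma$ has uniformly positive spectral $1$-Ricci curvature, since $\Ric_\Sigma=K_\Sigma$ in dimension two.

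I would then apply the Shen--Ye/Antonelli--Xu diameter estimate (\cref{Diameter estimate of Shen-ye}) with $n=2$, $\gamma=1<4=\frac4{n-1}$ and $(n-1)\lambda=\tfrac12$. The bound is
\[
\sqrt{1+\tfrac{1}{4-1}}\cdot\frac{\pi}{\sqrt{1/2}}=\frac{2}{\sqrt3}\sqrt2\,\pi=\frac{2\sqrt6\pi}{3}.
\]
The inequality is strict because $n\neq3$. In particular a complete $\Sigma$ is automatically compact. The case $n=2$ is not literally covered by the statement as quoted (which asks $n>3$ or $n=3$). So I would either check that the Shen--Ye argument (second variation of a weighted length along a minimizing geodesic) goes through verbatim for $n=2$, or cite \cite{shenyingyerugang} directly for that case.

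\emph{Sharpness.} I would reverse-engineer this reduction using \cref{lift C_m-1 to C_m} with $k=0$ and $m=2$. Suppose $(\Sigma,g)\cong\mathbb S^2$ carries $u>0$ with $-\Delta_\Sigma u+K_\Sigma u\ge\tfrac12u$. Then $M=\Sigma\times\mathbb S^1$ with $g_M=g+u^2d\theta^2$ has
\[
R_M=2\bigl(K_\Sigma-u^{-1}\Delta_\Sigma u\bigr)\ge1,
\]
and $\Sigma\times\{\theta\}$ is totally geodesic and stable (the proposition with $\alpha=0$). So it suffices to produce rotationally symmetric metrics $g_k=dr^2+f_k(r)^2d\phi^2$ on $\mathbb S^2$, with $r\in[0,L_k]$, and radial $u_k$ satisfying the inequality, such that $L_k\to\frac{2\sqrt6\pi}{3}$. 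The radial inequality reads
\[
-u''-\frac{f'}{f}u'-\frac{f''}{f}u\ \ge\ \tfrac12u .
\]

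To construct these, I would follow the ODE ansatz of Section \ref{section4}, i.e. the Shen--Ye equality case in dimension two. Take $f=u^{-a}$ times a cutoff profile, choose the exponent $a$ so that the inequality becomes a first-order Riccati equation for $h=(\log u)'$ (as in the system solving \eqref{bottom spectral ode}), and solve it explicitly with trigonometric functions. That solution has length exactly $\frac{2\sqrt6\pi}{3}$ but degenerates at the endpoints.

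\emph{Main obstacle.} The hardest step is closing up smoothly at the two poles. Near each pole I need $f(r)\sim r$ and $u'(0)=0$ while keeping the strict inequality, at the cost of only an $o(1)$ loss in length. I would first try gluing small round caps of slightly larger curvature to a truncation of the extremal profile at $r=\delta_k$ and $r=L-\delta_k$. The glued pair $(f,u)$ would be smoothed in a way that preserves the differential inequality, perhaps after replacing $\tfrac12$ by $\tfrac12+\eta_k$ on the interior piece and rescaling. Letting $\delta_k,\eta_k\to0$ should give diameters converging to the bound.
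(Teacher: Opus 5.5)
The paper does not prove this statement. It quotes Schoen--Yau for the non-strict bound and Hu--Xu--Zhang for strictness and sharpness, and the remark after it outlines the same two-step route you take.

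Your upper-bound half is sound. Your constant $\tfrac12$ in $\int_\Sigma|\nabla\phi|^2+K_\Sigma\phi^2\ge\tfrac12\int_\Sigma\phi^2$ is the correct one; the paper's remark writes $\lambda_1(-\Delta_\Sigma+K_\Sigma)\ge 1$, which would not give $\frac{2\sqrt6\pi}{3}$. Since \cref{Diameter estimate of Shen-ye} is not stated for $n=2$, you cannot simply quote ``strict because $n\neq 3$''. In dimension two it is a short direct check:
\begin{enumerate}
\item Take a minimizer $\sigma$ of $\int_\sigma u\,ds$ between two points. The weighted second variation with $k=1$, together with $\Delta u\le (K-\tfrac12)u$, gives $\int_0^\ell \phi'^2-\tfrac12\phi^2-\tfrac34U^2\phi^2+U\phi\phi'\ge 0$, where $U=(\log u)'$.
\item Hence $\tfrac43\int_0^\ell\phi'^2\ge\tfrac12\int_0^\ell\phi^2$, so $\ell\le\frac{2\sqrt6\pi}{3}$.
\item Equality would force $u\propto\sin^{2/3}(\pi s/\ell)$ along $\sigma$, which vanishes at the endpoints.
\end{enumerate}
For noncompact $\Sigma$ you still need the usual exhaustion argument, since a weighted minimizer between far-apart points need not exist a priori.

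The genuine gap is the sharpness half. It is the whole content of the Hu--Xu--Zhang part, and you leave it as a plan.

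First, the ansatz is not pinned down, and the pointer to the system solving \eqref{bottom spectral ode} is misleading. At $(n,m)=(3,2)$ that system gives $f=u^{-1/2}$ and $h'=-\lambda-3h^2$. The resulting interval has length only $\frac{\sqrt6\pi}{3}$, and $f\to\infty$ at its ends. For $f=u^{-a}$ the Riccati solution lives on an interval of length $\pi|1-a|/\sqrt{\lambda(1-a+a^2)}$. With $\lambda=\tfrac12$ this equals $\frac{2\sqrt6\pi}{3}$ only when $a=-1$. So the extremal is $f=u=\cos^{2/3}(\omega r)$ with $\omega=\sqrt{3/8}$, i.e.\ Gromov's torical-band profile.

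Second, the closing at the poles is asserted rather than checked, and the naive version fails. Write $y=(\log fu)'$ and $V=(\log(f/u))'$. The radial inequality becomes $-y'\ge\tfrac12+\tfrac34y^2+\tfrac14V^2$, with no derivative of $V$. Consequently, at a junction $f$ and $u$ need only be continuous, each may be rescaled independently on the middle piece, and a corner is admissible exactly when $y$ jumps down.
\begin{enumerate}
\item If you replace the piece of length $\delta$ at a tip by a round cap of the same radius, then $y_{\mathrm{ext}}\approx\frac{4}{3\delta}>\frac1\delta\approx y_{\mathrm{cap}}$ at the junction. The glued pair then violates the inequality distributionally.
\item What works is a cap of curvature $\kappa>\tfrac12$, radius $\delta$ and constant $u$, attached to the extremal profile truncated at distance $\delta'$ from its tip, with $\tfrac43\omega\cot(\omega\delta')\le\sqrt\kappa\cot(\sqrt\kappa\,\delta)$. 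For instance $\delta'=2\delta$ works, and it costs only $O(\delta)$ in length.
\item Mollifying $(y,V)$ then preserves the inequality, because the right-hand side is convex.
\item Your choice $\lambda_k\downarrow\tfrac12$ on the middle piece supplies the slack needed for the cutoff.
\item Building one half and reflecting closes both poles smoothly.
\end{enumerate}
Without an argument of this kind, the spheres $(\mathbb S^2,g_k)$, and hence sharpness, are not established.
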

The inequality ($\leq\frac{2\sqrt{6}\pi}{3} $) was proved by Schoen-Yau \cite{Schoen-Yau-black-hole} while the asymptotical sharpness was proved by Hu-Xu-Zhang \cite{xuguoyi-sharp-diameter-esitmate}. In fact, they showed that there is a sequence of complete Riemannian manifolds $\Sigma_k=(\mathbb{S}^2,\bar{g}_k)$ with $\lambda_1(-\Delta_{\Sigma_k}+\beta K_{\Sigma_k})\geq \lambda$ for $\beta>\frac{1}{4}$ and $\lambda>0.$ Note that any stable minimal surface $\Sigma$ in $(M^3,g)$ with $R_g\geq 1$ satisfies $\lambda_1(-\Delta_{\Sigma}+ K_{\Sigma})\geq 1$.

\vskip.2cm
Our goal is to obtain a higher dimensional result by using proposition \ref{lift C_m-1 to C_m}.

\begin{definition}
Let $(M,g)$ be an $n$-dimensional Riemannian manifold with nontrivial homology group $H_k(M)$. The $k$-diameter of $(M,g)$ is defined as
\begin{equation*}
    \operatorname{diam}_k(M,g) := \inf \left\{ \operatorname{diam}_g(\Sigma) \;\middle|\;
\begin{aligned}
&\Sigma \subset M \text{ is a smooth submanifold}, \\
&[\Sigma] \neq 0 \text{ in } H_k(M)
\end{aligned} \right\},
\end{equation*}
where $\operatorname{diam}_g(\Sigma)$ denotes the diameter of $\Sigma$ with respect to the induced metric.
\end{definition}
\begin{theorem}\label{diameter inequality in Cm case}
    Assume either \(3 \leq  n \leq  5\), \(1 \leq  m \leq  n-1\) or \(6 \leq  n \leq  7\), \(m\in \{1,n-2,n-1\}\). Suppose $(N^{n},g)$ is a closed Riemannian n-manifold with positive $m$-intermediate curvature $C_{m}\geq \lambda>0$. Suppose there exists a proper smooth map $f:N\rightarrow M^{n-m+1}\times \mathbb{T}^{m-1}$ with nonzero degree. Then
    $$\operatorname{diam}_{n-m+1}(N,g)\leq  \frac{\pi}{\sqrt{\lambda C_{0}}}$$ 
    where
\[
C_{0}=\frac{m^2-mn+m+n}{2(m^2-nm+2n-2)}>0.
\] 
In particular,  when $m\geq 2$, the inequality is strict unless $n=m+2$.
\end{theorem}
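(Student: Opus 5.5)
We plan to build the same chain of weighted minimizers as in the proof of Theorem \ref{a new proof of brendle's result}, and then read off a diameter bound on the last slice. We take the convention that $\mathcal{A}_k(u)$-minimization is used at each step, and we keep track of the uniform positivity constant $\lambda$ along the way.

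\textbf{Step 1 (top slice).} Pull back the $m-1$ circle classes $d\theta_i$ of $\mathbb{T}^{m-1}$ under $f$. Minimize area in the class Poincaré dual to $f^*d\theta_1$ to get a closed, two-sided minimizer $\Sigma_1^{n-1}$. The composition $\Sigma_1\to M\times\mathbb{T}^{m-2}$ has nonzero degree. By Theorem \ref{spectral inheritance} and the remark following it (uniformly positive case, with $k=0$), $\Sigma_1$ carries a positive $u_1$ satisfying
\[-\Delta_{\Sigma_1}u_1+C^{\Sigma_1}_{m-1}u_1\ge \lambda u_1 .\]
The constant $\lambda$ survives because $\phi=u^{k/2}\psi$ absorbs the weight exactly. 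More precisely, we get a first eigenvalue of at least $\lambda$, not merely a positive one.

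\textbf{Step 2 (iteration).} For $\ell=1,\dots,m-2$, minimize $\mathcal{A}_{2\ell/(\ell+1)}(u_\ell)$ in the next dual class. This step uses \eqref{D-inequality} to verify the hypothesis $D\ge\frac{k-1}{k}$ at each stage. The output is a closed $(n-m+1)$-manifold $\Sigma:=\Sigma_{m-1}$ with $[\Sigma]\ne0$ in $H_{n-m+1}(N)$, since it maps with nonzero degree to $M$. It carries a positive $u$ with
\[-\tfrac{2m-2}{m}\Delta_\Sigma u+\Ric_\Sigma u\ge \lambda u .\]
We must check at each step that the eigenvalue lower bound $\lambda$ persists. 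This requires rereading the proof of Theorem \ref{spectral inheritance}: the pointwise inequality $-k\Delta u+C_m u\ge\lambda u$ feeds in additively through $ku^{-1}\Delta_N u\le C_m-\lambda$.

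\textbf{Step 3 (diameter of the last slice).} Apply Theorem \ref{Diameter estimate of Shen-ye} with $\gamma=\frac{2m-2}{m}$ and dimension $d=n-m+1$. The hypothesis $\gamma<\frac{4}{d-1}=\frac{4}{n-m}$ is exactly $m^2-mn+m+n>0$. Writing $\lambda=(d-1)\lambda'$, the theorem gives
\[\operatorname{diam}(\Sigma)\le\sqrt{d-1+\tfrac{(d-3)^2}{4/\gamma-d+1}}\;\frac{\pi}{\sqrt{\lambda}} .\]
It then remains to verify the algebraic identity that the square of the prefactor equals $1/C_0$, namely
\[(n-m)+\frac{(n-m-2)^2}{\frac{2m}{m-1}-(n-m)}=\frac{2(m^2-mn+2n-2)}{m^2-mn+m+n}.\]
This is a routine but essential simplification. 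Since $\operatorname{diam}_{n-m+1}(N)\le\operatorname{diam}(\Sigma)$, the estimate follows.

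Strictness for $m\ge2$ comes from the strictness clause in Theorem \ref{Diameter estimate of Shen-ye}, which holds unless $d=3$, i.e. $n=m+2$. When $m=1$ we have $\gamma=0$, and the bound is Bonnet–Myers applied directly to $N$, since $\Sigma=N$ and $C_0=\frac{1}{n-1}$.

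The main obstacle is bookkeeping rather than conception. First, the uniform constant $\lambda$ must propagate unchanged through every application of Theorem \ref{spectral inheritance}. Second, the algebra must produce exactly $C_0$; if the identity fails, the constant would need to be corrected.
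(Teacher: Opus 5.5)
Your proposal is correct and is essentially the paper's proof: the same chain of $\mathcal{A}_{2\ell/(\ell+1)}$-minimizers produced via Theorem~\ref{spectral inheritance}, then Theorem~\ref{Diameter estimate of Shen-ye} applied to the closed $(n-m+1)$-dimensional slice (which is nonzero in homology), with Bonnet--Myers handling $m=1$. Your bookkeeping also fills in two points the paper leaves implicit. The exact constant $\lambda$ does propagate through each step via $ku^{-1}\Delta_N u\le C_m-\lambda$. The identity you flagged does hold: with $a=n-m$ one has $m^2-mn+m+n=a+2m-am$, and the numerator $a(a+2m-am)+(m-1)(a-2)^2$ simplifies to $2(m^2-mn+2n-2)$, so the prefactor squared is exactly $1/C_0$.
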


This recovers the case $m=2$ and $3\leq n\leq 5$, proved by Shen–Ye \cite{shenyingyerugang} (see Theorem \ref{Diameter estimate of Shen-ye}). The same upper bound was also obtained by Wu \cite[Theorem 1.5]{wuyujie-diameter-estimate} using weighted minimal slicings from Brendle–Hirsch–Johne \cite{brendlegeroch'sconjecture}. We present a direct proof here using Theorem \ref{spectral inheritance}.
\begin{proof}
    The dimension $n\leq 7$ ensures the regularity of minimizers. The case $m=1$ follows from Bonnet-Meyer's theorem. By assumption, we know $N$ has positive $(0,m)$-intermediate curvature, i.e., $C_m^N\geq\lambda.$ First we find an area minimizer $\Sigma^{n-1}$ (closed manifold) in the homology class $f^{*}[M^{n-m+1}\times \mathbb{T}^{m-2}\times \{p\}]$. By Theorem \ref{spectral inheritance}, $\Sigma^{n-1}$ admits positive spectral $(1,m-1)$-intermediate curvature. We then continue to find minimizers of $\mathcal{A}_k$ and apply Theorem \ref{spectral inheritance} by $m-2$ more times. In the end, we can construct a $(n-m+1)$-dimensional closed manifold $\Sigma^{n-m+1}$ which admits positive spectral $\frac{2m-2}{m}$-Ricci curvature.  It is not hard to check that 
    \begin{equation}\label{condition 1}
        \frac{2m-2}{m}< \frac{4}{n-m}.
    \end{equation}
    Then using Theorem \ref{Diameter estimate of Shen-ye}, we obtain 
    \[
    \operatorname{diam}(\Sigma^{n-m+1})\leq \frac{\pi}{\sqrt{\lambda C_{0}}}.
    \]
    Note that $[\Sigma^{n-m+1}]=[N]\frown (f^*(d\theta_{1})\smile\cdots \smile f^{*}(d\theta_{m-1}))\neq 0$. Then
    \[
    \operatorname{diam}_{n-m+1}(N,g)\leq \frac{\pi}{\sqrt{\lambda C_{0}}}.
    \]
    The proof is completed.
\end{proof}

In the following, we study the sharpness of the inequality in Theorem \ref{diameter inequality in Cm case}.

\begin{theorem}\label{example sharp diameter of C_m}
Let $3\leq n\leq 7$ and $m\in\{n-1,n-2\}$. There exists a sequence of smooth complete $n$-dimensional closed Riemannian manifold $N^{k}$ with positive $m$-intermediate curvature $C_{m}\geq \lambda>0$ and a sequence of smooth $(n-m+1)$-dimensional submanifold $\Sigma^{k}\subset H_{n-m+1}(N^{k};\mathbb{Z})$ satisfying $$\operatorname{diam}(\Sigma^{k})\rightarrow \frac{\pi}{\sqrt{\lambda C_{0}}}$$ increasingly as $k\rightarrow \infty$, where
\[
C_{0}=\frac{m^2-mn+m+n}{2(m^2-nm+2n-2)}>0.
\]
\end{theorem}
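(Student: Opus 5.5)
We will build the examples by \emph{reverse engineering} the slicing in the proof of Theorem \ref{diameter inequality in Cm case}, exactly as in the construction behind Theorem \ref{counterexample of splitting}. We start from the bottom slice, a closed $(n-m+1)$-manifold $\Sigma^k$ carrying nonnegative-to-positive spectral Ricci curvature with the extremal constant. We then lift it to $N^k$ by repeatedly applying Proposition \ref{lift C_m-1 to C_m}.

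\textbf{Step 1: the bottom slice.} Set $d=n-m\in\{1,2\}$, so $\Sigma$ has dimension $d+1\in\{2,3\}$, and set $\gamma=\frac{2m-2}{m}<\frac{4}{d}$ (this is the condition \eqref{condition 1}). On $\Sigma^k=\mathbb{S}^d\times\mathbb{S}^1$ or on the sphere $\mathbb{S}^{d+1}$, we construct rotationally symmetric metrics $dr^2+f_k(r)^2\bar g$ on an interval $r\in[0,L_k]$, together with positive functions $u_k(r)$. We require
\[
-\gamma\Delta u_k+\Ric_{\Sigma^k}u_k\geq d\,\lambda' u_k,
\]
with $\lambda'$ normalized so that the Shen--Ye bound of Theorem \ref{Diameter estimate of Shen-ye} equals $\pi/\sqrt{\lambda C_0}$, and with $L_k\nearrow \pi/\sqrt{\lambda C_0}$.

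The model is the Hu--Xu--Zhang / Shen--Ye extremal profile. We solve the ODE system analogous to the one preceding the proof of Theorem \ref{counterexample of splitting}, now with positive $\lambda$, so that the solution is of $\cos$-type rather than $\cosh$-type. The functions $f$ and $u$ are then powers of $\cos(\sqrt{C\lambda}\,r)$, which degenerate exactly at distance $\pi/\sqrt{\lambda C_0}$. We truncate slightly inside the singular endpoints and cap off smoothly, losing only a small amount of $\lambda$. We also choose the normalization so that $C_1^{\Sigma}$ is attained in the $\partial_r$ direction; small $\epsilon$ in the fibre, as before, handles the case $d=2$.

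\textbf{Step 2: lifting.} We set
\[
N^k=\Sigma^k\times\mathbb{T}^{m-1},\qquad g=g_{\Sigma^k}+u_k^{4/m}\sum_i dx_i^2.
\]
Iterating Proposition \ref{lift C_m-1 to C_m} $m-1$ times gives $C_m(\partial_r,e_{x_1},\dots,e_{x_{m-1}})\geq\lambda$. We then need $C_m^N=C_m(\partial_r,e_{x_i})$. Since $n-m\le2$, the torus and $r$-directions exhaust all but at most two directions. So we redo the frame decomposition from the proof of Theorem \ref{counterexample of splitting}: the sphere-direction terms contribute positive curvature of order $\epsilon^{-2}f^{-2}$, which dominates the bounded error terms. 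Here $n-m\ge1$ requires care; for $d=1$ we use the $\mathbb{S}^1\times\mathbb{S}^1$-like case, handled directly by the ODE choice.

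\textbf{Step 3: the diameter.} We show $\Sigma^k\times\{pt\}$ represents a nonzero class in $H_{n-m+1}(N^k)$; this is clear for a product. By construction its diameter is at least the $r$-length $L_k$, which increases to the bound.

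\textbf{Main obstacle.} The hardest step is Step 1: producing closed, smooth capped metrics with the uniform spectral bound while the $r$-length tends to the sharp constant. Near the caps, where $f\to0$, the $\cos$-profile must be glued so that the spectral inequality persists. The intermediate-curvature minimality check in Step 2 must also survive near the caps. I would try a small-$\delta$ cap in the style of Hu--Xu--Zhang and verify the inequalities by continuity.
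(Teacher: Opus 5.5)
Your lifting step (iterating Proposition \ref{lift C_m-1 to C_m} with warping $u_k^{4/m}$ on $\mathbb{T}^{m-1}$) and your Step 3 are what the paper does for $m=n-1$. The gap is Step 1, which you leave open and whose recipe fails as stated. The bottom slice you need is exactly the Hu--Xu--Zhang family behind Theorem \ref{xuguoyi}: spheres $(\mathbb{S}^2,dr^2+f_k^2d\theta^2)$ with $\lambda_1(-\Delta+\beta K)\ge\lambda$ for a fixed $\beta>\frac14$, collapsing to a segment of the sharp Shen--Ye length. Taking $\beta=\frac{n-1}{2n-4}$ gives, after rescaling $\lambda$, $\lambda_1(-\frac{2n-4}{n-1}\Delta+K)\ge\lambda$ with $\operatorname{diam}\to\pi\sqrt{2(n-1)/(n\lambda)}=\pi/\sqrt{\lambda C_0}$; the paper simply cites this.

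Taken literally, the ODE system preceding the proof of Theorem \ref{counterexample of splitting} does not produce this family. Its coupling $\frac{2m-2}{m}(\log u)'=\frac{2}{2-n+m}h$ is the Shen--Ye extremal one only when $\frac{2m-2}{m}=\frac{4}{n-m}$. For $n-m=1$ it makes $f$ a negative power of $\cos$, so $f$ blows up at the ends instead of closing up. The interval is also shorter than the Shen--Ye length by the factor $\sqrt{\gamma}/2$, where $\gamma=\frac{2m-2}{m}$. The extremal profile for $-\gamma\Delta u+Ku=\lambda u$ is instead $u=\cos^{2/(4-\gamma)}\big(\tfrac12\sqrt{(4-\gamma)\lambda}\,r\big)$, $f=u^{2-\gamma}$, which is singular at both ends. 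Smoothing those ends while losing only $o(1)$ in $\lambda$ is the actual content of Hu--Xu--Zhang, not a continuity check.

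Your Step 2 worry is empty here. Since $2C_{n-1}(e_1,\dots,e_{n-1})=R$ for every orthonormal frame, $C_{n-1}(\partial_r,e_{x_1},\dots,e_{x_{n-2}})\ge\lambda$ already gives $C_{n-1}^N\ge\lambda$. The $\epsilon^{-2}f^{-2}$ domination you invoke is not available anyway, because an $\mathbb{S}^1$ fibre has no intrinsic curvature.

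For $m=n-2$ your route is unnecessary and, as described, broken. The ODE system has $n-m-2$ in its denominators. A small-$\epsilon$ $\mathbb{S}^2$-fibre cannot be closed up smoothly to $\mathbb{S}^3$: smooth poles force $(\epsilon f)'=\pm1$ there, and then the fibre curvature no longer dominates.

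The missing observation is that a $3$-dimensional bottom slice attains the bound with $u$ constant. Here $C_0=\frac12$. With $\rho=\sqrt{2/\lambda}$, the product $\mathbb{S}^3(\rho)\times\mathbb{T}^{n-3}$ satisfies, for any orthonormal frame $X_1,\dots,X_n$, $C_{n-2}(X_1,\dots,X_{n-2})=\frac R2-\operatorname{Rm}(X_{n-1},X_n,X_{n-1},X_n)\ge\frac{3}{\rho^2}-\frac{1}{\rho^2}=\lambda$. Its diameter is $\operatorname{diam}\mathbb{S}^3(\rho)=\pi\rho=\pi/\sqrt{\lambda C_0}$, and this constant sequence is the paper's example.

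Finally, drop the option $\Sigma^k=\mathbb{S}^d\times\mathbb{S}^1$. For $d=1$, testing $\lambda_1(-\gamma\Delta+K)>0$ with constants gives $\int K>0$, which excludes the torus. For $d=2$, Theorem \ref{diameter bound under spectral Ric} forces finite $\pi_1$ because $\gamma<2$.
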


\begin{remark}
     For $m=n-2$, the theorem is trivial since  $\mathbb{S}^{3}(\sqrt{\frac{2}{\lambda}})\times \mathbb{T}^{n-3}$ with canonical metric achieves the upper bound of diameter. We only need to prove the case $m=n-1.$
\end{remark}

\begin{proof}
    Using Theorem \ref{xuguoyi}, we can construct a sequence of spheres $(\Sigma_{k},g_{k})=(\mathbb{S}^{2},dr^2+ f_{k}^{2}(r)\bar{g}_{\mathbb{S}^{1}})$ with uniformly positive spectral $(\frac{2n-4}{n-1},1)$-intermediate curvature (i.e. $\lambda_{1}(-\frac{2n-4}{n-1}\Delta_{\mathbb{S}^2}+K)\geq \lambda$) and $\operatorname{diam}(\Sigma_{k},g_{k})\rightarrow \frac{\pi}{\sqrt{\lambda C_{0}}}$ as $k\rightarrow \infty$. This sequence of spheres geometrically converges to a segment. Then we consider the manifold $N=\Sigma_{k}\times \mathbb{T}^{n-2}$ with metric $$g_{k}=dr^2+f^{2}_{k}(r)\bar{g}_{\mathbb{S}^1}+u_{k}^{\frac{4}{n-1}}(r)dx_{1}^{2}+\cdots +u_{k}^{\frac{4}{n-1}}(r)dx_{n-2}^2.$$
Then using Proposition \ref{lift C_m-1 to C_m}, we get
\[
R^{M_{k}}=2C_{n-1}^{M_{k}}(\partial_{r},e_{x_{1}},...,e_{x_{n-2}})\geq 2\lambda. 
\]     
Apparently, the slice $\{x_{1}=p_{1},\cdots,x_{n-2}=p_{n-2}\}\subset N^k$ is nontrivial in $H_{2}(N^k;\mathbb{Z})$.
\end{proof}

It remains an interesting question whether Theorem \ref{example sharp diameter of C_m} holds for other choices of $m\geq 2.$

\bibliographystyle{alpha}

\bibliography{reference}

\end{document}